\def\comment#1{{\sf{[#1]}}}
\def\Z{{\mathbb Z}}
\def\Q{{\mathbb Q}}
\def\R{{\mathbb R}}
\def\C{{\mathbb C}}
\def\bA{{\mathbb A}}
\def\F{{\mathbb F}}
\def\H{{\mathbb H}}
\def\I{{\mathbb I}}
\def\bO{{\mathbb O}}
\def\V{{\mathbb V}}
\def\A{{\mathcal A}}
\def\cD{{\mathcal D}}
\def\M{{\mathcal M}}
\def\cO{{\mathcal O}}
\def\U{{\mathcal U}}
\def\cC{{\mathcal C}}
\def\cG{{\mathcal G}}
\def\cH{{\mathcal H}}
\def\cR{{\mathcal R}}
\def\X{{\mathcal X}}
\def\g{{\mathfrak g}}
\def\h{{\mathfrak h}}
\def\m{{\mathfrak m}}
\def\r{{\mathfrak r}}
\def\u{{\mathfrak u}}
\def\B{\mathfrak B}
\def\e{{\epsilon}}
\def\w{{\omega}}
\def\D{{\Delta}}
\def\G{{\Gamma}}
\def\Ghat{{\widehat{G}}}
\def\Gbar{{\overline{G}}}
\def\Xbar{{\overline{X}}}
\def\Mbar{\overline{\M}}
\def\vv{{\vec{v}}}
\def\aa{{\mathbf a}}
\def\bb{{\mathbf b}}
\def\be{{\mathbf e}}
\def\bZ{{\mathbf Z}}
\def\bG{{\mathbf G}}
\def\bw{\mathbf{w}}
\def\br{{\sf r}}
\def\bB{{A}}
\def\Vec{{\mathsf{Vec}}}
\def\Rep{{\mathsf{Rep}}}
\def\MHS{{\mathsf{MHS}}}
\def\VMHS{{\mathsf{VMHS}}}
\def\HRep{{\sf HRep}}
\def\sl{\mathfrak{sl}}
\def\SL{{\mathrm{SL}}}
\def\Ga{{\mathbb{G}_a}}
\def\betti{{\mathrm{B}}}
\def\DR{{\mathrm{DR}}}
\def\cusp{\mathrm{cusp}}
\def\eis{{\mathrm{eis}}}
\def\op{\mathrm{op}}
\def\fin{\mathrm{fin}}
\def\Fr{\mathcal{F}}
\def\Frbar{{\overline{\Fr}}}
\def\an{\mathrm{an}}
\def\cts{\mathrm{cts}}
\def\fte{\mathrm{f}}
\def\cone{\mathrm{cone}}
\def\dot{{\bullet}}
\def\blank{\phantom{x}}
\def\fss{{/\!/}}
\newcommand\im{\operatorname{im}} 
\newcommand\id{\operatorname{id}}
\newcommand\Spec{\operatorname{Spec}}
\newcommand\Hom{\operatorname{Hom}}
\newcommand\Ext{\operatorname{Ext}}
\newcommand\End{\operatorname{End}}
\newcommand\Aut{\operatorname{Aut}}
\newcommand\Gr{\operatorname{Gr}}
\newcommand\Isom{\operatorname{Isom}}
\newcommand\Mor{\operatorname{Mor}}
\newcommand\Dec{\operatorname{Dec}}
\newcommand\tot{\operatorname{tot}}
\renewcommand\Im{\operatorname{Im}}
\numberwithin{equation}{section}
\newtheorem{theorem}{Theorem}[section]
\newtheorem{lemma}[theorem]{Lemma}
\newtheorem{proposition}[theorem]{Proposition}
\newtheorem{corollary}[theorem]{Corollary}
\newtheorem{bigtheorem}{Theorem}
\newtheorem{bigcorollary}[bigtheorem]{Corollary}
\theoremstyle{definition}
\newtheorem{definition}[theorem]{Definition}
\theoremstyle{remark}
\newtheorem{remark}[theorem]{Remark}
\begin{document}

\title{Deligne--Beilinson Cohomology of Affine Groups}

\dedicatory{To Steven Zucker on the occasion of his 65th birthday}

\author{Richard Hain}
\address{Department of Mathematics\\ Duke University\\
Durham, NC 27708-0320}
\email{hain@math.duke.edu}

\thanks{Supported in part by grant DMS-1406420 from the National Science
Foundation and by the Friends of the Institute for Advance Study.}

\date{\today}




\maketitle

\tableofcontents

\section{Introduction}

The purpose of these notes is to develop the basic theory of Deligne--Beilinson
cohomology of affine group schemes that are endowed with a mixed Hodge
structure. These results are needed in the works \cite{hain:modular} on the
Hodge theory of modular groups and \cite{mem} on universal mixed elliptic
motives. The results here generalize results in the unipotent case, which were
established in \cite{carlson-hain}. One motivation for this work is to interpret
Brown's computation \cite{brown:mmv} of the periods of iterated iterated
integrals of Eisenstein series in terms of Deligne cohomology.

This work is built on several fundamental results proved by Zucker in the
1980s---most notably, his construction in \cite{zucker} of a mixed Hodge complex
for computing the mixed Hodge structure (MHS) on the cohomology of an affine
curve with coefficients in a polarized variation of MHS, and his paper
\cite{steenbrink-zucker} with Joseph Steenbrink in which they define admissible
variations of MHS over a curve. These foundational works were generalized from
curves to higher dimensional varieties by Saito \cite{saito:mhm} and Kaswhiwara
\cite{kashiwara}.


By an {\em affine group}, we mean an affine group scheme over a field of
characteristic zero. The ring of functions on an affine group $G$ will be
denoted by $\cO(G)$. Such a group is {\em algebraic} if $\cO(G)$ is finitely
generated. Every affine group over the field $\F$ is a proalgebraic $\F$-group;
that is, it is an inverse limit of linear affine algebraic $\F$-groups,
\cite[p.~24]{waterhouse}.

The theory of cohomology of algebraic groups was developed by Hochschild
\cite{hochschild} and Hochschild--Serre \cite{hochschild-serre}. It generalizes
to affine groups \cite{levi}. A modern exposition can be found in the early
chapters of Jantzen's book \cite{jantzen}, which we use as our basic reference.

Denote the category of representations of an affine $\F$-group $G$ by $\Rep(G)$.
We refer to objects of $\Rep(G)$ as $G$-modules.  The most direct definition of
the cohomology of an affine group $G$ with coefficients in the $G$-module $V$ is
$$
H^\dot(G,V) := \Ext^\dot_{\Rep(G)}(\F,V),
$$
where the right hand side denotes the group of Yoneda extensions of the trivial
$G$-module $\F$ by $V$ in $\Rep(G)$.

Suppose now that $\F$ is a subfield of $\R$. A MHS on an affine $\F$-group $G$
is an ind-MHS on its coordinate ring $\cO(G)$ that is respected by its Hopf
algebra operations. A Hodge representation of the affine group $G$ with an
$\F$-MHS is an $\F$-MHS $V$ whose underlying vector space is a $G$-module with
the property that the coaction
$$
V \to V \otimes \cO(G)
$$
is a morphism of MHS. Denote the category of Hodge representations of $G$ by
$\HRep(G)$. It is an abelian category. The cohomology $H^\dot(G,V)$ groups of
$G$ with coefficients in a Hodge representation have a natural ind-MHS. (Cf.\
Proposition~\ref{prop:coho_mhs}.)

Suppose that $G$ is an affine $\F$-group with a MHS. The Deligne--Beilinson
cohomology of $G$ with coefficients in the Hodge representation $V$, defined in
Section~\ref{sec:mhs}, is isomorphic to the Yoneda ext group
$$
H_\cD^\dot(G,V) := \Ext^\dot_{\HRep(G)}(\F,V),
$$
where $\F$ denotes the Hodge structure $\F(0)$, and sits in  an exact sequence
$$
0 \to \Ext^1_\MHS(\F,H^{m-1}(G,V)) \to H^m_\cD(G,V) \to \G H^m(G,V) \to 0
$$
of vector spaces, where $\G = \Hom_\MHS(\F,\blank)$.

Affine groups with MHS and their Hodge representations arise as follows. Suppose
that $X$ is a smooth complex algebraic variety and that $\H$ is a polarized
variation of $\F$-Hodge structures over $X$. Fix a base point $x\in X$. Denote
the fiber of $\H$ over $x$ by $H_x$. The Zariski closure of the image of the
monodromy representation
$$
\rho_x : \pi_1(X,x) \to \Aut(H_x)
$$
is a reductive $\F$-group, which we denote by $R_x$.
The completion of $\pi_1(X,x)$ relative to $\rho_x$ is a proalgebraic (and thus
an affine) $\F$-group $\cG_x$. It has a natural MHS (\cite{hain:malcev}).

Denote by $\MHS(X,\H)$ the category of admissible variations $\V$ of MHS over
$X$ with the property that the monodromy action of $\pi_1(X,x)$ on each weight
graded quotient $\Gr^W_m V_x$ of its fiber over $x$ factors through an action of
$R_x$ on $\Gr^W_m V_x$. Restricting to the fiber over the base point defines an
equivalence of categories
$$
\MHS(X,\H) \to \HRep(\cG_x).
$$
This statement generalizes the main result of \cite{hain-zucker} of the author
and Zucker.\footnote{A proof of the more general statement for affine curves is
sketched in \cite{hain:modular}. A detailed proof of the general case will be
given in \cite{vmhs}.} Combined with the result above, this result establishes
that there is an isomorphism
$$
H^\dot_\cD(\cG_x,V) \cong \Ext^\dot_{\MHS(X,\H)}(\F,\V)
$$
that is compatible with all natural products.

In Section~\ref{sec:nat_homom}, we prove that for all $\V$ in $\MHS(X,\H)$,
the natural homomorphism
$$
H^\dot(\cG_x,V_x) \to H^\dot(X,\V)
$$
is a morphism of MHS, where the right-hand group has Saito's MHS,
\cite{saito:mhc}. In Section~\ref{sec:db_coho}, we lift this to a natural map
$$
\theta : H^\dot_\cD(\cG_x,V_x) \to H^\dot_\cD(X,\V)
$$
from the Deligne--Beilinson cohomology of $\cG_x$ to the Deligne--Beilinson
cohomology of $X$ and show that it is an isomorphism in degrees $0,1$ and
injective in degree 2.

Denote the category of all admissible variations of $\F$-MHS over $X$ by
$\MHS_\F(X)$. The following is a combination of Theorem~\ref{thm:big_thm} and
its corollary.

\begin{bigtheorem}
\label{thm:big}
If $X$ is a quasi-projective manifold and $\V$ is an admissible variation of
$\F$-MHS over $X$, there there is a homomorphism
$$
\Ext^\dot_{\MHS(X)}(\F,\V) \to H^\dot_\cD(X,\V)
$$
that is compatible with products. It is an isomorphism in degrees $\le 1$ and
injective in degree $2$ for all $X$, and an isomorphism in all degrees when $X$
is an affine curve.
\end{bigtheorem}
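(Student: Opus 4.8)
The plan is to reduce the assertion to the Deligne--Beilinson cohomology of the relative completion $\cG_x$, where all of the needed comparisons have already been assembled. To construct the map, note first that an admissible variation $\V$ of $\F$-MHS over $X$ has weight graded quotients $\Gr^W_m\V$ that are polarized variations of Hodge structure; choosing a polarized variation $\H$ whose monodromy group dominates those of all the $\Gr^W_m\V$ (for example a polarization of $\bigoplus_m\Gr^W_m\V$) forces the monodromy on each $\Gr^W_m V_x$ to factor through $R_x$, so that $\V$ lies in $\MHS(X,\H)$. For such an $\H$ the equivalence $\MHS(X,\H)\simeq\HRep(\cG_x)$ and the isomorphism $\Ext^\dot_{\MHS(X,\H)}(\F,\V)\cong H^\dot_\cD(\cG_x,V_x)$ identify the relative-completion Ext group with the Deligne--Beilinson cohomology of $\cG_x$, and composing with the map $\theta$ of Section~\ref{sec:db_coho} yields a homomorphism to $H^\dot_\cD(X,\V)$. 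Product compatibility is then automatic, since both this identification and $\theta$ respect products.

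Before this is genuinely a map out of $\Ext^\dot_{\MHS(X)}(\F,\V)$, I must compare the full category $\MHS(X)$ with the subcategory $\MHS(X,\H)$; I expect this to be the main obstacle. The plan is to verify that $\MHS(X,\H)$ is an extension-closed abelian subcategory of $\MHS(X)$, and that as $\H$ ranges over polarized variations ordered by dominance of their monodromy groups the $\MHS(X,\H)$ exhaust $\MHS(X)$ while the $\cG_x$ form an inverse system. Since any finite Yoneda splice representing a class in $\Ext^\dot_{\MHS(X)}(\F,\V)$ involves only finitely many variations, all lying in a common $\MHS(X,\H)$, passing to the colimit over $\H$ should show that $\Ext^\dot_{\MHS(X,\H)}(\F,\V)\to\Ext^\dot_{\MHS(X)}(\F,\V)$ is an isomorphism, which simultaneously establishes independence of the auxiliary choice of $\H$ and naturality of the resulting map. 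Granting this, the claim that the map is an isomorphism in degrees $\le 1$ and injective in degree $2$ is immediate from the corresponding property of $\theta$ recorded in Section~\ref{sec:db_coho}, since the passage through relative completion is an isomorphism in every degree.

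It remains to upgrade injectivity in degree $2$ to an isomorphism in all degrees when $X$ is an affine curve. Since $X$ is then homotopy equivalent to a wedge of circles, $H^m(X,\V)=0$ for $m\ge 2$; and because $\pi_1(X,x)$ is free, $\cG_x$ is an extension of the linearly reductive group $R_x$ by a free prounipotent group, so that the Hochschild--Serre spectral sequence together with the vanishing of higher cohomology of $R_x$ and the cohomological dimension one of a free prounipotent group gives $H^m(\cG_x,V_x)=0$ for $m\ge 2$ as well. Feeding this vanishing into the exact sequence
$$
0\to\Ext^1_\MHS(\F,H^{m-1})\to H^m_\cD\to\G H^m\to 0
$$
for both $\cG_x$ and $X$ yields $H^m_\cD=0$ for $m\ge 3$ on each side and $H^m_\cD\cong\Ext^1_\MHS(\F,H^1)$ for $m=2$. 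Since relative completion computes $H^1$ as a MHS, the degree-one comparison $H^1(\cG_x,V_x)\cong H^1(X,\V)$ is an isomorphism of mixed Hodge structures, so applying $\Ext^1_\MHS(\F,\blank)$ identifies the two degree-$2$ groups compatibly with the map; hence it is an isomorphism in degree $2$, and trivially in degrees $\ge 3$, completing the affine-curve case.
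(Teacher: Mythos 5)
Your proposal is correct, and for the construction of the map and the assertions in degrees $\le 2$ it is essentially the paper's own argument: the paper's Lemma~\ref{lem:lim} is exactly your colimit step (every finite diagram in $\MHS_\F(X)$ lies in some $\MHS(X,\H)$, and these subcategories are directed), Corollary~\ref{cor:vmhs} is your identification $\Ext^\dot_{\MHS(X,\H)}(\F,\V)\cong H^\dot_\cD(\cG_x,V_x)$, and Theorem~\ref{thm:comp} supplies the map $\theta$; the degree $\le 1$ isomorphism and degree $2$ injectivity are extracted, as you say, from the commutative diagram of Theorem~\ref{thm:comp} together with Propositions~\ref{prop:low_deg} and~\ref{prop:nat_homom}. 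The one place you take a genuinely different route is the affine-curve case. The paper (Corollary~\ref{cor:curve_isom}) cites the second assertion of Proposition~\ref{prop:nat_homom} --- for an affine curve $H^\dot(\cG_x,V_x)\to H^\dot(X,\V)$ is an isomorphism of MHS in \emph{all} degrees --- and concludes by the five lemma applied to the diagram of Theorem~\ref{thm:comp}; you instead note that both ordinary cohomologies vanish in degrees $\ge 2$, so both Deligne--Beilinson groups vanish in degrees $\ge 3$ and reduce in degree $2$ to $\Ext^1_\MHS(\F,H^1)$, where the degree-one comparison (an isomorphism of MHS) finishes the job. The two endgames are equivalent in substance, but yours has one weak link: you justify $H^m(\cG_x,V_x)=0$ for $m\ge 2$ by asserting that the prounipotent radical $\U_x$ is a \emph{free} prounipotent group. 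That fact is true, but it appears nowhere in this paper, and it is normally \emph{deduced} from the very vanishing you want (freeness of $\U_x$ is proved from $H^2(\cG_x,V)\hookrightarrow H^2(\pi_1(X,x),V)=0$), so as written the step is circular. The repair stays entirely inside the paper: the second sentence of Proposition~\ref{prop:low_deg} gives $H^m(\cG_x,V_x)\cong H^m(\pi_1(X,x),V_x)$ in all degrees since $\pi_1$ of an affine curve is free, and the right-hand side vanishes for $m\ge 2$ because free groups have cohomological dimension one.
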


When $\V$ is a polarized variation of $\Z$-Hodge structure over $X$ of negative
weight, the corresponding group of admissible normal functions is, by
definition, the group of extensions of $\Z$ by $\V$ in the category of
admissible variations of $\Z$-MHS over $X$. These are holomorphic sections of
the corresponding bundle of intermediate jacobians $J(\V)$ over $X$ with good
asymptotic properties. Each admissible normal function determines a class in
$H^1(X,\V)$. A special case of the previous result is the following modest
generalization of a result \cite{zucker:normal} of Zucker.

\begin{bigcorollary}
The group of admissible normal function sections of $J(\V)$, after tensoring
with $\Q$, is isomorphic to $H^1_\cD(X,\V_\Q)$. Its image in $H^1(X,\V)$ is the
group $\G H^1(X,\V_\Q)$ of Hodge classes of type $(0,0)$ in
$H^1(X,\V)$.
\end{bigcorollary}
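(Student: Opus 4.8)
The plan is to reduce the corollary to the degree-one case of Theorem~\ref{thm:big} together with the standard short exact sequence computing $H^1_\cD(X,\V_\Q)$. Write $\mathrm{NF}(X,\V)$ for the group of admissible normal function sections of $J(\V)$; by the definition recalled just above the statement, this is the group $\Ext^1_{\MHS_\Z(X)}(\Z,\V)$ of extensions of $\Z$ by $\V$ in the category of admissible variations of $\Z$-MHS over $X$. The first step is the integral-to-rational comparison
$$
\mathrm{NF}(X,\V)\otimes\Q \;\cong\; \Ext^1_{\MHS_\Q(X)}(\Q,\V_\Q).
$$
Given an integral extension $0\to\V\to\E\to\Z\to 0$, tensoring with $\Q$ produces a $\Q$-extension, and this assignment is functorial. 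The content is the standard comparison argument: every rational extension of $\Q$ by $\V_\Q$ descends to an integral one after clearing denominators and choosing a compatible lattice, and two integral extensions become equal over $\Q$ exactly when they differ by a torsion class, which $-\otimes\Q$ kills. This identifies the two $\Ext$ groups after tensoring the source with $\Q$.

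The second step is to invoke Theorem~\ref{thm:big} in degree $1$, which supplies, for every quasi-projective manifold $X$, a product-compatible isomorphism
$$
\Ext^1_{\MHS_\Q(X)}(\Q,\V_\Q)\;\xrightarrow{\ \sim\ }\;H^1_\cD(X,\V_\Q).
$$
Composing with the first step yields the asserted isomorphism $\mathrm{NF}(X,\V)\otimes\Q\cong H^1_\cD(X,\V_\Q)$.

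For the image statement I would use the degree-one short exact sequence for the Deligne--Beilinson cohomology of $X$ — the analogue for $X$ of the exact sequence displayed in the introduction for an affine group — namely
$$
0\to\Ext^1_\MHS(\Q,H^0(X,\V_\Q))\to H^1_\cD(X,\V_\Q)\to \G H^1(X,\V_\Q)\to 0,
$$
where $\G=\Hom_\MHS(\Q,\blank)$, so that $\G H^1(X,\V_\Q)$ is precisely the group of Hodge classes of type $(0,0)$ in $H^1(X,\V_\Q)$. The essential compatibility to verify is that, under the isomorphism of the first two steps, the tautological map sending a normal function to its class in $H^1(X,\V_\Q)$ agrees with the composite of the surjection above with the inclusion $\G H^1(X,\V_\Q)\hookrightarrow H^1(X,\V_\Q)$. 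Granting this, surjectivity of the last map in the sequence shows that the image of $\mathrm{NF}(X,\V)\otimes\Q$ in $H^1(X,\V_\Q)$ is exactly $\G H^1(X,\V_\Q)$, as claimed. Note that the kernel $\Ext^1_\MHS(\Q,H^0(X,\V_\Q))$ need not vanish, since $\V$, and hence $H^0(X,\V_\Q)$, has negative weight; this accounts for the normal functions that are topologically trivial yet nontrivial as extensions of variations.

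The main obstacle is the compatibility invoked in the last paragraph: one must check that the abstract isomorphism furnished by Theorem~\ref{thm:big} intertwines the two natural ``forget to the underlying cohomology class'' maps, so that an image computed inside $H^1_\cD(X,\V_\Q)$ transports correctly to an image inside ordinary cohomology. Everything else is either definitional (the identification of normal functions with $\Ext^1$) or a routine application of the exactness of $-\otimes\Q$ and of the structural exact sequence for $H^1_\cD(X,\V_\Q)$.
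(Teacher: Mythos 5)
Your proposal is correct and follows essentially the same route as the paper, which simply presents this corollary as the degree-one special case of Theorem~\ref{thm:big_thm} (Theorem A): identify normal functions with $\Ext^1$ in the category of admissible variations, pass to $\Q$-coefficients, apply the theorem, and read off the image from the structural exact sequence (\ref{eqn:db-ses}). The compatibility you flag as the ``main obstacle'' is built into the paper's construction, since the map of Theorem~\ref{thm:comp} is defined so as to commute with the projections onto $\G H^1$ in both exact sequences.
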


The primary motivation for this work was to interpret Francis Brown's
computation \cite{brown:mmv} of the periods of twice iterated integrals of
Eisenstein series in terms of the cup product in Deligne cohomology. This is
achieved in Sections~\ref{sec:brown} and \ref{sec:eis_projn}. While not apparent
in this paper, this interpretation uses Zucker's work on the Hodge theory of
modular curves \cite{zucker}. This topic is fully explained in
\cite{hain:modular}.

\noindent{\em Acknowledgments:} I am grateful to Francis Brown for patiently
explaining his period computations \cite{brown:mmv} and for his many helpful
comments and corrections. The author gratefully acknowledges the support of the
Friends of the Institute for Advanced Study which allowed him to spend the
2014--15 academic year at the Institute.

\section{Conventions}

Unless otherwise stated, all modules over a group $G$ are {\em left}
$G$-modules. If $G$ is an affine group and $V$ is a (left) $G$-module, then $V$
is naturally a {\em right} comodule over $\cO(G)$:
$$
\nabla_V : V \to V\otimes \cO(G).
$$

The category of representations of an affine $\F$-group will be denoted by
$\Rep(G)$. The category of representations of $G\otimes K$, where $K$ is an
extension field of $\F$, will be denoted $\Rep_K(G)$.

We will use the algebraists' convention for path multiplication, which means
that paths are composed in the ``functional order''. Suppose that $X$ is a
topological space. The composition $\gamma\mu$ of two paths $\gamma,\mu : [0,1]
\to X$ is defined when $\mu(1)=\gamma(0)$. The path $\gamma\mu$ first traverses
$\mu$ and then $\gamma$. 
We use this convention so that fundamental groups are compatible with Tannaka
theory. The fundamental groupoid $\Pi_X$ of $X$ is the category whose objects
are the points of $X$ and where $\Hom(x,y)$ is the set of homotopy classes of
paths from $x$ to $y$. With our path multiplication convention, there is an
equivalence of categories between local systems over $X$ and {\em left}
$\pi_1(X,x)$ modules, provided that $X$ is path connected.

When $\F$ is a subfield of the real numbers, we will denote the category of $\F$
mixed Hodge structures by $\MHS_\F$ and the trivial Hodge structure $\F(0)$ by
$\F$. For an $\F$-MHS $A$, set
$$
\G A := \Hom_\MHS(\F,A).
$$
For a subring $L$ of $\C$ that contains $\F$, $A_L$ will denote the $L$-module
that underlies $A$. 

Unless otherwise stated, all varieties, with the exception of the affine groups,
are defined over $\C$ and are of finite type.

\section{Cohomology of Affine Groups}

The cohomology theory of linear algebraic groups was initiated by Hochschild
\cite{hochschild} and Hochschild--Serre \cite{hochschild-serre}. This section is
a quick review of the generalization of this theory to affine groups. It is
mainly distilled from \cite[Chapts.~3,4]{jantzen}. The purpose of this section
is to present the theory in such a way that we will be able to check, in later
sections, that various basic constructions in the theory are compatible with
Hodge theory.

\subsection{Preliminaries}

Suppose that $\F$ is a field of characteristic zero, and that $G$ is an affine
group  over $\F$. Denote the coordinate ring of $G$ by $\cO=\cO(G)$. Denote the
category of $G$-modules by $\Rep(G)$. Every $G$-module $V$ is a direct limit of
its finite dimensional submodules. This is an immediate consequence of the fact
that if the image of $v\in V$ under the coaction
$$
\D_V : V \to V \otimes \cO
$$
is $\sum_j v_j \otimes f_j$, then the smallest submodule of $V$ that
contains $v$ is contained in the span of the $v_j$.

If $V$ is a $G$-module, then the composite
$$
\xymatrix{
V \ar[r]^(0.4){\D_V} & V\otimes\cO \ar[r]^{\id_V\otimes\e} &
V\otimes \F \ar[r]^\simeq & V
}
$$
is the identity, where $\e : \cO \to F$ denotes the augmentation (i.e.,
evaluation at the identity $e$). It follows that $\D_V$ is an inclusion.

Regard $\cO$ as a left $G$-module via right translation. On  $R$-rational
points, where $R$ is an $\F$-algebra, this is given by
$$
g : f \mapsto \{x\mapsto f(xg)\},\quad f\in\cO \text{ and } g, x \in G(R).
$$
The corresponding right comodule is given by the coproduct $\D:\cO \to
\cO\otimes\cO$. A left $G$-module homomorphism $\phi : V\to \cO$ is determined
by the composite $r_\phi := \e\circ \phi$
$$
\xymatrix{
V \ar[r]^\phi & \cO \ar[r]^\e & \F
}
$$
of $\phi$ with the augmentation. One recovers $\phi$ as the composite
$$
\xymatrix{
V \ar[r]^(0.4){\D_V} & V\otimes\cO \ar[r]^{r_\phi\otimes\id_\cO} &
\F\otimes\cO \ar[r]^(0.6)\simeq & \cO.
}
$$
This implies easily that $\cO$ is an injective object of
$\Rep(G)$. Indeed, given a diagram
$$
\xymatrix{
0 \ar[r] & A\ar[dr]_{\phi_A} \ar[r]^\psi & B \ar@{..>}[d]^{\phi_B}\cr
&& \cO
}
$$
of $G$-modules with exact top row, choose a lift $r_B : B \to \F$ of $r_A :=
r_{\phi_A} : A \to \F$. The lift $\phi_B$ is the composite
$(r_B \otimes \id_\cO)\circ\Delta_B : B \to \cO$.

A consequence of this computation is that if $V$ is a $G$-module, then the map
$\Delta_V : V \hookrightarrow V\otimes\cO$ is an embedding of $V$ into an
injective $G$-module. That is, $\Rep(G)$ has enough injectives. More generally,
one has:

\begin{lemma}
\label{lem:inj}
If $M$ is an $\F$-vector space, then the right $\cO$-comodule $M\otimes \cO$ is
an injective left $G$-module. For each $G$-module $A$, $G$-module maps $\phi : A
\to M\otimes \cO$ correspond bijectively with $\F$-linear maps $r : A \to
M$. Two such maps correspond if and only if the diagram
$$
\xymatrix{
A \ar[r]^\phi\ar[dr]_r & M\otimes \cO \ar[d]^{\id_M\otimes \epsilon} \cr
& M
}
$$
commutes.
\end{lemma}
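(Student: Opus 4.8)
The plan is to prove Lemma~\ref{lem:inj} by directly generalizing the argument already carried out for $\cO$ itself, which corresponds to the special case $M = \F$. The two assertions to establish are the bijection between $G$-module maps $\phi : A \to M\otimes\cO$ and $\F$-linear maps $r : A \to M$, and the injectivity of $M\otimes\cO$ as a left $G$-module; I would prove the bijection first, since injectivity follows from it by the same lifting diagram used above.

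First I would construct the two directions of the correspondence explicitly. Given a $G$-module map $\phi : A \to M\otimes\cO$, I set $r = (\id_M\otimes\e)\circ\phi$, which is the $\F$-linear map described by the commuting triangle in the statement. In the reverse direction, given any $\F$-linear $r : A \to M$, I define $\phi$ as the composite
$$
\xymatrix{
A \ar[r]^(0.4){\D_A} & A\otimes\cO \ar[r]^(0.45){r\otimes\id_\cO} &
M\otimes\cO.
}
$$
The key verification is that this $\phi$ is a map of $G$-modules (equivalently, of right $\cO$-comodules), which reduces to the coassociativity of $\D_A$ together with the comodule axiom relating $\D_A$ and the coproduct $\D$ on $\cO$; here the right comodule structure on $M\otimes\cO$ is $\id_M\otimes\D$. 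That the two constructions are mutually inverse is exactly the computation displayed for $\cO$ above: the composite $(\id_M\otimes\e)\circ\phi$ recovers $r$ by the counit axiom, and starting from $\phi$ and reconstructing via $\D_A$ returns the original $\phi$ because $\phi$ is a comodule morphism and $(\id_M\otimes\e)$ splits off the $\cO$-factor. These are the same two identities used in the $M=\F$ case, now carried along with the passive tensor factor $M$.

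With the bijection in hand, injectivity is immediate. Given an inclusion $\psi : A \hookrightarrow B$ of $G$-modules and a $G$-map $\phi_A : A \to M\otimes\cO$, I pass to the corresponding $\F$-linear map $r_A = (\id_M\otimes\e)\circ\phi_A : A \to M$. Since $A \to B$ is injective and $M$ is just a vector space, $r_A$ extends to an $\F$-linear map $r_B : B \to M$ (vector spaces are injective in $\Vec_\F$). The correspondence then produces a $G$-module map $\phi_B : B \to M\otimes\cO$ with $(\id_M\otimes\e)\circ\phi_B = r_B$, and because $r_B$ restricts to $r_A$ along $\psi$, the naturality of the bijection forces $\phi_B\circ\psi = \phi_A$. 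Hence $\phi_A$ extends to $B$, so $M\otimes\cO$ is injective.

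The only genuinely substantive point is checking that the reverse construction $\phi = (r\otimes\id_\cO)\circ\D_A$ lands in $G$-module maps, i.e.\ that it is a comodule homomorphism; everything else is bookkeeping with the counit and coassociativity axioms. I do not expect this to be difficult, since the relevant diagram is the comodule-coassociativity square for $A$ tensored on the right with $\cO$, but it is where the coalgebra structure is actually used, so I would state it as the main verification. An alternative, slicker route would be to invoke the adjunction between the forgetful functor $\Rep(G)\to\Vec_\F$ and the cofree comodule functor $M\mapsto M\otimes\cO$: the claimed bijection is precisely the statement that $M\otimes\cO$ is the cofree (injective) comodule cogenerated by $M$, and injectivity of cofree comodules is formal once $\Vec_\F$ has enough injectives. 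I would mention this as the conceptual reason, but give the explicit maps above so that the Hodge-theoretic refinements in later sections can track the constructions functorially.
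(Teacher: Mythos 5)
Your proof is correct and is essentially the paper's own argument: the paper establishes the case $M=\F$ immediately before the lemma (the bijection $\phi \leftrightarrow r_\phi := \e\circ\phi$, with $\phi$ recovered as $(r_\phi\otimes\id_\cO)\circ\D_V$, and injectivity obtained by extending $r$ linearly), then states the lemma as the evident generalization with the passive factor $M$ --- which is exactly the generalization you carry out. Your explicit verification that $(r\otimes\id_\cO)\circ\D_A$ is a comodule map via coassociativity, and the cofree-comodule remark, are details the paper leaves implicit but are correct as written.
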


Extensions of a $G$-module homomorphism $\phi_A : A\to M\otimes \cO$ from a
$G$-submodule $A$ of a $G$-module $B$ correspond to $\F$-linear extensions $r_B
: B \to M$ of $(\id_M\otimes\epsilon)\circ \phi: A \to M$.

\begin{remark}
As is standard, right $G$-modules can be converted into left $G$-modules using
inverses. This functor takes injectives to injectives. In particular, $\cO(G)$
is injective when viewed as both a right and as a left $G$-module. In
particular, it is injective with respect to the action defined by
$$
(gf)(x) := f(g^{-1}x),\quad g,x \in G(R),\ f \in \cO(G).
$$
\end{remark}

\subsection{Cohomology}

The cohomology of $G$ with coefficients in the $G$-module $V$ is defined to be
the group
$$
H^\dot(G,V) = \Ext^\dot_{\Rep(G)}(\F,V)
$$
of Yoneda extensions. The fact that every $G$-module is a direct limit of its
finitely generated submodules implies that if $A$ and $B$ are finite dimensional
$G$-modules, then the natural map
\begin{equation}
\label{eqn:rep_f}
\Ext^\dot_{\Rep^\fte(G)}(A,B) \to \Ext^\dot_{\Rep(G)}(A,B)
\end{equation}
is an isomorphism,
where $\Rep^\fte(G)$ denotes the category of finite dimensional $G$-modules. In
particular,
$$
H^\dot(G,V) \cong \Ext^\dot_{\Rep^\fte(G)}(\F,V)
$$
whenever $V$ is a finite dimensional $G$-module.

Since $\Rep(G)$ has enough injectives,\footnote{Note that $\Rep^\fte(G)$ does
not have enough injectives when $G$ is non-trivial.} $H^\dot(G,V)$ can be
defined by taking an injective resolution $V \to I^\dot$ of $V$ and then taking
the homology of its invariants:
$$
H^\dot(G,V) = H^\dot(\Hom_G(\F,I^\dot)) = H^\dot\big((I^\dot)^G\big).
$$

\begin{remark}
Note that an algebraic group $G$ is reductive if and only if $H^j(G,V)$ vanishes
for all $G$-modules $V$ and all $j>0$.
\end{remark}

\subsection{Standard cochains}

Denote by $E_\dot G$ the simplicial scheme whose scheme of $n$-simplices $E_n G$
is $G^{n+1}$. The group $G$ acts on each $E_n G$ diagonally on the right. The
face maps
$$
d_j : E_n G \to E_{n-1}G,\ j\in\{0,\dots,n\}, \ n>0
$$
are defined by $d_j (x_0,\dots,x_n) = (x_0,\dots,\widehat{x}_j,\dots,x_n)$.
They are $G$-equivariant.

Dually, one has the cosimplicial object $\cO^{\otimes \dot}$ of $\Rep(G)$
whose module in degree $n$ is
$$
\cO(E_n G) = \cO(G^{n+1}) \cong \cO^{\otimes(n+1)}.
$$
The map $G\times G^n \to G^{n+1}=E_nG$ defined by
$$
\big(g,(g_1,\dots,g_n)\big) \mapsto (g,gg_1,gg_1g_2,\dots,gg_1\cdots g_n)
$$
is $G$-equivariant, where $G$ acts on $G\times G^n$ by left multiplication on
the first factor and trivially on $G^n$. The discussion above implies that each
$\cO(E_n G) \cong \cO(G^n)\otimes \cO(G)$ is an injective left $G$-module with
respect to the action\footnote{This argument shows that $\cO^{\otimes n}$ is an
injective $G$ module for all $n\ge 1$, where $G$ acts diagonally. It implies
that the tensor product $\cO^{\otimes n}\otimes \cO^{\otimes m}$ of two such
$G$-modules is also injective.}
$$
(g f)(g_0,\dots,g_n) = f(g^{-1}g_0,\dots,g^{-1}g_n),\quad f \in \cO(G^{n+1}).
$$

Denote the coface maps of $\cO^{\otimes\dot}$ by $d^j : \cO^{\otimes n} \to
\cO^{\otimes(n+1)}$. Define a differential
$$
d : \cO^{\otimes n} \to \cO^{\otimes(n+1)}
$$
by $d = \sum_{0\le j \le n} (-1)^j d^j$. The section $\sigma : E_n G \to
E_{n+1}G$ defined by $\sigma(x_0,\dots,x_n) = (e,x_0,\dots,x_n)$ induces the
$G$-module map $s : \cO^{\otimes(n+1)} \to \cO^{\otimes n}$ defined by
$$
s(f_0\otimes f_1 \otimes \dots \otimes f_n) \mapsto \epsilon(f_0)\, f_1 \otimes
\dots \otimes f_n.
$$
It satisfies $ds + sd = \id - \epsilon$. Consequently, the inclusion $V \to
V\otimes \cO(E_0 G) = V\otimes \cO$ of the constant $V$-valued functions gives
an injective resolution
$
0 \to V \to V\otimes \cO^{\otimes \dot}
$
of $V$ as a left $G$-module.

Thus $H^\dot(G,V)$ can be computed as the cohomology of the complex
\begin{equation}
\label{eqn:std_cochains}
\cC^\dot(G,V) := \Hom_G(\F,V\otimes\cO^{\otimes\dot}) = \Mor_G(E_\dot G,V),
\end{equation}
where $G$ acts on the left of $E_\dot G$ via $g : (g_0,\dots,g_n) \mapsto
(gg_0,\dots,gg_n)$. Identifying $E_n G$ with $G\times G^n$ as above, we see 
that $H^\dot(G,V)$ can be computed using the standard bar complex whose cochains
in degree $n$ are
$$
\cC^n(G,V) = \Mor(G^n,V)
$$
with the usual differential $\delta : \Mor(G^n,V) \to \Mor(G^{n+1},V)$.
Cf.\ Maclane \cite[Ch.~IV, \S4]{maclane} or Brown \cite[Ch.~I,\S5]{brown:book}.

\begin{remark}
\label{rem:qism}
The complex $\cC^\dot(G,V)$ is functorial in $G$ and $V$, and exact in $V$.
If $V^\dot$ is a complex in $\Rep(G)$ that is bounded below, one can
define $H^\dot(G,V^\dot)$ to be the hyper cohomology of the total complex
associated to the double complex
$$
\cC^\dot(G,V^\dot).
$$
Standard homological arguments imply that if $V_1^\dot \to V_2^\dot$ is a 
quasi-isomorphism, then $H^\dot(G,V_1^\dot) \to H^\dot(G,V_2^\dot)$ is an
isomorphism.
\end{remark}

Cohomology behaves well with respect to limits. Suppose that the affine group
$G$ is the inverse limit $\varprojlim_\alpha G_\alpha$ of algebraic groups
$G_\alpha$. Since
$$
\cO(G) = \varinjlim_\alpha \cO(G_\alpha)
$$
and since every $G$-module $V$ can be written as a direct limit of
$G_\alpha$-modules $V_\alpha$,
$$
\big(V\otimes\cO(G)^{\otimes\dot}\big)^G =
\varinjlim_\alpha \big(V\otimes\cO(G_\alpha)^{\otimes\dot}\big)^{G_\alpha}.
$$
Since homology commutes with direct limits:

\begin{proposition}
If the affine group $G$ is an inverse limit of algebraic groups $G_\alpha$ as
above, then for all $G$-modules $V$, we can write $V = \varinjlim_\alpha
V_\alpha$ where $V_\alpha$ is a $G_\alpha$-module and there is a natural
isomorphism
$$
\varinjlim H^\dot(G_\alpha,V_\alpha) \overset{\simeq}{\longrightarrow}
H^\dot(G,V).
$$
\end{proposition}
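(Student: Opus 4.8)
The plan is to show that the standard cochain complex computing $H^\dot(G,V)$ is itself a filtered direct limit of the standard cochain complexes computing the $H^\dot(G_\alpha,V_\alpha)$, and then to pass the limit through cohomology using the exactness of filtered direct limits of $\F$-vector spaces. Everything needed has essentially been assembled in the paragraph preceding the statement; the task is to organize it and to pin down the presentation $V=\varinjlim_\alpha V_\alpha$.

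First I would construct this presentation. Every $G$-module is the union of its finite-dimensional $G$-submodules, and the coaction on a finite-dimensional submodule $W$ lands in $W\otimes\cO(G_\alpha)$ for some $\alpha$, because $\cO(G)=\varinjlim_\alpha\cO(G_\alpha)$ and $W$ is finitely generated; hence $W$ is already a $G_\alpha$-module. I would then define $V_\alpha\subseteq V$ to be the sum of all finite-dimensional $G$-submodules whose coaction factors through $\cO(G_\alpha)$. Since $\beta\ge\alpha$ gives surjections $G_\beta\to G_\alpha$, hence inclusions $\cO(G_\alpha)\subseteq\cO(G_\beta)$, the subspaces $V_\alpha$ form a directed system with $V=\bigcup_\alpha V_\alpha=\varinjlim_\alpha V_\alpha$, and each $V_\alpha$ is a genuine $G_\alpha$-module.

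Next comes the key computation. Because tensor products commute with filtered direct limits and both $V$ and $\cO(G)$ are such limits, in each cosimplicial degree $n$ one has $V\otimes\cO(G)^{\otimes n}=\varinjlim_\alpha\big(V_\alpha\otimes\cO(G_\alpha)^{\otimes n}\big)$, compatibly with the coface maps and hence with the differential. Taking $G$-invariants, and using that $G$ acts on the piece $V_\alpha\otimes\cO(G_\alpha)^{\otimes n}$ through its quotient $G_\alpha$ so that $G$-invariance there coincides with $G_\alpha$-invariance, yields the identification of complexes
$$
\cC^\dot(G,V)=\big(V\otimes\cO(G)^{\otimes\dot}\big)^G=\varinjlim_\alpha\big(V_\alpha\otimes\cO(G_\alpha)^{\otimes\dot}\big)^{G_\alpha}=\varinjlim_\alpha\cC^\dot(G_\alpha,V_\alpha),
$$
which is exactly the displayed identity preceding the statement. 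Since filtered direct limits of $\F$-vector spaces are exact, they commute with the formation of cohomology; applying this to the isomorphism of complexes above gives $H^\dot(G,V)\cong\varinjlim_\alpha H^\dot(G_\alpha,V_\alpha)$, and naturality of this isomorphism follows from the functoriality of the standard cochain complex in $G$ and $V$ (Remark~\ref{rem:qism}).

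I expect the only genuinely delicate point to be the commutation of the invariants functor with the filtered colimit, i.e.\ the middle equality above: one must verify that every $G$-invariant cochain already lies in some $\cC^\dot(G_\alpha,V_\alpha)$ and is $G_\alpha$-invariant there, and conversely that $G_\alpha$-invariant cochains remain $G$-invariant in the limit. This is precisely where the hypothesis that $G=\varprojlim_\alpha G_\alpha$ is used essentially, through the fact that the $G$-action on each finite stage factors through $G_\alpha$; the remainder of the argument is the formal exactness of filtered colimits.
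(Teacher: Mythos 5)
Your proposal is correct and follows essentially the same route as the paper: the paper's (very terse) argument is exactly that $\cO(G)=\varinjlim_\alpha\cO(G_\alpha)$, that $V$ is a direct limit of $G_\alpha$-modules $V_\alpha$, that consequently the standard cochain complex $\big(V\otimes\cO(G)^{\otimes\dot}\big)^G$ is the filtered colimit of the complexes at finite level, and that homology commutes with direct limits. Your write-up merely supplies the details the paper leaves implicit---the construction of $V_\alpha$ from finite-dimensional submodules whose coaction factors through $\cO(G_\alpha)$, and the verification that taking invariants commutes with the filtered colimit---so it is a faithful elaboration rather than a different proof.
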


\subsection{The spectral sequence of a group extension}
\label{sec:ss}

Suppose that $N$ is a normal subgroup of the affine group $G$. The conjugation
action of $G$ on $N$ induces an action of $G$ on $H^\dot(N,V)$. The usual
argument shows that its restriction to $N$ is trivial, so that $H^\dot(N,V)$ is
a $G/N$-module.

Taking a $G$-module $V$ to its $N$-invariants $V^N$ defines a functor $Q:\Rep(G)
\to \Rep(G/N)$. It takes injectives to injectives. Denote the category of
$\F$-vector spaces by $\Vec_\F$. Applying the Grothendieck spectral sequence to
the diagram of functors
$$
\xymatrix{
\Rep(G) \ar[r]_(.45)Q \ar@/^1.5pc/[rrr]^{\Hom_G(\F,\blank)} &
\Rep(G/N) \ar[rr]_(.55){\Hom_{G/N}(\F,\blank)} && \Vec_\F
}
$$
gives the spectral sequence for a group extension:

\begin{proposition}
\label{prop:hs-ss}
If $N$ is a normal subgroup of the affine group $G$, then for all
$G$-modules $V$, there is a spectral sequence satisfying
$$
E^{s,t}_2 = H^s(G/N,H^t(N,V)) \Longrightarrow H^{s+t}(G,V).
$$
\end{proposition}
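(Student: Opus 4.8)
The plan is to deduce the spectral sequence from Grothendieck's spectral sequence of a composite of functors, applied to the factorization
$$
\Hom_G(\F,\blank)\;=\;\Hom_{G/N}(\F,\blank)\circ Q
$$
recorded in the diagram above, where $Q=(\blank)^N:\Rep(G)\to\Rep(G/N)$. The factorization itself is immediate: for a $G$-module $V$ one has $(V^N)^{G/N}=V^G$, so $\Hom_{G/N}(\F,V^N)=\Hom_G(\F,V)$. Both $\Rep(G)$ and $\Rep(G/N)$ have enough injectives, and the right derived functors of the outer functor $\Hom_{G/N}(\F,\blank)$ are, by definition, the $H^s(G/N,\blank)$. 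Thus, once I check the two remaining hypotheses of Grothendieck's theorem---that $Q$ carries injectives to objects acyclic for $\Hom_{G/N}(\F,\blank)$, and that its own derived functors are the $H^t(N,\blank)$---the machine produces
$$
E_2^{s,t}=H^s\big(G/N,\,R^tQ(V)\big)=H^s\big(G/N,H^t(N,V)\big)\Longrightarrow H^{s+t}(G,V).
$$

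First I would verify that $Q$ preserves injectives, which is more than enough for acyclicity. The inflation functor $\mathrm{infl}:\Rep(G/N)\to\Rep(G)$, which regards a $G/N$-module as a $G$-module through $G\to G/N$, is exact, and it is left adjoint to $Q$: since $N$ acts trivially on $\mathrm{infl}\,W$, every $G$-module map $\mathrm{infl}\,W\to V$ factors through $V^N$, giving a natural isomorphism $\Hom_G(\mathrm{infl}\,W,V)\cong\Hom_{G/N}(W,V^N)$. A functor right adjoint to an exact functor preserves injectives, so $Q$ does.

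The main point is the identification $R^tQ(V)\cong H^t(N,V)$ as $G/N$-modules; this is where I expect the real work to lie. By construction $R^tQ(V)=H^t\big((I^\dot)^N\big)$ for any injective resolution $V\to I^\dot$ in $\Rep(G)$, so the identification reduces to showing that injective $G$-modules are acyclic for $N$-invariants, i.e.\ $H^t(N,I)=0$ for $t>0$. By Lemma~\ref{lem:inj} every injective $G$-module is a direct summand of one of the form $M\otimes\cO(G)$, and since $N$ acts only on the second factor, tensoring with the vector space $M$ commutes with $N$-cohomology; the question thus reduces to the assertion that $\cO(G)$, viewed as an $N$-module by right translation, is $N$-acyclic. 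In fact it is $N$-injective: applying the restriction $\cO(G)\to\cO(N)$ to the second tensor factor of the coproduct yields a coaction $c:\cO(G)\to\cO(G)\otimes\cO(N)$ that, by coassociativity, is an embedding of $N$-comodules into the cofree---hence, by Lemma~\ref{lem:inj}, injective---comodule $\cO(G)\otimes\cO(N)$; the existence of an $N$-comodule retraction, equivalently the injectivity of $\cO(G)$ over the closed subgroup $N$, is the standard structural fact recorded in \cite{jantzen}. Granting this, $V\to I^\dot$ restricts to an $N$-acyclic resolution, so $(I^\dot)^N$ computes $H^\dot(N,V)$; the residual action it carries is, by functoriality of the construction, precisely the conjugation action of $G/N$ on $H^\dot(N,V)$ described at the start of this subsection.

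With both hypotheses in hand, Grothendieck's spectral sequence yields the displayed $E_2$-page converging to $R^{s+t}\Hom_G(\F,\blank)(V)=H^{s+t}(G,V)$, which is the assertion. I expect the only genuine obstacle to be the $N$-acyclicity of $\cO(G)$, together with the check that the $G/N$-action produced by the derived functor agrees with the conjugation action; everything else is the formal apparatus of the Grothendieck spectral sequence, whose hypotheses have been arranged by the preceding discussion.
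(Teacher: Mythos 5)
Your proposal follows exactly the paper's route: the paper likewise obtains the spectral sequence by applying Grothendieck's theorem to the factorization $\Hom_G(\F,\blank)=\Hom_{G/N}(\F,\blank)\circ Q$ with $Q=(\blank)^N$, noting that $Q$ takes injectives to injectives. The only difference is that you spell out what the paper leaves implicit --- the adjunction argument for $Q$ preserving injectives and, more substantively, the identification $R^tQ(V)\cong H^t(N,V)$ via the injectivity of $\cO(G)$ as an $N$-module --- and both of these verifications are correct.
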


\begin{corollary}
\label{cor:coho}
If the affine group $G$ is an extension
$$
1 \to N \to G \to R \to 1
$$
of a reductive group by an affine group $N$, then there is a natural
isomorphism
$$
H^\dot(G,V) \cong H^\dot(N,V)^R.
$$
\end{corollary}

\section{Mixed Hodge Structures on Affine Group Schemes}
\label{sec:mhs}

Throughout this section, $\F$ will be a subfield of $\R$. Denote the category of
$\F$-mixed Hodge structures by $\MHS_\F$. It is a neutral tannakian category.
Let $\w^B : \MHS_\F \to \Vec_\F$ be the functor that takes a mixed Hodge
structure to its underlying $\F$-vector space. It is a fiber functor. (The
``Betti realization''.) Set
$$
\pi_1(\MHS_\F) := \Aut^\otimes \w^B.
$$
By Tannaka duality \cite[Thm.~2.11]{deligne-milne}, $\Rep(\pi_1(\MHS_\F))$ is
equivalent to the category of ind-objects of $\MHS_\F$. The category $\MHS_\F$
is equivalent to $\Rep^\fte(\pi_1(\MHS_\F))$ of finite dimensional
$\pi_1(\MHS_\F)$-modules.

An $\F$-mixed Hodge structure on an affine $\F$-group $G$ is, by definition, an
action of $\pi_1(\MHS_\F)$ on $G$. Equivalently, the coordinate ring $\cO(G)$
of $G$ is a Hopf algebra in $\Rep(\pi_1(\MHS_\F))$, the category of ind-objects
of $\MHS_\F$.

A {\em Hodge representation} of a group $G$ with a mixed Hodge structure is an
ind-object $V$ of $\MHS_\F$ together with an action of $G$ on $V$ such that the
coaction
$$
\D_V : V \to V\otimes\cO(G)
$$
is a morphism in ind-$\MHS_\F$. Denote the category of Hodge representations of
$G$ by $\HRep(G)$.

If $G$ has a mixed Hodge structure, then one can form the semi-direct product
$$
\Ghat := \pi_1(\MHS_\F) \ltimes G,
$$
which is also an affine $\F$-group. Note that $\Rep(\Ghat) = \HRep(G)$.

\begin{proposition}
\label{prop:coho_mhs}
If $G$ is an affine group with a mixed Hodge structure, and if $V$ is a Hodge
representation of $G$, then the cohomology group $H^\dot(G,V)$ has a natural
ind-MHS. It is compatible with products in the sense that if $V_1$ and $V_2$ are
Hodge representations of $G$, then the product
\begin{equation}
\label{eqn:prod}
H^\dot(G,V_1) \otimes H^\dot(G,V_2) \to H^\dot(G,V_1\otimes V_2).
\end{equation}
is a morphism of MHS.
\end{proposition}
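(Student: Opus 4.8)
The plan is to deduce the ind-MHS directly from the Tannakian reformulation $\Rep(\Ghat)=\HRep(G)$ recorded just above the statement. The key observation is that the cohomology $H^\dot(G,V)$ is computed by the explicit cochain complex $\cC^\dot(G,V)=(V\otimes\cO(G)^{\otimes\dot})^G$ of \eqref{eqn:std_cochains}, so it suffices to produce a natural ind-MHS on each term of this complex, to check that the differentials are morphisms of MHS, and then to pass to cohomology (the category $\MHS_\F$ being abelian, the cohomology of a complex of MHS inherits an ind-MHS).

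First I would note that when $V$ is a Hodge representation, both $V$ and $\cO(G)$ carry ind-MHS, hence so does each tensor power $V\otimes\cO(G)^{\otimes n}$, since $\MHS_\F$ is a tensor category. The essential point is that these tensor products, together with the coface maps $d^j$ and the differential $d=\sum_j(-1)^j d^j$, live in the category of Hodge representations of $G$: the coface maps are built from the coproduct $\D:\cO\to\cO\otimes\cO$, the augmentation $\e:\cO\to\F$, and the coaction $\D_V:V\to V\otimes\cO$, all of which are morphisms of (ind-)MHS by the hypothesis that $G$ carries an $\F$-MHS and $V$ is a Hodge representation. Equivalently, $V\otimes\cO(G)^{\otimes\dot}$ is a cosimplicial object of $\HRep(G)=\Rep(\Ghat)$, i.e.\ an injective resolution of $V$ in $\Rep(\Ghat)$ whose terms and face maps are $\Ghat$-equivariant. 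Taking $\pi_1(\MHS_\F)$-invariants is the same as taking the MHS-morphism part, so the invariants $(V\otimes\cO^{\otimes\dot})^G$ form a complex in $\MHS_\F$ (really ind-$\MHS_\F$), and its cohomology is $H^\dot(G,V)$ with its induced ind-MHS.

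To see that this MHS is natural and independent of the choices, I would invoke the interpretation $H^\dot(G,V)=\Ext^\dot_{\Rep(\Ghat)}(\F,V)$ together with Remark~\ref{rem:qism}: any two MHS injective resolutions are connected by $\Ghat$-equivariant quasi-isomorphisms, which induce isomorphisms on cohomology compatible with the MHS. Functoriality in $G$ and $V$ then follows from the functoriality of the standard cochain complex. For the product statement \eqref{eqn:prod}, I would use the cup product on standard cochains, given on $\cC^p(G,V_1)\otimes\cC^q(G,V_2)\to\cC^{p+q}(G,V_1\otimes V_2)$ by the usual shuffle/Alexander--Whitney formula. Because this formula is assembled from the comultiplication on $\cO(G)$ and the multiplication $V_1\otimes V_2\to V_1\otimes V_2$, each of which is a morphism of ind-MHS, the cochain-level product is a morphism of complexes in ind-$\MHS_\F$, and hence induces a morphism of MHS on cohomology.

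The main obstacle I anticipate is purely formal rather than computational: verifying that the cosimplicial structure maps on $\cO(G)^{\otimes\dot}$ genuinely respect the ind-MHS. This reduces to the defining compatibility of the Hopf-algebra operations on $\cO(G)$ with its ind-MHS (built into the definition of an MHS on an affine group) and the compatibility of $\D_V$ with MHS (the definition of a Hodge representation), so although the bookkeeping is routine, it is the crux of the argument and should be spelled out carefully. A secondary point worth checking is that the invariants functor $V\mapsto V^G=\Hom_{\Ghat}(\F,V)$ interacts correctly with the ind-MHS, i.e.\ that the $\F$-structure, weight filtration, and Hodge filtration of each cochain group restrict compatibly to the subspace of $G$-invariants; this follows because $G$ acts through $\Ghat$ by MHS-automorphisms, so $V^G$ is a sub-ind-MHS of $V$.
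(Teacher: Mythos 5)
Your overall plan---put the ind-MHS directly on the standard cochain complex $(V\otimes\cO(G)^{\otimes\dot})^G$ and pass to cohomology---is viable, and it is essentially the cochain-level construction that the paper records separately in Proposition~\ref{prop:qism} (specialized to the standard resolution). The paper's own proof of Proposition~\ref{prop:coho_mhs} is shorter and purely formal: $G$ is normal in $\Ghat=\pi_1(\MHS_\F)\ltimes G$, so by the normal-subgroup formalism of Section~\ref{sec:ss} the quotient $\Ghat/G=\pi_1(\MHS_\F)$ acts on $H^\dot(G,V)$, which by Tannaka duality is exactly an ind-MHS, and the product is a map of $\pi_1(\MHS_\F)$-modules. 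So your route is a legitimate concrete alternative (and the one needed later to define Deligne cochains); also, the reference you want for independence of the choice of resolution is Proposition~\ref{prop:qism}, not Remark~\ref{rem:qism}. However, three assertions at the crux of your write-up are false as stated and must be repaired.

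First, $V\otimes\cO(G)^{\otimes\dot}$ is \emph{not} an injective resolution of $V$ in $\Rep(\Ghat)$. Its terms lie in $\Rep(\Ghat)$ and are injective in $\Rep(G)$, i.e.\ they are \emph{relatively injective} in the sense of Definition~\ref{def:rel_inj}; injectives of $\Rep(\Ghat)$ are (summands of) modules of the form $M\otimes\cO(\Ghat)$. This distinction is the whole point of the paper's set-up: $\Ext^\dot_{\Rep(\Ghat)}(\F,V)$ is the Deligne--Beilinson cohomology $H^\dot_\cD(G,V)$, which differs from $H^\dot(G,V)$ by the $\Ext^1_\MHS$ term in (\ref{eqn:ses_db}). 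Relatedly, $V^G=\Hom_G(\F,V)$, not $\Hom_{\Ghat}(\F,V)$, and your gloss ``taking $\pi_1(\MHS_\F)$-invariants is the same as taking the MHS-morphism part'' points the wrong way: you must take $G$-invariants and \emph{retain} the residual $\pi_1(\MHS_\F)$-action; taking $\pi_1(\MHS_\F)$-invariants as well would replace each cochain group by its Hodge classes and compute something else entirely. Second, the justification that the invariants form a sub-ind-MHS---``$G$ acts through $\Ghat$ by MHS-automorphisms''---is not correct: individual elements of $G(\F)$ need not preserve the Hodge filtration (monodromy never does in interesting cases), and only the coaction, not each group element, is a morphism of MHS. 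The correct argument is either (a) normality: $\pi_1(\MHS_\F)$ normalizes $G$ inside $\Ghat$, so its action on $V\otimes\cO(G)^{\otimes n}$ preserves the subspace of $G$-invariants, exhibiting $(V\otimes\cO(G)^{\otimes n})^G$ as a $\pi_1(\MHS_\F)$-submodule, hence a sub-ind-MHS; or (b) directly, for any $W$ in $\HRep(G)$ the invariants are the kernel
$$
W^G=\ker\big(\Delta_W-\id_W\otimes 1 : W \to W\otimes\cO(G)\big)
$$
of a morphism of ind-MHS, hence a sub-ind-MHS. With either repair the rest of your argument goes through: the cosimplicial structure maps and the Alexander--Whitney product (not the shuffle product) are assembled from the Hopf operations of $\cO(G)$ and the coactions, all of which are morphisms of ind-MHS, so the differentials and the cochain-level product descend to morphisms of MHS on cohomology.
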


\begin{proof}
Standard homological arguments (see Section~\ref{sec:ss}) imply that
$H^\dot(G,V)$ is a $\Ghat/G = \pi_1(\MHS_\F)$-module. So $H^\dot(G,V)$ is an
ind-object of $\MHS_\F$. They also imply that the product (\ref{eqn:prod}) is a
$\pi_1(\MHS_\F)$-module homomorphism.
\end{proof}

In particular, if $V$ is in $\HRep(G)$, then $V^G$ is a sub-MHS of $V$.

\begin{proposition}
If $G$ is an affine group with an $\F$-mixed Hodge structure and if $V$ is a
Hodge representation of $G$, then there is a natural exact sequence
\begin{equation}
\label{eqn:ses_ext}
0 \to \Ext^1_{\MHS_\F}(\F,H^{j-1}(G,V)) \to \Ext^j_{\HRep(G)}(\F,V)
\to \G H^j(G,V) \to 0.
\end{equation}
\end{proposition}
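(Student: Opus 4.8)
The plan is to recognize the group $\Ext^\dot_{\HRep(G)}(\F,V)$ as ordinary affine-group cohomology and then feed the extension defining the Hodge structure into the spectral sequence of Section~\ref{sec:ss}. Since $\HRep(G) = \Rep(\Ghat)$ with $\Ghat = \pi_1(\MHS_\F)\ltimes G$, the definition of affine-group cohomology gives
\[
\Ext^\dot_{\HRep(G)}(\F,V) = \Ext^\dot_{\Rep(\Ghat)}(\F,V) = H^\dot(\Ghat,V).
\]
First I would apply Proposition~\ref{prop:hs-ss} to the group extension
\[
1 \to G \to \Ghat \to \pi_1(\MHS_\F) \to 1,
\]
which, taking $N = G$ so that $\Ghat/N = \pi_1(\MHS_\F)$, produces a spectral sequence with
\[
E_2^{s,t} = H^s\big(\pi_1(\MHS_\F),\, H^t(G,V)\big) \Longrightarrow H^{s+t}(\Ghat,V).
\]
By Proposition~\ref{prop:coho_mhs} each $H^t(G,V)$ is a $\pi_1(\MHS_\F)$-module, i.e.\ an ind-object of $\MHS_\F$, so by Tannaka duality the cohomology of $\pi_1(\MHS_\F)$ with these coefficients is computed in the category of mixed Hodge structures:
\[
E_2^{s,t} = \Ext^s_{\MHS_\F}\big(\F,\, H^t(G,V)\big).
\]

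The crucial input is that the tannakian category $\MHS_\F$ has cohomological dimension one: $\Ext^s_{\MHS_\F}(\F, M) = 0$ for every ind-MHS $M$ and every $s \ge 2$. This is a classical fact (the vanishing of $\Ext^2$ in mixed Hodge structures, due to Beilinson, with $\Ext^1$ computed by Carlson). With this vanishing, the $E_2$-page is supported in the two columns $s = 0$ and $s = 1$. Consequently every differential $d_r$ for $r \ge 2$ has either source or target in a zero column, so the spectral sequence degenerates at $E_2$ and $E_2 = E_\infty$.

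A two-column spectral sequence abutting to $H^\dot(\Ghat,V)$ yields, for each $j$, a short exact sequence
\[
0 \to E_\infty^{1,j-1} \to H^j(\Ghat,V) \to E_\infty^{0,j} \to 0.
\]
Identifying the edge terms via the computation above — namely $E_2^{1,j-1} = \Ext^1_{\MHS_\F}(\F, H^{j-1}(G,V))$ and $E_2^{0,j} = \Hom_{\MHS_\F}(\F, H^j(G,V)) = \G H^j(G,V)$ — and replacing $H^j(\Ghat,V)$ by $\Ext^j_{\HRep(G)}(\F,V)$ gives precisely the asserted sequence (\ref{eqn:ses_ext}). Naturality follows from the functoriality of the Hochschild--Serre spectral sequence in $V$.

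The argument is formal once the two ingredients are in place, so the main obstacle is not a calculation but the verification that the two hypotheses genuinely apply: that the $E_2$-terms of the extension spectral sequence really are $\Ext$-groups \emph{in $\MHS_\F$} (this is where Proposition~\ref{prop:coho_mhs} and the Tannakian identification $\Rep(\pi_1(\MHS_\F)) \simeq \text{ind-}\MHS_\F$ are used), and the invocation of cohomological dimension one for $\MHS_\F$, which is what collapses the potential five-term sequence to the clean three-term sequence and forces degeneration.
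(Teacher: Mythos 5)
Your proof is correct and takes essentially the same approach as the paper: the paper's own (one-line) proof likewise applies Proposition~\ref{prop:hs-ss} to the extension $1 \to G \to \Ghat \to \pi_1(\MHS_\F) \to 1$ and invokes the vanishing of $\Ext^j_{\MHS_\F}$ for $j>1$. You have merely written out the identification $\Ext^\dot_{\HRep(G)}(\F,V) = H^\dot(\Ghat,V)$, the Tannakian computation of the $E_2$-terms, and the two-column degeneration that the paper leaves implicit.
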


\begin{proof}
Since $\Ext^j_{\MHS_\F}(A,B)$ vanishes whenever $j>1$, the result follows by
applying Proposition~\ref{prop:hs-ss} to the extension
$1 \to G \to \Ghat \to \pi_1(\MHS_\F) \to 1$.
\end{proof}

\subsection{Cochains}

Here we dispose of several technical issues that arise in subsequent sections.
Throughout this section, $G$ is an affine group with an $\F$-MHS.

The first task is to give an alternative construction of the MHS on
$H^\dot(G,V)$ for each Hodge representation $V$ of $G$. Note that $\HRep(G)$ has
enough injectives as $\HRep(G) = \Rep(\Ghat)$.

\begin{definition}[cf.\ \cite{carlson-hain}]
\label{def:rel_inj}
A {\em relatively injective $G$-module} is an object $I$ of $\HRep(G)$ that is
an injective $G$-module. A {\em relatively injective resolution} of an object
$V$ of $\HRep(G)$ is a resolution $V \to I^\dot$ in $\HRep(G)$ which is an
injective resolution of $V$ in $\Rep(G)$. Equivalently, it is a resolution in
$\HRep(G)$ in which each $I^n$ is a relatively injective $G$-module.
\end{definition}

Injective modules of the form $V\otimes \cO(G)^{\otimes n}$, where $V$ is in
$\HRep(G)$, are relatively injective. The standard resolution $V \to V\otimes
\cO(G)^{\otimes \dot}$ of $V$ in $\Rep(G)$ is a relatively injective resolution
of $V$ in $\HRep(G)$.

\begin{proposition}
\label{prop:qism}
If $V \to I^\dot$ is a relatively injective resolution of $V$ in $\HRep(G)$,
then $(I^\dot)^G$ is a complex in ind-$\MHS_\F$ and there is a natural
isomorphism
$$
H^\dot(G,V) \cong H^\dot((I^\dot)^G)
$$
in ind-$\MHS_F$.
\end{proposition}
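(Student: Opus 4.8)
The plan is to recognize the natural ind-MHS on $H^\dot(G,V)$ as the right derived functor of the invariants functor on $\HRep(G)=\Rep(\Ghat)$, and then to observe that a relatively injective resolution is precisely an acyclic resolution for that functor, so that it computes it. Write $Q:=(\blank)^G=\Hom_G(\F,\blank)$. Since $G$ is normal in $\Ghat=\pi_1(\MHS_\F)\ltimes G$ with quotient $\pi_1(\MHS_\F)$, the functor $Q$ refines to a left exact functor $\Rep(\Ghat)\to\Rep(\pi_1(\MHS_\F))=\text{ind-}\MHS_\F$. The assertion that $(I^\dot)^G$ is a complex in ind-$\MHS_\F$ is then immediate: each $I^n$ lies in $\Rep(\Ghat)$, so $(I^n)^G=Q(I^n)$ is a $\pi_1(\MHS_\F)$-module, i.e.\ an ind-object of $\MHS_\F$, and each differential, being a morphism in $\Rep(\Ghat)$, induces a morphism of $\pi_1(\MHS_\F)$-modules, hence of ind-MHS, on invariants.

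Next I would pin down the natural Hodge structure. By construction (Proposition~\ref{prop:coho_mhs}, via the homological arguments of Section~\ref{sec:ss}) the ind-MHS on $H^\dot(G,V)$ is the $\pi_1(\MHS_\F)$-action obtained by applying $Q$ to a $\Ghat$-equivariant injective resolution; that is, it is $R^\dot Q(V)$. Because the Betti fibre functor $\w^B$ is exact and faithful and $\w^B\circ Q$ is ordinary $G$-invariance valued in $\Vec_\F$, the underlying vector space of $R^\dot Q(V)$ will be $H^\dot(G,V)$ once one knows that $\Ghat$-injective resolutions restrict to $G$-injective ones—the point established in the next step.

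The key step, which I expect to be the main obstacle, is the acyclicity of relatively injectives: every relatively injective $I$ satisfies $R^i Q(I)=0$ for $i>0$. This rests on the fact that restriction along $G\hookrightarrow\Ghat$ preserves injectivity. As a scheme $\Ghat\cong\pi_1(\MHS_\F)\times G$, and right translation by $G$ moves only the $G$-coordinate, so $\cO(\Ghat)$ restricts to $\cO(\pi_1(\MHS_\F))\otimes\cO(G)$ as a $G$-module, which is injective by Lemma~\ref{lem:inj}. Hence the standard injective $\Ghat$-modules $M\otimes\cO(\Ghat)$ restrict to injective $G$-modules, and arbitrary injective $\Ghat$-modules, being retracts of these, do too. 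Therefore a $\Ghat$-injective resolution $I\to J^\dot$ restricts to a $G$-injective resolution, giving $\w^B R^i Q(I)\cong H^i(G,I)$, which vanishes for $i>0$ since $I$ is injective in $\Rep(G)$; faithfulness and exactness of $\w^B$ then force $R^iQ(I)=0$.

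Finally I would conclude. The resolution $V\to I^\dot$ is exact in $\Rep(\Ghat)$—exactness being tested on underlying vector spaces, where it holds—and consists of $Q$-acyclic objects, so the standard principle that right derived functors are computed by acyclic resolutions yields a natural isomorphism $R^\dot Q(V)\cong H^\dot(Q(I^\dot))=H^\dot((I^\dot)^G)$ in ind-$\MHS_\F$. Together with the identification of $R^\dot Q(V)$ with the natural MHS on $H^\dot(G,V)$ from the second step, this is the desired isomorphism; its naturality is inherited from the functoriality of the comparison between acyclic resolutions and promoted from $\Vec_\F$ to ind-$\MHS_\F$ by the faithful exact functor $\w^B$.
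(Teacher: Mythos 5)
Your proof is correct, and it rests on exactly the same key fact as the paper's, but it is organized through a different formalism, so a comparison is worthwhile. The paper argues concretely: it forms the bicomplex $J^{s,t} = I^t\otimes\cO(\Ghat)^{\otimes(s+1)}$, notes that $\tot J^{\dot\dot}$ is an injective resolution of $V$ simultaneously in $\Rep(\Ghat)$ and in $\Rep(G)$ (the latter because $\cO(\Ghat)\cong\cO(G)\otimes\cO(\pi_1(\MHS_\F))$ is an injective $G$-module), and then reads off that $(I^\dot)^G\to\tot(J^{\dot\dot})^G$ is a quasi-isomorphism of complexes of ind-MHS, the right-hand complex being the one that carries the natural MHS. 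You instead phrase the whole statement as an instance of general derived-functor theory: the natural MHS is $R^\dot Q(V)$ for $Q=(\blank)^G:\Rep(\Ghat)\to{}$ind-$\MHS_\F$, relative injectivity implies $Q$-acyclicity, and acyclic resolutions compute derived functors. The shared engine is the $G$-injectivity of $\cO(\Ghat)$ via Lemma~\ref{lem:inj}; the genuine difference is that your route needs the stronger statement that \emph{arbitrary} injective $\Ghat$-modules restrict to injective $G$-modules, which you correctly supply with the retract argument (any injective $J$ splits off its embedding $\Delta_J: J\hookrightarrow J\otimes\cO(\Ghat)$ into a standard injective), whereas the paper never needs this because its bicomplex uses only standard injectives. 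What your packaging buys is modularity and a transparent structural slogan (relatively injective objects are precisely acyclic for the invariants functor, so the proposition is an instance of a general principle); what the paper's buys is self-containedness, since the comparison quasi-isomorphism is exhibited by hand rather than deduced from the hypercohomology/Cartan--Eilenberg formalism, and no claim about all injectives of $\Rep(\Ghat)$ is required.
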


\begin{proof} Let $J^{s,t} = I^t\otimes \cO(\Ghat)^{\otimes(s+1)}$. Then $0 \to
I^t \to J^{\dot,t}$ is an injective resolution of $I^t$ in $\Rep(\Ghat)$, $V\to
\tot J^{\dot\dot}$ is an injective resolution of $V$ in $\Rep(\Ghat)$, and 
$I^\dot \to \tot J^{\dot\dot}$ is a quasi-isomorphism in $\Rep(\Ghat)$.

Since $\cO(\Ghat) \cong \cO(G)\otimes \cO(\pi_1(\MHS_\F))$ as a $G$-module,
$\cO(\Ghat)$ is an injective $G$-module. Consequently, $V \to \tot J^{\dot\dot}$
is an injective resolution in $\Rep(G)$, which implies that
$$
(I^\dot)^G \to \tot (J^{\dot\dot})^G
$$
is a quasi-isomorphism. It induces an isomorphism of MHS on cohomology. The
right hand complex is a complex of $\Ghat/G \cong \pi_1(\MHS)$-modules which
computes the natural MHS on $H^\dot(G,V)$. The result follows.
\end{proof}

The exactness properties of $\Gr^W_\dot$ and $\Gr_F^\dot$ imply that relatively
injective modules have the following property:

\begin{lemma}
\label{lem:lifts}
Suppose that $0 \to A \to B$ is an exact sequence in $\HRep(G)$. If $I$ is
a relatively injective $G$-module, then the natural surjection
$$
\Hom_G(B,I) \to \Hom_G(A,I)
$$
is strict with respect to the Hodge and weight filtrations $F^\dot$ and
$W_\dot$. In particular the maps
\begin{multline*}
W_0\Hom_G(B_\F,I_\F) \to W_0\Hom_G(A_\F,I_\F),\
W_0\Hom_G(B_\C,I_\C) \to W_0\Hom_G(A_\C,I_\C)
\cr \text{ and }
F^0W_0\Hom_G(B_\C,I_\C) \to F^0W_0\Hom_G(A_\C,I_\C)
\end{multline*}
are surjective. \qed
\end{lemma}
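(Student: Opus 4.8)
The plan is to recognize the three displayed maps as the filtered pieces of a single \emph{surjective morphism of mixed Hodge structures} and then to read them off from strictness. Concretely, I would first exhibit natural MHS on $\Hom_G(A,I)$ and $\Hom_G(B,I)$ for which the restriction map $\phi:\Hom_G(B,I)\to\Hom_G(A,I)$ is a morphism of MHS whose underlying $\F$-linear map is surjective. Granting that, Deligne's strictness theorem gives $\phi(W_n\Hom_G(B,I))=W_n\Hom_G(A,I)\cap\im\phi=W_n\Hom_G(A,I)$, and the analogous identity for the Hodge filtration after complexification; this is exactly the asserted strictness, and surjectivity of the three maps in the statement is its immediate consequence.

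To equip $\Hom_G(\blank,I)$ with its MHS I would use that $I$ lies in $\HRep(G)$, so the internal hom $\Hom_\F(B,I)$ is again an ind-Hodge representation of $G$, and restriction $\Hom_\F(B,I)\to\Hom_\F(A,I)$, being precomposition with the morphism $A\hookrightarrow B$ of $\HRep(G)$, is a $G$-equivariant morphism of ind-MHS. Passing to $G$-invariants, the consequence of Proposition~\ref{prop:coho_mhs} that ``$V^G$ is a sub-MHS of $V$'' shows that $\Hom_G(B,I)=\Hom_\F(B,I)^G$ and $\Hom_G(A,I)=\Hom_\F(A,I)^G$ are sub-MHS and that $\phi$ is a morphism of MHS. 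For surjectivity of the underlying map I would invoke the \emph{other} half of relative injectivity: since $I$ is injective in $\Rep(G)$ and $A\hookrightarrow B$ is an inclusion of $G$-modules, every $G$-map $A\to I$ extends over $B$, so $\phi$ is onto. (For the relatively injective modules $I=M\otimes\cO(G)^{\otimes n}$ that actually arise, one may instead identify $\Hom_G(\blank,I)$ with an internal hom $\Hom_\F(\blank,M')$ of mixed Hodge structures via Lemma~\ref{lem:inj}, where the surjectivity is the elementary extension of filtered linear maps.)

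Finally I would convert the surjective morphism of MHS $\phi$ into the three asserted surjections. Surjectivity of $\phi$ produces a short exact sequence $0\to\ker\phi\to\Hom_G(B,I)\to\Hom_G(A,I)\to 0$ of MHS, and applying $W_0(\blank)_\F$, $W_0(\blank)_\C$ and $F^0W_0(\blank)_\C$ yields the three displayed maps. These functors are exact on $\MHS_\F$ — which is precisely the exactness of $\Gr^W_\dot$ and of $\Gr_F^\dot\Gr^W_\dot$ flagged before the statement — so each of the three maps is surjective. I expect the only genuine obstacle to lie in the first step: verifying that $\Hom_G(\blank,I)$ really carries the ind-MHS for which $\phi$ is a morphism, including the passage to the limits defining the internal hom of ind-objects and the reduction to finite-dimensional sub-MHS needed to apply the finite-dimensional strictness theorem (strictness being stable under the direct limits involved). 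Once this compatibility is secured, the three surjectivity statements are a formal consequence of strictness.
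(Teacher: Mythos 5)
Your proposal is correct and takes essentially the same approach as the paper, which disposes of the lemma in a single sentence: surjectivity of the underlying map comes from injectivity of $I$ in $\Rep(G)$, and surjectivity on the filtered pieces comes from strictness, i.e.\ the exactness of $\Gr^W_\dot$ and $\Gr_F^\dot$ applied to the resulting morphism of (ind-)MHS on the $\Hom$-spaces. The ind-object bookkeeping you flag at the end is a real technical point, but it is equally suppressed in the paper's own treatment.
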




\subsection{Deligne--Beilinson cohomology of an affine group}

It is convenient to have a complex which computes $\Ext^\dot_{\HRep(G)}(\F,V)$
which depends only on $G$ and its MHS. This is the complex of Deligne cochains
defined below.

\begin{definition}
The {\em Deligne--Beilinson cohomology} $H_\cD^\dot(G,V)$ of an affine group
$G$ with an $\F$-mixed Hodge structure with coefficients in a Hodge
representation $V$ is defined to be the cohomology of the complex
\begin{equation}
\label{eqn:dbcoho-group}
\cone\big(F^0W_0\cC^\dot(G,V)_\C \oplus W_0 \cC^\dot(G,V)_\F
\to W_0\cC^\dot(G,V)_\C\big)[-1].
\end{equation}
\end{definition}

As in the case of classical Deligne-Beilinson cohomology, it sits in an
exact sequence
\begin{equation}
\label{eqn:ses_db}
0 \to \Ext^1_{\MHS_\F}(\F,H^{j-1}(G,V)) \to H^j_\cD(G,V) \to \G H^j(G,V) \to 0.
\end{equation}

Proposition~\ref{prop:qism} implies that the DB-cohomology of $G$ can be
computed with any relatively injective resolution of $V$.

The following result generalizes the main result of \cite{carlson-hain}.

\begin{theorem}
\label{thm:del_is_ext}
If $G$ is an affine group with an $\F$-mixed Hodge structure, and if $V$ is a
Hodge representation of $G$, then there is a natural isomorphism
$$
\Ext^\dot_{\HRep_\F(G)}(\F,V) \overset{\simeq}{\longrightarrow} H^\dot_\cD(G,V).
$$
\end{theorem}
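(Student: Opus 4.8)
The goal is to produce a natural isomorphism $\Ext^\dot_{\HRep(G)}(\F,V) \to H^\dot_\cD(G,V)$. Both sides fit into the same kind of short exact sequence: the Yoneda $\Ext$ into (\ref{eqn:ses_ext}) and the Deligne--Beilinson cohomology into (\ref{eqn:ses_db}), and these two sequences have literally the same outer terms, namely $\Ext^1_{\MHS_\F}(\F,H^{j-1}(G,V))$ and $\G H^j(G,V)$. The plan is therefore to construct a map of complexes (or at least a map on cohomology) that is compatible with both sequences and induces the identity on the two outer terms; the five lemma then forces it to be an isomorphism. So the real content is building the map and checking compatibility, not a separate injectivity/surjectivity argument.

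\medskip

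The plan is to compute both sides using the \emph{same} relatively injective resolution $V \to I^\dot$, which exists because $\HRep(G) = \Rep(\Ghat)$ has enough injectives and the standard resolution $V \to V\otimes\cO(G)^{\otimes\dot}$ is relatively injective (as noted after Definition~\ref{def:rel_inj}). By Proposition~\ref{prop:qism}, applying $(\blank)^G$ to $I^\dot$ gives a complex in ind-$\MHS_\F$ whose cohomology is $H^\dot(G,V)$ \emph{as a MHS}. Write $C^\dot := (I^\dot)^G$, a complex of ind-MHS. First I would observe that $H^\dot_\cD(G,V)$, defined in (\ref{eqn:dbcoho-group}) via the cochain complex $\cC^\dot(G,V)$, can equally be computed from $C^\dot$: the Deligne cone construction $\cone\bigl(F^0W_0 C^\dot_\C \oplus W_0 C^\dot_\F \to W_0 C^\dot_\C\bigr)[-1]$ depends only on the filtered complex up to filtered quasi-isomorphism, and the comment after (\ref{eqn:ses_db}) records exactly this independence. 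Thus $H^\dot_\cD(G,V) = H^\dot$ of this cone on $C^\dot$.

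\medskip

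Next I would identify the Yoneda side with the same cone. The key point is that a class in $\Ext^j_{\HRep(G)}(\F,V)$ is computed by morphisms $\F \to I^\dot$ in $\HRep(G)$ up to homotopy, i.e.\ by elements of $\Hom_{\HRep(G)}(\F, I^\dot)$. Since $\Hom_{\HRep(G)}(\F,I^n)$ consists of those $G$-module maps $\F \to I^n$ that are morphisms of MHS, and a morphism of MHS $\F(0) \to A$ is precisely an element of $F^0 W_0 A_\C$ together with a compatible element of $W_0 A_\F$ mapping to the same element of $W_0 A_\C$, we have
$$
\Hom_{\HRep(G)}(\F, I^n) = \ker\bigl(F^0 W_0 C^n_\C \oplus W_0 C^n_\F \to W_0 C^n_\C\bigr),
$$
with $C^n = (I^n)^G$. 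This is exactly the degree-zero part of the Deligne cone in each cochain degree. Computing $\Ext^\dot_{\HRep(G)}(\F,V)$ as the cohomology of $\Hom_{\HRep(G)}(\F, I^\dot)$ and comparing term by term produces the desired map to the cone cohomology; here is where Lemma~\ref{lem:lifts} is essential, since its strictness (surjectivity of the maps on $W_0$ and $F^0 W_0$) is what guarantees that the naive Hom-complex actually computes the full cone, i.e.\ that no higher $\Ext$ correction terms intervene and that the comparison is a quasi-isomorphism rather than merely a map.

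\medskip

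The main obstacle, and the step I would spend the most care on, is precisely this last identification: showing that the complex $\Hom_{\HRep(G)}(\F,I^\dot)$ agrees with the Deligne cone complex built from $C^\dot$ rather than just mapping to it. The subtlety is that Yoneda $\Ext$ in $\HRep(G)$ measures extensions in the \emph{Hodge} category, which couples the $\F$-structure and the Hodge filtration, whereas the cone mixes three complexes with a connecting differential; one must verify that the homotopies used to compute $\Ext$ respect these filtrations strictly, and this is exactly the content of Lemma~\ref{lem:lifts}. Once the complexes are identified, naturality and compatibility with the two short exact sequences (\ref{eqn:ses_ext}) and (\ref{eqn:ses_db}) are formal, and the isomorphism on the outer terms together with the five lemma finishes the proof. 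I would also remark that this argument is the non-unipotent generalization of the computation in \cite{carlson-hain}, and that the only new ingredient beyond the unipotent case is the presence of the reductive quotient, which is handled uniformly because all the resolutions involved are relatively injective.
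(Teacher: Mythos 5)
Your overall strategy---construct a map compatible with the exact sequences (\ref{eqn:ses_ext}) and (\ref{eqn:ses_db}), check it induces the identity on their common outer terms, and conclude by the five lemma---is exactly the paper's strategy. But your construction of the map has a genuine gap, and it occurs at the step you yourself flag as the crucial one. A relatively injective resolution $V\to I^\dot$ consists of objects that are injective in $\Rep(G)$, \emph{not} in $\HRep(G)=\Rep(\Ghat)$; consequently $\Ext^\dot_{\HRep(G)}(\F,V)$ is not the cohomology of $\Hom_{\HRep(G)}(\F,I^\dot)$. The objects $I^n$ are acyclic for the invariants functor $(\blank)^G$ (this is the content of Proposition~\ref{prop:qism}), but not for the composite $\Hom_{\HRep(G)}(\F,\blank)=\G\circ(\blank)^G$, whose derived functor picks up the $\Ext^1_{\MHS_\F}$ terms. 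Concretely, take $G$ trivial and $V=\F(1)$: then $\HRep(G)$ is ind-$\MHS_\F$, every object is a relatively injective $G$-module, the standard resolution is $\F(1)\to\F(1)\to\cdots$ with differentials alternating between $0$ and the identity, and your complex $\Hom_{\HRep(G)}(\F,I^\dot)$ is identically zero since $\G\,\F(1)=0$; yet $\Ext^1_{\MHS_\F}(\F,\F(1))\cong\C/\F(1)\neq 0$. For the same reason the inclusion of your kernel subcomplex into the Deligne cone is not a quasi-isomorphism: the cone's cohomology contains the $\Ext^1_{\MHS_\F}$ terms of (\ref{eqn:ses_db}), which a kernel of MHS-morphisms can never see. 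Lemma~\ref{lem:lifts} does not repair this; it is a strictness statement about restriction maps $\Hom_G(B,I)\to\Hom_G(A,I)$, not an acyclicity statement for $\G$. Indeed, if your two claims both held, you would have proved $\Ext^\dot_{\HRep(G)}(\F,V)\cong \G H^\dot(G,V)$, contradicting (\ref{eqn:ses_ext}) whenever the $\Ext^1_\MHS$ term is nonzero.

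The missing idea is that a Yoneda extension in $\HRep(G)$ carries period data beyond any filtration-compatible map of complexes, and this data must land in the third (homotopy) slot $W_0\cC^{j-1}(G,V)_\C$ of the cone. The paper's proof does exactly this: from an $n$-extension it constructs \emph{two} chain maps into the standard resolution---$\phi^\DR_\dot$ lying in $F^0W_0\Hom_\C(E^\dot,I^\dot)^G$ and $\phi^\betti_\dot$ defined over $\F$ lying in $W_0\Hom_\F(E^\dot,I^\dot)^G$---together with a chain homotopy $s_\dot$ between them in $W_0\Hom_\C$, and sends the extension to the Deligne cocycle $(\phi^\DR_n,\phi^\betti_n,s_n)$; the homotopy component $s_n$ is precisely the period data your complex discards. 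It is in constructing these two maps and the homotopy with the stated filtration properties that Lemma~\ref{lem:lifts} is actually used, after which well-definedness follows from the homotopy properties of injective resolutions and the five-lemma argument proceeds as you describe.
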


\begin{proof}[Sketch of Proof]
This is a refinement of the standard proof that the Yoneda product in Ext groups
corresponds to the cup product in cohomology constructed using injective
resolutions. To prove the result, it suffices to construct a homomorphism
\begin{equation}
\label{eqn:nat_isom}
\Ext^\dot_{\HRep_\F(G)}(\F,V) \to H^\dot_\cD(G,V)
\end{equation}
that induces a map from the exact sequence (\ref{eqn:ses_ext}) to the exact
sequence (\ref{eqn:ses_db}) and is an isomorphism on the kernels and cokernels.

Let
$$
0 \to V \to E^0 \to E^1 \to \cdots \to E^{n-1} \to F \to 0
$$
be a Yoneda $n$-extension in $\HRep_\F(G)$. Then one has the standard resolution
$$
0 \to V \to I^0 \to I^1 \to I^2 \to \cdots
$$
of $V$ in $\Rep(G)$, where $I^n = V\otimes\cO(G)^{\otimes(n+1)}$. This is a
relatively injective resolution of $V$ in $\HRep(G)$.

\def\ardd#1{{\ar[d]_{\phi_{#1}^\DR}\ar@<0.75ex>[d]^{\phi_{#1}^\betti}}}

Standard homological arguments and Lemma~\ref{lem:lifts} imply that we have the
diagram:
$$
\xymatrix@C=30pt@R=32pt{
0 \ar[r] & V \ar[r]\ar@{=}[d] & E^0 \ar[r]\ardd{0}\ar[dl]_{s_0} &
E^1 \ardd{1}\ar[dl]_(0.4){s_1} & \cdots & 
E^{n-1} \ar[r]\ardd{n-1} &
F \ar[r]\ardd{n} \ar[dl]_(0.4){s_n} & 0 \ar[d]
\cr 
0 \ar[r] & V \ar[r] & I^0 \ar[r] & I^1 
& \cdots & 
I^{n-1} \ar[r] & I^n \ar[r] & I^{n+1}
}
$$
where $\phi_\dot^\DR$ and $\phi_\dot^\betti$ are chain maps satisfying
$$
\phi_j^\DR \in F^0W_0 \Hom_\C(E^j,I^j)^G
\text{ and } \phi_j^\betti \in W_0\Hom_\F(E^j,I^j)^G
$$
(convention $E^n = F$) and $s_\dot$ is a chain homotopy from $\phi_\dot^\DR$
to $\phi_\dot^\betti$ satisfying
$$
s_j \in W_0\Hom_\C(E^j,I^{j-1})^G,\quad s_0 = 0.
$$
Since $\delta (s_n) = \phi_n^\DR - \phi_n^\betti$, the triple
$(\phi_n^\DR,\phi_n^\betti,s_{n})$ is a cocycle in the complex
(\ref{eqn:dbcoho-group}). The homotopy properties of injective resolutions imply
that the cohomology class of this cocycle is independent of the choices of
$\phi_\dot^\DR$, $\phi_\dot^\betti$ and $s_\dot$.\footnote{A detailed proof of
the dual assertion in the unipotent case is given in
\cite[\S8]{carlson-hain}.} There is therefore a well defined map
$$
\Ext^\dot_{\HRep(G)}(\F,V) \to H^\dot_\cD(G,V).
$$
It is easily seen to be a homomorphism with the property that it induces the
identity on the kernels and cokernels of the exact sequences (\ref{eqn:ses_ext})
and (\ref{eqn:ses_db})
\end{proof}

\subsection{Cochains}

Suppose that $A^\dot$ is a complex in ind-$\MHS_\F$. Then one can consider the
complex
$$
A^\dot_\cD := \cone\big(F^0W_0 A^\dot_\C 
\oplus W_0 \A^\dot_\F \to W_0\A^\dot_\C\big)[-1].
$$
Following Beilinson \cite{beilinson}, we write an element of degree $j$ in the
form
$$
\begin{bmatrix}
& c & \cr w & & z
\end{bmatrix}
$$
where $w\in F^0W_0 A^j_\C$, $z\in W_0 A^j_\F$ and $c\in W_0 A^{j-1}_\F$. With
this notation, the differential is
$$
\delta :
\begin{bmatrix}
& c & \cr w & & z
\end{bmatrix}
\mapsto
\begin{bmatrix}
& -dc + w - z & \cr dw & & dz
\end{bmatrix}.
$$

\subsection{Products}

Suppose that we have complexes $A^\dot$, $B^\dot$ and $C^\dot$ in ind-$\MHS_\F$.
Suppose that there is an associative product
$$
\cup : A^\dot \otimes B^\dot \to C^\dot
$$ 
which is a morphism in ind-$\MHS_\F$ and commutes with the differentials. For
each $t \in \F^\times$, define a product
$$
\cup_t : A_\cD^\dot \otimes B^\dot_\cD \to C^\dot_\cD
$$
by
$$
\begin{bmatrix}
& c' & \cr w' & & z'
\end{bmatrix}
\otimes
\begin{bmatrix}
& c'' & \cr w'' & & z''
\end{bmatrix}
\mapsto
\begin{bmatrix}
& c_t & \cr w'\cup w'' & & z' \cup z''
\end{bmatrix}
$$
where
$$
c_t = (1-t)c_0 + t c_1
$$
and
$$
c_0 = c'\cup z'' + (-1)^{|w'|} w'\cup c''
\text{ and }
c_1 = c'\cup w'' + (-1)^{|z'|} z'\cup c''.
$$
Each $\cup_t$ commutes with the differentials and is chain homotopic to $c_0$,
so that all $\cup_t$ induce the same map on cohomology.

\begin{proposition}
The natural isomorphism (\ref{eqn:nat_isom}) is compatible with products.
\end{proposition}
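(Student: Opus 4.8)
The plan is to verify the compatibility directly on the explicit cochain representatives produced in the proof of Theorem~\ref{thm:del_is_ext}, and to identify the representative of a product of extensions with the cup product $\cup_t$ of the representatives of its factors. Recall that to a class in $\Ext^m_{\HRep(G)}(\F,V_1)$ the construction assigns a triple $(\phi^{\prime\DR},\phi^{\prime\betti},s')$, where $\phi^{\prime\DR}$ and $\phi^{\prime\betti}$ are chain maps from the extension into the standard relatively injective resolution $V_1\to V_1\otimes\cO(G)^{\otimes\dot}$ lying respectively in $F^0W_0$ and $W_0$, and $s'$ is a $W_0$ homotopy with $\delta s'=\phi^{\prime\DR}-\phi^{\prime\betti}$; the associated Deligne cocycle is $\left[\begin{smallmatrix}&c'&\\ w'&&z'\end{smallmatrix}\right]$ with $w'=\phi^{\prime\DR}_m$, $z'=\phi^{\prime\betti}_m$ and $c'=s'_m$. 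Fix analogous data $(\phi^{\prime\prime\DR},\phi^{\prime\prime\betti},s'')$ and $(w'',z'',c'')$ for a class in $\Ext^n_{\HRep(G)}(\F,V_2)$.

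First I would construct cochain data for the product extension. By the standard correspondence between the product on $\Ext$ (induced by tensor product, realizing (\ref{eqn:prod}) on cohomology) and the cup product of representing chain maps, the product class is represented by $\Phi^\DR=\phi^{\prime\DR}\cup\phi^{\prime\prime\DR}$ and $\Phi^\betti=\phi^{\prime\betti}\cup\phi^{\prime\prime\betti}$, where the cup product is the Alexander--Whitney product $\cC^\dot(G,V_1)\otimes\cC^\dot(G,V_2)\to\cC^\dot(G,V_1\otimes V_2)$ inducing (\ref{eqn:prod}); here the tensor product of the two standard resolutions is again a relatively injective resolution of $V_1\otimes V_2$ (a tensor product of relatively injective $G$-modules is relatively injective), so Proposition~\ref{prop:qism} applies. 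Since $\cup$ is a morphism of ind-$\MHS_\F$ it preserves the filtrations, so $\Phi^\DR\in F^0W_0$ and $\Phi^\betti\in W_0$, and their top-degree parts are exactly $w'\cup w''$ and $z'\cup z''$, matching the $w$- and $z$-entries of the $\cup_t$ formula.

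The crux is the homotopy entry. Since $\delta$ acts as a graded derivation on $\cup$ and $\delta\phi=0$ for each chain map, the difference $\Phi^\DR-\Phi^\betti$ is a coboundary in two ways,
$$
\Phi^\DR-\Phi^\betti
= \delta\big(s'\cup\phi^{\prime\prime\DR}+\phi^{\prime\betti}\cup s''\big)
= \delta\big(s'\cup\phi^{\prime\prime\betti}+\phi^{\prime\DR}\cup s''\big),
$$
exhibiting two $W_0$ homotopies for $\Phi^\DR-\Phi^\betti$ whose top-degree parts are, up to the Koszul signs built into the definition of $\cup_t$, the cochains $c_1=c'\cup w''+(-1)^{|z'|}z'\cup c''$ and $c_0=c'\cup z''+(-1)^{|w'|}w'\cup c''$. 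Hence the triple built from the first homotopy is simultaneously the representative assigned to the product by (\ref{eqn:nat_isom}) and the cocycle $\cup_1$ of the two factor representatives, while the second gives $\cup_0$. Because these two homotopies differ by a $W_0$ coboundary---equivalently, all $\cup_t$ are chain homotopic, as recorded just after the definition of $\cup_t$---and because the class of the representative is independent of the choices of $\phi^\DR$, $\phi^\betti$ and $s$ by the homotopy invariance established in the proof of Theorem~\ref{thm:del_is_ext}, the image of the product of the two Ext classes coincides with the $\cup_t$-product of their images. This is the asserted compatibility.

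The hard part will be the sign- and filtration-bookkeeping compressed into the previous paragraph. One must verify that the Koszul signs produced by the graded-derivation property of $\cup$, by the internal differential of the standard resolution, and by the differential of the cone combine to reproduce exactly the signs $(-1)^{|w'|}$ and $(-1)^{|z'|}$ appearing in $c_0$ and $c_1$, rather than unsigned or otherwise signed variants, and that $\Phi^\DR$, $\Phi^\betti$ and both homotopies genuinely land in the prescribed filtration levels $F^0W_0$ and $W_0$. The filtration claim is immediate from the strictness of $F^\dot$ and $W_\dot$ furnished by Lemma~\ref{lem:lifts} together with the fact that $\cup$ preserves these filtrations; once the sign reconciliation is carried out, the homotopy invariance of $\cup_t$ closes the argument.
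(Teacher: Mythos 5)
Your proposal is correct in substance, and its computational core coincides with the paper's: both compute with the tensor product $\tot(I_1^\dot\otimes I_2^\dot)$ of the standard resolutions (legitimate by Proposition~\ref{prop:qism}), both manufacture filtered lifting data for the product class out of the data $(\phi^\DR,\phi^\betti,s)$ chosen for the two factors, and both identify the resulting Deligne cocycle with a $\cup_t$-product of the factor cocycles, concluding by the homotopy-independence established in Theorem~\ref{thm:del_is_ext}. The one real difference is how the product extension is handled. The paper takes the Yoneda product literally as the splice (the bottom and right-hand edges of Figure~\ref{fig:diag}) and constructs the chain maps and homotopy for the splice explicitly and piecewise; the homotopy it writes down, $\sigma_k\otimes\psi_n^\betti+(-1)^k\phi_k^\DR\otimes\tau_n$, is exactly your second choice $s'\cup\phi^{\prime\prime\betti}+\phi^{\prime\DR}\cup s''$ arranged along the splice, so the output is precisely the $\cup_0$-cocycle. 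You instead lift along the tensor product $\tot(E_1^\dot\otimes E_2^\dot)$ of the extension complexes using tensor products of chain maps, with the two derivation-identity homotopies producing the $\cup_1$- and $\cup_0$-cocycles. This works, but the phrase ``the standard correspondence'' compresses two facts that your route genuinely needs: (i) the splice and $\tot(E_1^\dot\otimes E_2^\dot)$ are equivalent Yoneda extensions, with the equivalence realized by morphisms in $\HRep(G)$ (not merely in $\Rep(G)$) --- this holds because the comparison maps are built from $\eta_1\otimes\id$ and the inclusions $E_1^k\otimes\F\hookrightarrow\tot^{n+k}$, all morphisms of MHS; and (ii) the map (\ref{eqn:nat_isom}) takes equal values on equivalent extensions, which is part of its well-definedness on Ext classes. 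Both are true (and (i) is classical over a field), so this is a gap of citation rather than of substance; the paper's Figure~\ref{fig:diag} is exactly the device that avoids invoking either. One small correction: your $\Phi^\DR=\phi^{\prime\DR}\cup\phi^{\prime\prime\DR}$ is not an Alexander--Whitney product of bar cochains, since the $\phi$'s are chain maps of complexes rather than cochains; it is the tensor product of chain maps with values in $\tot(I_1^\dot\otimes I_2^\dot)$, whose top-degree component corresponds to the cochain product $w'\cup w''$ --- which is in fact how you use it, so no harm results.
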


\begin{proof}
Suppose that
\begin{equation}
\tag{$\be_j$}
0 \to V_j \overset{\eta_j}{\longrightarrow} E_j^0 \longrightarrow \dots
\longrightarrow E_j^{n_j-1} \longrightarrow F \longrightarrow 0,
\quad j = 1,2
\end{equation}
are two Yoneda extensions in $\HRep(G)$ with $n_j>0$. Let
$$
0 \to V_j \to I_j^\dot, \qquad j = 1,2
$$
be the standard resolution of $V_j$ in $\Rep(G)$. These are relatively injective
resolutions. Their tensor product
$$
0 \to V_1 \otimes V_2 \to \tot (I_1^\dot \otimes I_2^\dot)
$$
is a resolution of $V_1\otimes V_2$ in $\HRep(G)$. Since $\cO^{\otimes k}$ is
an injective object of $\Rep(G)$ for all $k > 0$, each
$$
I_1^j\otimes I_2^k \cong V_1\otimes V_2 \otimes \cO^{\otimes(j+k+2)}
$$
is injective in $\Rep(G)$. It follows that
$
I^\dot := \tot (I_1^\dot \otimes I_2^\dot)
$
is a relatively injective resolution of $V_1\otimes V_2$ in $\HRep(G)$ and that
it can be used to compute $H^\dot_\cD(G,V_1\otimes V_2)$.

For convenience we set $m=n_1$ and $n=n_2$. Let
$$
(\phi^\DR_j,\ \phi^\betti_j,\sigma_j),\qquad j=0,\dots,m
$$
be two chain maps and a homotopy between them associated to the extension
($\be_1$) as in the proof of Theorem~\ref{thm:del_is_ext}. Let 
$$
(\psi^\DR_k,\ \psi^\betti_k,\ \tau_k),\qquad k=0,\dots,n
$$
the corresponding chain maps and homotopy associated to ($\be_2$). Note that
$\sigma_0 = \tau_0 = 0$. The extensions ($\be_1$) and ($\be_2$) are represented
by the Deligne cocycles
\begin{equation}
\label{eqn:cocycles}
\begin{bmatrix}
& \sigma_m \cr \phi^\DR_m && \phi^\betti_m
\end{bmatrix}
\text{ and }
\begin{bmatrix}
& \tau_n \cr \psi^\DR_n && \psi^\betti_n
\end{bmatrix}
\end{equation}
respectively.

\def\aruu{{\ar@{.>}[u]\ar@<0.75ex>@{.>}[u]}}  
\def\arll{{\ar@{.>}[l]\ar@<0.75ex>@{.>}[l]}}  

\begin{figure}[ht!]
{\tiny
$$
\xymatrix@C=20pt@R=20pt{
&&&&& &0
\cr
&& &\ar[r] & I_1^m\otimes I_2^{n-1} \ar[u] \ar[r] & I_1^m\otimes I_2^n \ar[u]
 & \F \otimes \F \arll \ar[u] \ar@{.>}[dl] \ar@/_1pc/@{.>}[ll]
\cr
&&& \ar[r] & I_1^{m-1}\otimes I_2^{n-1} \ar[u] \ar[r] &
I^{m-1}_1\otimes I_2^n  \ar[u]  &
E_1^{m-1}\otimes \F \arll\ar[u] \ar@/_1pc/@{.>}[ll]
\cr
&& \vdots & \vdots & \vdots & \vdots & \vdots
\cr
&&&& I_1^1\otimes I_2^{n-1}  \ar[r] & I_1^1\otimes I_2^n  &
E_1^1\otimes \F \arll \ar@{.>}[dl]^(.4)0 \ar@/_1pc/@{.>}[ll]
\cr
& V_1\otimes V_2 \ar[r]^{\eta_1\otimes\eta_2} &
I_1^0\otimes I_2^0 \ar[r]\ar[u] &
I_1^0\otimes I_2^1\ar[u] & 
I_1^0\otimes I_2^{n-1} \ar[u] \ar[r] & I_1^0\otimes I_2^n \ar[u] &
E_1^0\otimes \F \arll \ar[u] \ar@/_1pc/@{.>}[ll]
\cr
0 \ar[r] &V_1\otimes V_2 \ar[r]\ar@{=}[u] &
V_1\otimes E_2^0 \ar[r] \aruu \ar@{.>}[ul]^0 &
V_1\otimes E_2^1 \aruu \ar@{.>}[ul] &
V_1 \otimes E_2^{n-1} \ar@{->>}[r]\aruu &
V_1 \otimes \F \aruu\ar@{^{(}->}[ur] \ar@{.>}[ul] 
}
$$
}
\caption{}
\label{fig:diag}
\end{figure}

We need to show that the Deligne cohomology class that corresponds to their
Yoneda product is a cup product of these cocycles. To this end, consider the
diagram in Figure~\ref{fig:diag}. The bottom and right hand edges form the
Yoneda product of ($\be_1$) and $(\be_2)$. We have to compute the corresponding
chain maps $\Phi^\DR_\dot$, $\Phi_\dot^\betti$ and the homotopy $s_\dot$ for
their Yoneda product.

For $?\in\{\DR,B\}$, set
$$
\Phi^?_j =
\begin{cases}
\eta_1\otimes \psi_j^? & j=0,\dots,n-1,\cr
\phi_k^?\otimes \psi_n^? & j=n+k,\ k = 0,\dots,m.
\end{cases}
$$
Both are $G$-invariant chain maps to $\tot (I_1^\dot\otimes I_2^\dot)$. Clearly
$\Phi^\DR_\dot$ preserves the Hodge and weight filtrations, while
$\Phi^\betti_\dot$ is defined over $\F$ and preserves the weight filtration. A
chain homotopy between them is defined by
$$
s_j =
\begin{cases}
\eta_1 \otimes \tau_j & j = 0,\dots,n-1,\cr
\sigma_k \otimes \psi^\betti_n + (-1)^k \phi_k^\DR\otimes \tau_n &
j = n+k,\ k = 0,\dots,m.
\end{cases}
$$
Note that the restriction of $s_n : E_1^0 \otimes \F \to I^1_0\otimes I_2^{n-1}$
to $V_1\otimes \F$ is $\eta_1\otimes \tau_n$ as the restriction of $\phi_0^\DR$
to $V_1$ is $\eta_1$ since $\sigma_0 = 0$.

The Yoneda product of ($\be_1$) and ($\be_2$) is thus represented by the
Deligne cocycle
$$
\begin{bmatrix}
& \sigma_m\otimes \psi^\betti_n + (-1)^m\phi^\DR_m\otimes \tau_n \cr
\phi^\DR_m\otimes \psi^\DR_n && \phi^B_m \otimes \psi^B_n
\end{bmatrix}.
$$
This is the $\cup_0$ product of the cocycles (\ref{eqn:cocycles}) that
represent ($\be_1$) and ($\be_2$).
\end{proof}

\subsection{Natural hypotheses}
\label{sec:nat-hypoth}

The MHSs on the affine groups $G$ that arise in practice (e.g., in the next
section) satisfy $W_{-1}\cO(G) = 0$. In this case, $W_0\cO(G)$ is a Hopf
subalgebra. Set $\Gbar = \Spec W_0 \cO(G)$. This is a quotient of $G$.

\begin{lemma}
The kernel of the quotient homomorphism $G \to \Gbar$ is prounipotent. Denote it
by $W_{-1}G$. It has a natural weight filtration
$$
W_{-1}G \supseteq W_{-2}G \supseteq W_{-3}G \supseteq \cdots
$$
which is a descending central series.
\end{lemma}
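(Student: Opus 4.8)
The plan is to study the kernel $N := W_{-1}G$ through its coordinate ring and its Lie algebra, exploiting throughout that every structure map lives in ind-$\MHS_\F$ and that $\cO(G)$ has weights $\ge 0$.

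First I would identify $\cO(N)$ and pin down its weights. Writing $\epsilon\colon\cO(G)\to\F$ for the augmentation, $I=\ker\epsilon$, and $I_0 = I\cap W_0\cO(G)$ for the augmentation ideal of the Hopf subalgebra $W_0\cO(G)$, the kernel $N$ of $G\to\Gbar=\Spec W_0\cO(G)$ has coordinate ring $\cO(N)=\cO(G)/J$, where $J=I_0\cdot\cO(G)$ is the Hopf ideal generated by $I_0$. Since multiplication and the weight filtration are morphisms in ind-$\MHS_\F$, $J$ is a sub-ind-MHS and $\cO(N)$ inherits a quotient ind-MHS making it a Hopf algebra in ind-$\MHS_\F$. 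The key computation is that $W_0\cO(N)=\F\cdot 1$: taking the second tensor factor to be $1$ shows $I_0\subseteq J$, so the augmentation ideal $I_N$ of $\cO(N)$ satisfies $W_0 I_N = 0$; that is, $\cO(N)$ has strictly positive weights on $I_N$ and $\Gr^W_0\cO(N)=\F$.

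Next I would deduce that $N$ is prounipotent. A proalgebraic $\F$-group in characteristic zero is prounipotent exactly when the coradical of its coordinate Hopf algebra is $\F\cdot 1$, equivalently when every finite-dimensional representation is unipotent. I would obtain this from the weight bound of the previous step: the weight filtration is a coalgebra filtration, so passing to $\Gr^W\cO(N)$ gives a connected (i.e.\ $\Gr^W_0=\F$), non-negatively graded commutative Hopf algebra, which is necessarily the ring of functions on a prounipotent group; moreover every simple subcoalgebra of $\cO(N)$ is a sub-ind-MHS on which comultiplication is strict, forcing it into $W_0\cO(N)=\F\cdot 1$, so the coradical is trivial. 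I expect this to be the main obstacle: the clean inputs about weights have to be converted into the Hopf-theoretic statement that the coradical is trivial, and care is needed because $\cO(N)$ is only an ind-object and the weight filtration is a filtration rather than a grading.

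Finally, with $N$ prounipotent I would build the weight filtration on the group from its Lie algebra. Set $\mathfrak n=\Lie N$. Dualizing $W_0 I_N=0$ shows the cotangent space $I_N/I_N^2$ has weights $\ge 1$, hence $\mathfrak n$ has weights $\le -1$, i.e.\ $\mathfrak n = W_{-1}\mathfrak n$. I define $W_{-n}G:=\exp(W_{-n}\mathfrak n)$ for $n\ge 1$; since $N$ is prounipotent, $\exp$ is an isomorphism of schemes onto $N$ and these are closed subgroups. The adjoint action $\g\otimes\mathfrak n\to\mathfrak n$ and the bracket $\mathfrak n\otimes\mathfrak n\to\mathfrak n$ are morphisms in ind-$\MHS_\F$, so they are strict and satisfy $[W_a,W_b]\subseteq W_{a+b}$. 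Because $\cO(G)$ has weights $\ge 0$ we have $\g=W_0\g$, so $[\g,W_{-n}\mathfrak n]\subseteq W_{-n}\mathfrak n$; thus each $W_{-n}\mathfrak n$ is an ideal of $\g$, each $W_{-n}G$ is normal in $G$, and $[W_{-1}\mathfrak n,W_{-n}\mathfrak n]\subseteq W_{-n-1}\mathfrak n$ shows, after exponentiating, that $W_{-1}G\supseteq W_{-2}G\supseteq\cdots$ is a descending central series. Separatedness of the weight filtration guarantees the intersection is trivial, completing the verification.
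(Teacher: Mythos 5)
Your proposal takes essentially the same route as the paper on the prounipotence claim: identify $\cO(W_{-1}G)$ with $\cO(G)/(W_0\m_G)$, note that this quotient inherits an ind-MHS in which the augmentation ideal has weights $\ge 1$, and conclude that a commutative Hopf algebra which is ``connected'' in this sense is the coordinate ring of a prounipotent group. Where you go beyond the paper is in your last paragraph: the paper's written proof stops after prounipotence and never constructs the subgroups $W_{-n}G$ or checks the central series property, whereas you define $W_{-n}G = \exp(W_{-n}\mathfrak{n})$ and deduce $(W_{-1}G,W_{-n}G)\subseteq W_{-n-1}G$ from $[W_{-1}\mathfrak{n},W_{-n}\mathfrak{n}]\subseteq W_{-n-1}\mathfrak{n}$ together with Baker--Campbell--Hausdorff. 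That part is correct and supplies exactly what the paper leaves implicit. (One quibble: passing from ``$W_{-n}\mathfrak{n}$ is an ideal of $\g$'' to ``$W_{-n}G$ is normal in $G$'' requires $G$ connected; but normality in $G$ is not asserted in the lemma, and it can in any case be recovered by observing that the conjugation coaction $\cO(W_{-1}G)\to\cO(G)\otimes\cO(W_{-1}G)$ is a morphism of ind-MHS.)

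The one step that fails as written is the assertion that every simple subcoalgebra of $\cO(N)$ is a sub-ind-MHS. Nothing about an abstract simple subcoalgebra forces compatibility with the weight filtration, so this cannot be assumed---it is only vacuously true after the lemma has been proved. Fortunately you do not need it. You have already established the correct input: $W_\dot$ is an exhaustive coalgebra filtration of $\cO(N)$ with $W_0\cO(N)=\F$. It is a standard fact from coalgebra theory that the coradical is contained in the degree-zero piece of any exhaustive coalgebra filtration; alternatively, argue directly that for $x\in W_n I_N$ the $n$-fold iterated reduced coproduct of $x$ lies in a sum of terms $W_{i_1}I_N\otimes\cdots\otimes W_{i_{n+1}}I_N$ with every $i_j\ge 1$ and $i_1+\cdots+i_{n+1}\le n$, hence vanishes, so $\cO(N)$ is conilpotent and $N$ is prounipotent. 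With either repair your second step is rigorous (and is at the same level of appeal to standard Hopf-algebra facts as the paper's own phrase ``a finitely generated connected Hopf algebra is the ring of functions on a unipotent group''), and your proof as a whole then establishes more of the lemma's statement than the proof given in the paper.
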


\begin{proof}
Denote the maximal ideal in $\cO(G)$ of functions that vanish at the identity by
$\m_G$. The maximal ideal of the identity of $\Gbar$ in $\cO(\Gbar) = W_0\cO(G)$
is $W_0\m_G$. The coordinate ring of the kernel $W_{-1}G$ of $G \to \Gbar$ is
thus $\cO(G)/(W_0 \m_G)$.

Since $\cO(G)$ is a Hopf algebra in the category of ind-objects of $\MHS_\F$,
$\cO(W_{-1}G)$ inherits a natural ind-MHS. In particular, it has a weight
filtration $W_\dot$ which is preserved by the product and coproduct. Using the
unit and counit, we can write it as
$$
\cO(W_{-1}G) = F \oplus W_1\cO(W_{-1}G),
$$
so that it is connected. Since a finitely generated connected Hopf algebra is
the ring of functions on a unipotent group, and since $\cO(W_{-1}G)$ is a direct
limit of finitely generated (necessarily connected) Hopf algebras, $W_{-1}G$ is
prounipotent.
\end{proof}

\section{Categories of Variations of MHS}
\label{sec:vmhs}

\subsection{Relative completion}
\label{sec:rel_comp}

Here we recall Deligne's notion of the relative unipotent completion of a
discrete group. Suppose that $\G$ is a discrete group, that $\F$ is a field of
characteristic zero, that $R$ is a reductive $\F$-group and that $\rho : \G \to
R(\F)$ is a Zariski dense representation. The completion of $G$ relative to
$\rho$ is an affine $\F$-group $\cG$ that is an
extension
$$
1 \to \U \to \cG \to R \to 1
$$
of $R$ by a prounipotent group. It can be defined as the tannakian fundamental
group of the category $\cR(\G,\rho)$ of finite dimensional $F[\G]$-modules $V$
that admit a filtration
$$
0 = V_0 \subset V_1 \subset \cdots \subset V_{N-1} \subset V_N = V
$$
by $\G$-submodules such that the action of $\G$ on each $V_m/V_{m-1}$ factors
through a representation of $R$ via $\rho$. There is a canonical representation
$\G \to \cG(\F)$ whose composition with the quotient mapping $\cG(\F) \to R(\F)$
is $\rho$.

\subsection{Hodge theory}

In the remainder of this section, $\F$ will be a subfield of $\R$. Suppose that
$X$ is a smooth quasi-projective variety over $\C$ and that $\H$ is a
semi-simple variation of $\F$-MHS over $X$. That is, it is a direct sum of
polarized variations of $\F$-Hodge structure (PVHS) whose local monodromy
operators are quasi-unipotent.

Fix a base point $x\in X$. Denote the fiber of the variation of mixed Hodge
structure $\V$ over $X$ over $x$ by $V_x$. Let $R_x$ be the closure of the
monodromy representation $\pi_1(X,x) \to \Aut(H_x)$. A theorem of Deligne
\cite[4.2.6]{deligne:hodge2} (see also \cite[7.2.5]{schmid}) implies that $R_x$
is a reductive $\F$-group. Let $\rho_x : \pi_1(X,x) \to R_x(\F)$ be the
corresponding representation. Denote the completion of $\pi_1(X,x)$ with respect
to $\rho_x$ by $\cG_x$. This group has a natural $\F$-MHS, which was constructed
in \cite{hain:malcev}. A more concrete description of it in the case where $X$
is an affine curve is given in \cite{hain:modular}.\footnote{A more concrete
description in the general case will be given in \cite{vmhs}.} It is an
extension
$$
1 \to \U_x \to \cG_x \to R_x \to 1
$$
where $\U_x = W_{-1}\cG_x$ is prounipotent.

Denote by $\MHS(X,\H)$ the category of admissible variations $\V$ of MHS over
$X$ whose weight graded quotients have the property that their monodromy factors
through a representation of $R_x$ via $\rho_x$. The fiber $V_x$ of such a local
system $\V$ is an object of the category $\cR(\pi_1(X,x),\rho_x)$ and thus a
representation of $\cG_x$. Its monodromy representation factors through this
homomorphism. That is, there is a homomorphism $\cG_x \to \Aut V_x$ such that
$$
\xymatrix{
\pi_1(X,x) \ar[r] & \cG_x(\F) \ar[r] & \Aut(V_x)
}
$$
is the monodromy representation.

\subsection{Relative completion of path torsors}

Suppose that $X$ is a locally simply connected topological space, that $R_x$ is
a reductive $\F$-group and that $\rho : \pi_1(X,x) \to R_x(\F)$ is a Zariski
dense representation. Denote by $\cR(X,\rho)$ the category of $\F$-local systems
$\V$ over $X$ whose monodromy representation $\pi_1(X,x) \to \Aut(V_x)$ is an
object of the category $\cR(\pi_1(X,x),\rho)$. This category is equivalent to
$\cR(\pi_1(X,x),\rho)$. For each $y\in X$ the functor
$$
\w_y : \cR(X,\rho) \to \Vec_\F
$$
that takes a local system $\V$ to the $\Q$-vector space that underlies its fiber
$V_y$ over $y$ is a fiber functor.

Denote by $\Pi_X(x,y)$ the set of homotopy classes of paths in $X$ from $x$ to
$y$. The completion of $\Pi_X(x,y)$ relative to $\rho$ is defined to be the
affine scheme
$$
\cG_{x,y} := \Isom^\otimes (\w_x,\w_y)
$$
of tensor isomorphisms of $\w_x$ and $\w_y$. When $x=y$, this is the completion
$\cG_x$ of $\pi_1(X,x)$ relative to $\rho$. There are natural functions
$\Pi_X(x,y) \to \cG_{x,y}(\F)$, which are Zariski dense, and morphisms
$\cG_{y,z} \times \cG_{x,y} \to \cG_{x,z}$ such that the diagram
$$
\xymatrix{
\Pi_X(y,z) \times \Pi_X(x,y) \ar[r]\ar[d] & \Pi_X(x,z)\ar[d] \cr
\cG_{y,z}(\F) \times \cG_{x,y}(\F) \ar[r] & \cG_{x,z}(\F)
}
$$
commutes.

\subsection{Categories of variations of MHS}

The following theorem generalizes the main result of \cite{hain-zucker}, which
is the unipotent case. This result will be proved in full generality in
\cite{vmhs}. The proof of the case where $X$ is an affine curve case was
sketched in \cite{hain:modular}.

\begin{theorem}
\label{thm:vmhs}
Suppose that $X$ is a smooth quasi-projective variety and that $x\in X$. If $\V$
is an object of $\MHS(X,\H)$, then the morphism
$$
V_x \to V_x \otimes \cO(\cG_x)
$$
corresponding to the homomorphism $\cG_x \to \Aut V_x$ is a morphism of MHS. The
corresponding functor
$$
\MHS(X,\H)\to \HRep(\cG_x)
$$
is an equivalence of categories.
\end{theorem}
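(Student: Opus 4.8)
The plan is to produce the functor $\Phi\colon\MHS(X,\H)\to\HRep(\cG_x)$ sending $\V$ to its fiber $V_x$, equipped with the fiber MHS and with the coaction $\nabla_{V_x}\colon V_x\to V_x\otimes\cO(\cG_x)$ that encodes the factorization of monodromy through $\cG_x$, and then to verify in turn that $\Phi$ is well defined, fully faithful, and essentially surjective. Because the fiber-at-$x$ functor is exact and monoidal on both sides, $\Phi$ will automatically be an exact tensor functor of neutral tannakian categories over $\MHS_\F$, so all of the content lies in these three properties. Well-definedness is precisely the first assertion of the theorem, and I would deduce it from the construction of the MHS on $\cG_x$ in \cite{hain:malcev}: the fiber MHS on $V_x$ comes from the admissibility of $\V$ (Steenbrink--Zucker \cite{steenbrink-zucker}, Kashiwara \cite{kashiwara}), while the ind-MHS on $\cO(\cG_x)$ is assembled from Zucker's mixed Hodge complex \cite{zucker} computing the cohomology of $X$ with coefficients in the tensor constructions on $\H$. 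The MHS on $\cO(\cG_x)$ is built exactly so that its tautological pairing against such variations is a morphism of MHS, which is equivalent to the statement that $\nabla_{V_x}$ respects $W_\dot$ and $F^\dot$.

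For full faithfulness, faithfulness is immediate: a morphism of $\MHS(X,\H)$ whose restriction to $V_x$ is zero is a flat morphism of local systems vanishing at a point, hence zero. For fullness, let $f\colon V_x\to V_x'$ be a $\cG_x$-equivariant morphism of MHS. Under the tannakian equivalence between $\cR(X,\rho_x)$ and the finite-dimensional $\cG_x$-representations, $f$ is promoted to a flat morphism of the underlying local systems; equivalently, it is a global flat section of the admissible variation $\mathcal{H}om(\V,\V')$ whose value in the fiber over $x$ is of type $(0,0)$. The theorem of the fixed part for admissible variations (Kashiwara \cite{kashiwara}, Saito \cite{saito:mhm}) then forces this section to be a morphism of variations of MHS, that is, a morphism in $\MHS(X,\H)$ lifting $f$.

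Essential surjectivity is the main obstacle, and I would reduce it to one geometric construction. Since $\Phi$ is an exact, fully faithful tensor functor, its essential image is a full subcategory of $\HRep(\cG_x)$ closed under subquotients, so it suffices to exhibit in the image a faithful finite-dimensional Hodge representation of each algebraic quotient of the proalgebraic group $\cG_x$, as such representations and their tensor constructions generate $\HRep(\cG_x)$. All of these live inside the coordinate ring, so I would construct the tautological ind-object $\mathcal{O}$ of $\MHS(X,\H)$ whose fiber over $x$ is $\cO(\cG_x)$ with its ind-MHS and whose underlying local system is that of the regular representation. Granting $\mathcal{O}$, every finite-dimensional Hodge subrepresentation of $\cO(\cG_x)$ lies in the essential image, and since the regular representation of each algebraic quotient of $\cG_x$ contains a faithful finite-dimensional subrepresentation, one obtains the required generators and concludes. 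The real work is the existence of $\mathcal{O}$: the assertion that the abstract MHS on $\cG_x$ is realized by a genuine admissible variation over $X$, together with the verification that the tensor and subquotient operations used stay within $\MHS(X,\H)$. This rests on the full strength of \cite{hain:malcev} and the admissibility theory of Kashiwara and Saito, and it is the step I expect to be hardest; it is where the bulk of \cite{vmhs} must be spent.
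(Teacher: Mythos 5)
The first thing to note is that the paper itself does not prove this statement for an ordinary base point $x\in X$: it explicitly defers the general case to \cite{vmhs}, citing \cite{hain:modular} for a sketch when $X$ is an affine curve, and the only argument actually given in the paper is the extension to tangential base points, which takes the ordinary case as input. Measured against the strategy that the tangential argument (and the paper's surrounding assertions) reveal, your skeleton is the right one: full faithfulness via the theorem of the fixed part applied to $\Hom_\F(\V,\V')$ (the paper uses exactly this device, applied to $\Hom_\F(\V,\V\otimes\bO)$), and essential surjectivity pivoting on the tautological ind-object $\bO$ of $\MHS(X,\H)$ whose fiber at $x$ is $\cO(\cG_x)$ --- the existence of which the paper also treats as the crux, citing \cite{hain:modular,vmhs}. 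You are also candid that this existence is where the real work lies.

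However, two steps you present as established are genuine gaps. First, your well-definedness argument is circular: \cite{hain:malcev} constructs the MHS on $\cO(\cG_x)$ from mixed Hodge complexes built out of $\H$ and its tensor constructions, but it does not prove that the coaction $V_x \to V_x\otimes\cO(\cG_x)$ of an \emph{arbitrary} object of $\MHS(X,\H)$ is a morphism of MHS; that compatibility is precisely the first assertion of Theorem~\ref{thm:vmhs}, i.e., the thing to be proven, and is exactly what is deferred to \cite{vmhs}. Saying the MHS on $\cO(\cG_x)$ was ``built so that'' this holds assumes the conclusion. Second, in the essential surjectivity step you claim that the essential image of an exact fully faithful functor between abelian categories is automatically closed under subquotients. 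This is false as a categorical statement (a full abelian subcategory with exact inclusion need not be closed under subobjects), and in the present setting the fact you need --- that a $\cG_x$-stable sub-MHS of $V_x$ underlies an admissible sub-variation of $\V$ --- is itself a theorem of essentially the same depth as the one being proved. The paper's own (tangential) argument sidesteps this issue: granting that $\bO$ is an ind-object of $\MHS(X,\H)$, it reconstructs the variation directly as the kernel of $\Delta_V\otimes 1 - 1\otimes\Delta : V\otimes\bO_\vv \to V\otimes\bO\otimes\bO_\vv$, so that $\V$ is cut out by a morphism of admissible ind-variations rather than obtained from an abstract closure property. If you want a clean reduction of essential surjectivity to the existence of $\bO$, that kernel construction, not subquotient closure of the image, is the argument to use.
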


The relevance of this result is that, combined with
Theorem~\ref{thm:del_is_ext},  it gives a computation of 
$\Ext^\dot_{\MHS(X,\H)}(\F,\V)$.

\begin{corollary}
\label{cor:vmhs}
If $\V$ is an object of $\MHS(X,\H)$, there is a natural isomorphism
$$
H^\dot_\cD(\cG_x,V_x) \cong \Ext^\dot_{\MHS(X,\H)}(\F,\V)
$$
which is compatible with products.
\end{corollary}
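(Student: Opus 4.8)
The final statement is Corollary~\ref{cor:vmhs}, which asserts a natural, product-compatible isomorphism
$$
H^\dot_\cD(\cG_x,V_x) \cong \Ext^\dot_{\MHS(X,\H)}(\F,\V).
$$
The plan is to obtain this by simply chaining together the two results that immediately precede it. First I would invoke Theorem~\ref{thm:vmhs}, which gives an equivalence of categories $\MHS(X,\H) \to \HRep(\cG_x)$ sending $\V$ to the Hodge representation $V_x$ (and $\F$ to $\F$, i.e.\ $\F(0)$ to the trivial Hodge representation). Since an equivalence of abelian categories induces isomorphisms on all Yoneda Ext groups, this yields
$$
\Ext^\dot_{\MHS(X,\H)}(\F,\V) \overset{\simeq}{\longrightarrow} \Ext^\dot_{\HRep(\cG_x)}(\F,V_x).
$$

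Second, I would apply Theorem~\ref{thm:del_is_ext} (with $G = \cG_x$, which is an affine group with an $\F$-MHS by the discussion in Section~\ref{sec:rel_comp}, and with $V = V_x$ in $\HRep(\cG_x)$), which supplies the natural isomorphism
$$
\Ext^\dot_{\HRep(\cG_x)}(\F,V_x) \overset{\simeq}{\longrightarrow} H^\dot_\cD(\cG_x,V_x).
$$
Composing the two displayed isomorphisms produces the desired isomorphism, and naturality in $\V$ follows from the naturality of each of the two constituent maps.

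For the product compatibility I would argue that both steps respect products. The equivalence of Theorem~\ref{thm:vmhs} is a tensor equivalence, so it carries the Yoneda product on $\Ext^\dot_{\MHS(X,\H)}$ to the Yoneda product on $\Ext^\dot_{\HRep(\cG_x)}$; here one uses that the equivalence sends $\V_1 \otimes \V_2$ to $V_{1,x} \otimes V_{2,x}$ compatibly. The second isomorphism is shown to be product-compatible in the Proposition immediately following Theorem~\ref{thm:del_is_ext} (the $\cup_0$-product computation), which identifies the Yoneda product in $\Ext^\dot_{\HRep(\cG_x)}$ with the cup product in $H^\dot_\cD$. Hence the composite respects products.

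The only genuine point requiring care—and the step I expect to be the main obstacle—is verifying that the tensor structure on $\MHS(X,\H)$ under the equivalence of Theorem~\ref{thm:vmhs} matches the tensor structure on $\HRep(\cG_x)$ used in the product computation, so that the two product-compatibility statements can be legitimately composed. Since Theorem~\ref{thm:vmhs} is stated as an equivalence of categories rather than explicitly as a tensor equivalence, I would need to observe that the functor ``restrict to the fiber over $x$'' is manifestly monoidal (the fiber of a tensor product of variations is the tensor product of fibers, compatibly with the $\cG_x$-actions), which upgrades the equivalence to a tensor equivalence and closes the argument.
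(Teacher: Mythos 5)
Your proposal is correct and is essentially the paper's own argument: the corollary is obtained precisely by combining the equivalence of categories of Theorem~\ref{thm:vmhs} with the isomorphism of Theorem~\ref{thm:del_is_ext}, with product compatibility supplied by the proposition following Theorem~\ref{thm:del_is_ext}. Your extra remark that the fiber functor is manifestly monoidal (so the equivalence respects tensor products and Yoneda products) is exactly the implicit point the paper relies on.
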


\subsection{Tangential base points}

Theorem~\ref{thm:vmhs} also holds when $x$ is a tangential base point. Because
the weight filtration is replaced by a relative weight filtration, which depends
on the tangent vector, additional explanation is required.

Suppose that $X = \Xbar - D$, where $\Xbar$ is smooth and projective and $D$ is
a normal crossing divisor in $\Xbar$. Suppose that $P\in D$. A tangent vector
$\vv \in T_P\Xbar$ at $P$ that is not tangent to any local component of $D$
determines an element $\sigma_\vv \in\pi_1(X,\vv)$. The class $\sigma_\vv$ is
the image of the positive generator of $\pi_1(\Delta^\ast)$ in $\pi_1(X,\vv)$
associated to any analytic arc $(\Delta,0) \hookrightarrow (\Xbar,P)$ that is
tangent to $\vv$.

The fiber $V_\vv$ of an admissible variation of MHS $\V$ over $X$ at $\vv$ is,
by definition, the limit MHS associated to $\vv$. The underlying complex vector
space is the fiber over $P$ of Deligne's canonical extension of $\V\otimes\cO_X$
to  $\Xbar$. Its rational structure is determined by $\vv$. The local monodromy
operator $h_\vv$ is the image of $\sigma_\vv$ in $\sigma_\vv$ in $\Aut V_\vv$.
It is quasi-unipotent. The local monodromy logarithm $N$ is defined by
$$
N= \frac{1}{k}\log T_\vv^k
$$
where $k>0$ and $T_\vv^k$ is unipotent. The limit MHS $V_\vv$ has two weight
filtrations: $W_\dot$, which is the restriction of the weight filtration of $\V$
to $V_\vv$, and the relative weight filtration $M_\dot$ associated to the local
monodromy logarithm $N:(V_\vv,W_\dot) \to (V_\vv,W_\dot)$. The limit MHS is
$(V_\vv,M_\dot,F^\dot)$, where $F^\dot$ is the limit Hodge filtration. The
filtration $W_\dot$ is a filtration of $V_\vv$ by the mixed Hodge structures
$(W_m V_\vv,F^\dot,M_\dot)$.

The family $\{\cO(\cG_x)\}_{x\in X}$ over $X$ underlies an ind-object of
$\MHS(X,\H)$, \cite{hain:modular,vmhs}. In particular, there is a limit MHS on
$\cO(\cG_\vv)$ with weight filtrations $W_\dot$ and $M_\dot$. These are
respected by the Hopf algebra operations and thus pass to filtrations on
$\cG_\vv$. There is a natural homomorphism
$$
\rho_\vv : \pi_1(X,\vv) \to \cG_\vv(\F).
$$
The universal monodromy logarithm $N_\vv \in \g_\vv$ is the logarithm of the
unipotent part of the Jordan decomposition of $\rho_\vv(\sigma_\vv)$. It
spans a copy of $\Q(1)$ in $\g_\vv$.

A Hodge representation of $\cG_\vv$ is a MHS $(V,F^\dot,M_\dot)$ where $V$ is
in $\Rep(\cG_\vv)$. The coaction
\begin{equation}
\label{eqn:coaction}
V \to V\otimes\cO(\cG_\vv)
\end{equation}
is required to be a morphism of MHS with respect to the weight filtration
$M_\dot$.

In general, the relative weight filtration $M_\dot$ of a filtration preserving
nilpotent endomorphism of a filtered vector space $(V,W_\dot)$ does not
determine the filtration $W_\dot$.\footnote{A simple example where $M_\dot$ and
$N$ do not determine $W_\dot$ is where $V$ is the first homology $H^1(E)$ of an
elliptic curve and $N$ is the monodromy logarithm associated to degenerating to
the nodal cubic. This satisfies $N\neq 0$ and $N^2=0$. The weight filtrations
are
$$
0= W_0 V \subset W_1 V = V \text{ and }
0 = M_{-1}V \subset M_0 V = M_1 V \subset M_2 V = V,
$$
where $M_0 V = \im N$. Apart from the original $W_\dot$, we can take $W_\dot$ to
be $M_\dot$. There is nothing special about this example as $M_\dot$ is always
the relative weight filtration of $N \in M_0\End(V,M_\dot)$.}
However, in the presence of a $\cG_\vv$-action, it does.

\begin{lemma}
\label{lem:wt_filt}
Each Hodge representation $(V,M_\dot)$ of $\cG_\vv$ has a unique filtration
$W_\dot$ that is defined over $\F$, preserved by $N$ and such that $M_\dot$ is
the relative weight filtration of $N \in W_0 \End V$. Moreover, each $W_m V$ is
a sub-MHS of $V$ and the monodromy representation $\cG_\vv \to \Aut V$ preserves
$W_\dot$.
\end{lemma}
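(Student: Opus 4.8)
The plan is to produce $W_\dot$ from the two weight filtrations that $\cG_\vv$ already carries, and then to show that the presence of the $\cG_\vv$-action removes the ambiguity illustrated in the footnote. Write $\U_\vv = W_{-1}\cG_\vv$ for the prounipotent radical and $R_\vv = \cG_\vv/\U_\vv$ for the reductive quotient, so that $\g_\vv$ carries the filtration $W_\dot\g_\vv$ with $\Gr^W_0\g_\vv = \Lie R_\vv$ and $W_{-1}\g_\vv$ pronilpotent, together with the relative weight filtration $M_\dot\g_\vv$, for which $N_\vv$ has weight $-2$ while $N_\vv\in W_0\g_\vv$. Existence and uniqueness will both follow once I exhibit a canonical grading of $V$ that splits $W_\dot$; the key observation is that this grading is supplied by the reductive part of the action, not by $N_\vv$ and $M_\dot$ alone.

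For existence I first split the weight filtration of the group. Since $\H=\bigoplus_w \H_w$ is a sum of polarized variations of Hodge structure of pure weights $w$, the fibre $\bigoplus_w H_{\vv,w}$ carries the central cocharacter $\zeta\colon\Gm\to\prod_w\Aut(H_{\vv,w})$ acting on $H_{\vv,w}$ by $\lambda^w$; because $R_\vv$ preserves each weight summand and its polarization, $\zeta$ commutes with $R_\vv$. Combining a Levi lift $R_\vv\hookrightarrow\cG_\vv$ coming from the MHS with $\zeta$, I obtain a cocharacter $\chi\colon\Gm\to\cGhat_\vv$ acting on $\Gr^W_n\g_\vv$ by $\lambda^n$, and I define $W_\dot V$ to be the filtration of $V$ by the weights of $\chi$. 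Since $\g_\vv$ has nonpositive $\chi$-weights, each $W_kV$ is a $\cG_\vv$-submodule, so the filtration is $\cG_\vv$-stable; it is defined over $\F$ because $\H$ and the chosen Levi are; and $N_\vv\in W_0\g_\vv$ preserves it. Finally, each $\Gr^W_m V$ is the limit MHS of a PVHS of weight $m$, so by the $\SL_2$-orbit theorem (equivalently, by the admissibility of the ind-variation $\{\cO(\cG_\vv)\}$ transported through the coaction $V\to V\otimes\cO(\cG_\vv)$) the operator $N_\vv$ induces on $\Gr^W_m V$ the monodromy weight filtration centred at $m$; this is exactly the statement that $M_\dot=M(N_\vv,W_\dot)$ and that each $W_mV$ is a sub-MHS.

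The crux is uniqueness, since the footnote shows that $N_\vv$ and $M_\dot$ by themselves admit several compatible filtrations. Let $W_\dot$ and $W'_\dot$ both satisfy the hypotheses. Splitting $M_\dot$ by the weight cocharacter of $\pi_1(\MHS_\F)$ exhibits $V=\bigoplus_m\Gr^M_m V$ as a representation of $R_\vv$, and by Schur's lemma $\zeta$ acts on each irreducible $R_\vv$-constituent by a single scalar $\lambda^{\mathrm{wt}}$; thus every constituent acquires a well-defined integer $\H$-weight. For either filtration, the requirement that $\Gr^W_m V$ be a weight-$m$ PVHS-limit forces its $R_\vv$-constituents to be exactly those of $\H$-weight $m$, while $M_\dot=M(N_\vv,-)$ fixes the position of each constituent along its $N_\vv$-string. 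These two data leave no freedom: they determine, constituent by constituent, the subspace $W_kV=\bigoplus_{m\le k}\Gr^W_mV$ inside $V$, whence $W_\dot=W'_\dot$. Equivalently, any two lifting cocharacters $\chi,\chi'$ differ by conjugation by $\U_\vv$ together with a central cocharacter of $R_\vv$ that $\zeta$ pins down, and such conjugation preserves the resulting filtration.

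The main obstacle is precisely this rigidity step, namely showing that the reductive action reconstructs $W_\dot$ from $M_\dot$ and $N_\vv$. The real work lies in reconciling the two gradings at play—the MHS weight grading that splits $M_\dot$ and the $\H$-weight grading furnished by $\zeta$—and in checking that their interaction with $N_\vv$ is rigid; this is exactly the point at which the footnote's linear-algebra ambiguity is eliminated by reductivity of $R_\vv$. Constructing the lifting cocharacter $\chi$ over $\F$ (the Levi splitting together with $\SL_2$-orbit control of the rational structure) is the second delicate point, but once $\chi$ is available and its filtration is seen to be canonical, both the displayed properties and the uniqueness assertion follow.
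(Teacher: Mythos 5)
Your proposal has a genuine gap, and it sits exactly where you locate the crux. The uniqueness step rests on the claim that, by Schur's lemma, every irreducible $R_\vv$-constituent of $V$ acquires a well-defined integer ``$\H$-weight'' from $\zeta$. This is false: $\zeta$ is a cocharacter of $\prod_w \Aut(H_{\vv,w})$, not of $R_\vv$, so it does not act on an abstract $R_\vv$-module at all; it acts only on modules presented inside tensor constructions of $H_\vv$, and isomorphic irreducible $R_\vv$-modules occur there with \emph{different} $\zeta$-weights. For instance, with $R_\vv = \SL(H)$ and $H$ of weight $1$, the trivial module occurs both as $S^0H=\F$ (with $\zeta$-weight $0$) and as $\Lambda^2 H \cong \F(-1)$ (with $\zeta$-weight $2$). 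The weight of a constituent is precisely the datum to be determined, and it is not a function of the $R_\vv$-module structure: this is exactly the ambiguity of the paper's footnote, and reductivity by itself does not remove it. The setup of this step is also flawed: splitting $M_\dot$ does not exhibit $V=\bigoplus_m \Gr^M_m V$ as a representation of $R_\vv$, because $R_\vv$ does not preserve $M_\dot$ (already for $\H$ itself in the modular case, $\SL_2$ does not preserve the line $M_0 H_\vv = \F\aa$); the coaction being a morphism of MHS does not mean individual group elements are filtered maps.

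The existence half has the same problem in different clothing. The cocharacter $\chi$ is never constructed: the cocharacters actually available split $M_\dot$ (the weight cocharacter of $\pi_1(\MHS_\F)\subset\cGhat_\vv$) or act only on $H_\vv$ (your $\zeta$, which lies in no natural way in $\cGhat_\vv$), and neither splits the sought-for $W_\dot$ on an arbitrary Hodge representation; producing a $\chi$ whose weight filtration on every $V$ has the stated properties is at least as strong as the lemma itself. Even granting $\chi$, the key assertion --- that each $\Gr^W_m V$ is a weight-$m$ PVHS-limit, so that $M_\dot$ is the relative weight filtration of $N$ --- is exactly what must be proved, and you cannot obtain it from admissibility of a variation attached to $V$: Lemma~\ref{lem:wt_filt} is what the paper uses to prove Theorem~\ref{thm:vmhs} at tangential base points, so that appeal is circular. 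The paper's mechanism, which has no counterpart in your argument, is the interplay between strictness of the coaction $V \to V\otimes\cO(R)$ (a morphism of MHS with respect to $M_\dot$) and Schmid's $\sl_2\subset\r_\R$ with semisimple element $h_0$ splitting $M_\dot$ on $\r$ and on $\cO(R)$: together they force the cyclic $R$-submodule generated by an $h_0$-eigenvector $v\in (M_kV\setminus M_{k-1}V)\cap V_r$ to have $N$-weight filtration centered at $k-r$, and simplicity of $V$ as a \emph{Hodge} representation (not as an $R$-module) then yields a single center $m_V$; the general case follows by induction on $\U$-invariants, where $R$-invariance (hence $\sl_2$-invariance) of the map $\Gr^W_\ell\u\otimes W_k(V/W_mV^\U)^\U \to W_mV^\U$ forces $k=m-\ell>m$. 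Some such computation of the center of the $N$-string from the Hodge-theoretic data is unavoidable, and it is missing from your proof.
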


\begin{proof}[Sketch of proof]
The coordinate rings $\{\cO(R_x)\}$ of the reductive quotients of the $\cG_x$
form an ind-PVHS over $X$. Denote its fiber over $\vv$ by $\cO(R_\vv)$ and the
corresponding group by $R_\vv$. It is the reductive quotient of $\cG_\vv$. To
simplify notation, we set $\cG = \cG_\vv$ and $R=R_\vv$. Denote the kernel of
the projection $\cG \to R$ by $\U$. It is prounipotent.

We first prove the result in the case where $V$ is an irreducible Hodge
representation of $R$. Note that $V$ is not necessarily irreducible as an
$R$-module. Recall that its weight filtration is denoted by $M_\dot$ and that
$$
N : (V,M_\dot) \to (V,M_\dot)(-1)
$$
is a morphism of MHS. We have to show that there is a unique $m\in \Z$ such that
$M_\dot$ is the weight filtration of $N : V \to V$ shifted by $m$.

Denote the Lie algebra of $R$ by $\r$. Since $\r$ has weight $0$, its relative
weight filtration $M_\dot$ with respect to $N$ is the weight filtration of $N$
and is centered about $0$.  Schmid's work \cite{schmid} implies that there is a
copy of $\sl_2$ in $\r_\R$ that contains $N$ and which has a semi-simple element
$h_0$ whose eigenspaces split the relative weight filtration of $\r_\R$.
Likewise, the relative weight filtration $M_\dot$ of $\cO(R)$ is centered about
0, is the weight filtration of $N$ and is split by the eigenspaces of $h_0$.

The restriction of the monodromy action to $\sl_2$ defines an $\sl_2$ action on
$V_\R$. Decompose $V$ into its $h_0$ eigenspaces:
$$
V_\R = \bigoplus V_r.
$$
Since the coaction $V \to V\otimes\cO(R)$ is a morphism of MHS (with respect to
the weight filtration $M_\dot$), it is strict with respect to $M_\dot$. Since it
is also $\sl_2$-equivariant, it respects the grading by $h_0$ weight.

Say that an $R$-submodule $A$ of $V_\R$ has weight $m_A$ if
$$
N^k : \Gr^M_{m_A+k} A \to \Gr^M_{m_A-k} A
$$
is an isomorphism for all $k$. The weight $m_A$, if it exists, is unique. The
discussion in the previous paragraph implies that every cyclic submodule $A$ of
$V_\R$ that is generated by an $h_0$ eigenvector
$$
v\in (M_k A \setminus M_{k-1}A)\cap V_r
$$
has weight $m_A = k-r$.

Let $\X$ be the set of $R$-submodules of $V_\R$ that have a weight. It is clear
that if $B$ is a non-zero submodule of $A\in \X$, then $B\in \X$ and $m_B=m_A$.
This implies that if $A,B\in \X$ and $A\cap B \neq 0$, then $A+B \in \X$ and
$m_{A+B}=m_A=m_B$. Since $\X$ contains all cyclic modules of the type discussed
above, and since $V$ is a simple Hodge representation of $R$, this implies that
$V\in \X$. The weight filtration
$$
0 = W_{m-1}V \subset W_m V = V,
$$
where $m=m_V$, is defined over $\F$, preserved by $N$, and has the property that
$M_\dot$ is the relative weight filtration of $N\in W_0\End V$. The monodromy
representation $V \to V\otimes \cO(\cG)$ respects $W_\dot$. It is also a
filtration by MHS.

We now consider the general case. Suppose that $V$ is a $\cG$-module. If $\U$
acts trivially, then we are in the semi-simple case above, and so have a weight
filtration. If not, then $\U$ acts unipotently, and so $V^\U\neq 0$. This is an
$R$-module, defined over $\F$, and thus has a weight filtration $W_\dot$ by the
semi-simple case above. Since $V^\U$ is defined over $\F$, the filtration
$W_\dot$ is also defined over $\F$.

Let $m$ be the smallest integer such that $W_m V^\U \neq 0$. This is a sub-MHS
of $V$. The quotient $V/W_m V^\U$ is a Hodge representation of $\cG$ and
$(V/W_m V^\U)^\U$ is a Hodge representation of $R$. Let $k$ be the smallest
integer such that $W_k(V/W_m V^\U)^\U \neq 0$. To complete the proof, it
suffices to show that $m < k$. For if this holds, one can define $W_k V$ to
be the inverse image of $W_k(V/W_m V^\U)^\U$ in $V$. One can then continue
inductively to define the $W_\dot$ filtration of $V$.

To prove that $k>m$, observe that monodromy homomorphism $\U\times V \to V$
induces a homomorphism
\begin{equation}
\label{eqn:gr_monod}
H_1(\u) \otimes W_k(V/W_m V^\U)^\U \to W_m V^\U.
\end{equation}
If the action is trivial, then $W_k(V/W_m V^\U)^\U = \Gr^W_k V^\U$, which
implies that $m<k$. If the action is non-trivial, there is a least $\ell$ such
that the restriction of (\ref{eqn:gr_monod}) to $W_\ell H_1(\u)$ is non-trivial.
The induced map
$$
\Gr^W_\ell \u  \otimes W_k(V/W_m V^\U)^\U \to W_m V^\U
$$
is $R$-invariant, and thus $\sl_2$-invariant. It follows that $k+\ell = m$.
Since $\ell<0$, we have $k>m$. This completes the proof of the existence of a
weight filtration $W_\dot$ with the property that $M_\dot$ is the relative
weight filtration of $N\in W_0\End V$ and where each $W_m V$ is a sub-MHS.
The construction implies that each $W_mV$ is a $\cG$-submodule of $V$ and that
the monodromy representation $\u \times V \to V$ preserves $W_\dot$. Together
these imply that $\cG \times V \to V$ does too.
\end{proof}

\begin{proof}[Proof of Theorem~\ref{thm:vmhs} for tangential base points]
First suppose that $\V$ is an object of $\MHS(X,\H)$ with fiber
$(V,F^\dot,W_\dot,M_\dot)$ over $\vv$. Denote by $\bO$ the local system over $X$
whose fiber over $x\in X$ is $\cO(\cG_x)$. The monodromy coactions
$$
V_x \to V_x \otimes \cO(\cG_x)
$$
give a flat section of the variation $\Hom_\F(\V,\V\otimes\bO)$.
Theorem~\ref{thm:vmhs} implies that this section is in
$$
\G H^0(X,\Hom_\F(\V,\V\otimes\bO)).
$$
The Theorem of the Fixed Part implies that
$$
\G H^0(X,\Hom_\F(\V,\V\otimes\bO))
= F^0W_0 \Hom_{\cG_b}(V_b,V_b\otimes\cO(\cG_b))
$$
for all base points $b$ of $X$, tangential or standard. This implies that the
fiber $V_\vv$ of $\V$ over $\vv$ is in $\HRep(\cG_\vv)$.

Now suppose that $V$ is in $\HRep(\cG_\vv)$. Let $\V$ be the corresponding $\F$
local system over $X$. We have to prove that $\V$ underlies an admissible
variation of MHS whose fiber over $\vv$ is $V$. Lemma~\ref{lem:wt_filt} implies
that $V$ has a weight filtration $W_\dot$ by $\cG_\vv$ submodules such that
$M_\dot$ is the relative weight filtration of $N\in W_0 \End(V)$. This implies
that the local system $\V$ has a filtration $W_\dot\V$ by local systems. Denote
the fiber of $\V$ over $x\in X$ (possibly tangential) by $V_x$. We have to
construct a MHS on each $V_x$.

To do this, we use the the local system  $\bO_\vv$ over $X$ whose fiber over $x$
is $\cO(\cG_{x,\vv})$. This is an ind-object of $\MHS(X;\H)$. The key
observation is that for all $x$, the image of the coaction
$$
V_x \hookrightarrow V \otimes \cO_{x,\vv}
$$
is a mixed Hodge structure. This is true because the image of $V_x$ in
$V\otimes\cO_{\vv,x}$ is the kernel of
$$
\Delta_V\otimes 1 - 1\otimes \Delta :
V\otimes \cO_{\vv,x} \to V \otimes \cO_{\vv,\vv}\otimes \cO_{\vv,x},
$$
which has a MHS as both $\Delta_V$ and $\Delta$ are morphisms of MHS. From this
it follows that $\V$ is isomorphic to the kernel of
$$
\Delta_V\otimes 1 - 1\otimes \Delta :
V\otimes \bO_{\vv} \to V \otimes \bO\otimes \bO_{\vv}
$$
and is therefore an admissible variation of MHS.
\end{proof}

\section{The Natural Homomorphism}
\label{sec:nat_homom}

A homomorphism $\G \to G(\F)$ from a discrete group $\G$ into the $\F$-points of
an affine $\F$-group $G$ induces a functor $\Rep(G) \to \Rep_\F(\G)$ and
therefore a homomorphism
$$
H^\dot(G,V) \to H^\dot(\G,V)
$$
for each $V$ in $\Rep(G)$. When $\G$ is the fundamental group of a (sufficiently
nice) topological space $X$ and $\V$ is a locally constant sheaf over $X$
corresponding to the $\G$-module $V$, there are natural homomorphisms
$$
H^\dot(G,V) \to H^\dot(\G,V) \to H^\dot(X,\V).
$$

Now suppose that $\cG$ is the completion of $\G$ relative to $\rho : \G \to
R(\F)$. The following result is standard. (Cf.\ \cite{hain:malcev}.)

\begin{proposition}
\label{prop:low_deg}
For all $R$-modules $V$, the natural homomorphism
\begin{equation}
\label{eqn:nat_homom}
H^\dot(\cG,V) \to H^\dot(\G,V)
\end{equation}
is an isomorphism in degrees $\le 1$ and injective in degree $2$. If $\G$ is
free, then (\ref{eqn:nat_homom}) is an isomorphism in all degrees.
\end{proposition}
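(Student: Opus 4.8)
The plan is to reinterpret both sides through Tannaka duality and reduce the statement to a comparison of Yoneda Ext groups in an extension-closed subcategory. First I would reduce to the case that $V$ is finite dimensional, since $V$ is the filtered union of its finite-dimensional $R$-submodules and both cohomologies, together with the comparison map, are compatible with the resulting direct limit. By the definition of relative completion, $\Rep^\fte(\cG)$ is equivalent to $\cR(\G,\rho)$, and by the isomorphism (\ref{eqn:rep_f}) one has $H^\dot(\cG,V) = \Ext^\dot_{\Rep(\cG)}(\F,V) \cong \Ext^\dot_{\cR(\G,\rho)}(\F,V)$ for finite-dimensional $V$. Under this identification the map (\ref{eqn:nat_homom}) becomes precisely the map on Yoneda Ext induced by the fully faithful inclusion $\iota : \cR(\G,\rho) \hookrightarrow \Rep_\F(\G)$ into the category of all $\F[\G]$-modules, in which $H^\dot(\G,V) = \Ext^\dot_{\Rep_\F(\G)}(\F,V)$.

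The first part of the proposition is then an instance of a standard fact about Yoneda Ext. I would verify that $\cR(\G,\rho)$ is a full abelian subcategory of $\Rep_\F(\G)$, with exact inclusion $\iota$, that is closed under extensions and subquotients. Closure under extensions is immediate: concatenating the given $R$-filtrations of a submodule and a quotient produces an $R$-filtration of the total module. Closure under subobjects and quotients uses the Zariski density of $\rho(\G)$ in $R$: a $\G$-submodule of an $R$-module is stabilized by $\rho(\G)$, hence, the stabilizer being Zariski closed, by all of $R$, so it is an $R$-submodule, and intersecting an $R$-filtration with it shows it lies in $\cR(\G,\rho)$. For a full, exact, extension-closed abelian subcategory the comparison map $\Ext^n_{\cR(\G,\rho)}(\F,V) \to \Ext^n_{\Rep_\F(\G)}(\F,V)$ is an isomorphism for $n\le 1$ and a monomorphism for $n=2$; this gives the isomorphism in degrees $\le 1$ and injectivity in degree $2$. (Degree $0$ is also visible directly: $H^0(\cG,V) = V^R = V^{\rho(\G)} = V^\G = H^0(\G,V)$ by density.)

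For the free case it remains to show $H^n(\cG,V) = 0$ for $n\ge 2$, since a free group has cohomological dimension one and hence $H^n(\G,V)=0$ there. When $\G$ is free, $H^2(\G,V)=0$, so the degree-$2$ injectivity just established forces $H^2(\cG,V)=0$ for every $R$-module $V$. By Corollary~\ref{cor:coho}, $H^\dot(\cG,V)\cong H^\dot(\U,V)^R$, and since $\U$ is prounipotent this is $H^\dot(\u,V)^R$ with $\u=\Lie\U$; as $\U$ acts trivially on the $R$-module $V$, the Chevalley--Eilenberg complex factors and this is $\big(H^\dot(\u,\F)\otimes V\big)^R$. The vanishing $\big(H^2(\u,\F)\otimes V\big)^R \cong \Hom_R(\dual V,H^2(\u,\F)) = 0$ for all finite-dimensional $R$-modules $V$, together with the reductivity of $R$, forces $H^2(\u,\F)=0$, i.e. $H_2(\u)=0$. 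A pronilpotent Lie algebra with vanishing $H_2$ is free, and a free Lie algebra has cohomological dimension one; hence $H^n(\u,V)=0$, and therefore $H^n(\cG,V)=0$, for all $n\ge 2$ and all $V$. Combined with the first part, (\ref{eqn:nat_homom}) is an isomorphism in every degree.

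The main obstacle is the free case, and specifically its two homological inputs: extracting $H^2(\u,\F)=0$ from the degree-two injectivity, where the reductivity of $R$ and the triviality of the $\U$-action on $V$ are both essential, and the Lie-algebra freeness criterion that $H_2=0$ implies freeness and hence cohomological dimension one. The degree $\le 1$ isomorphism and the degree $2$ injectivity are by contrast formal, once $\cR(\G,\rho)$ has been identified as an extension-closed subcategory of $\Rep_\F(\G)$.
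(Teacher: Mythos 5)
The paper does not actually prove this proposition; it is labelled ``standard'' and deferred to \cite{hain:malcev}, so your argument can only be compared with the standard one. On those terms your proposal is correct and, for the first assertion, is essentially that standard argument: identify $H^\dot(\cG,V)$, for finite-dimensional $V$, with $\Ext^\dot_{\cR(\G,\rho)}(\F,V)$ via Tannaka duality and (\ref{eqn:rep_f}); check, using Zariski density of $\rho(\G)$, that $\cR(\G,\rho)$ is a full abelian subcategory of $\Rep_\F(\G)$ closed under subquotients and extensions; and invoke the general fact that for such a subcategory the map on Yoneda $\Ext$ is an isomorphism in degrees $\le 1$ and injective in degree $2$. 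Your treatment of the free case is a genuine (and attractive) variant: the usual route is to show directly that the prounipotent radical $\U$ of $\cG$ is free prounipotent, whereas you reverse the implication, using the degree-$2$ injectivity already established together with $H^2(\G,V)=0$ to get $(H^2_\cts(\u,\F)\otimes V)^R=0$ for all finite-dimensional $V$, hence $H^2_\cts(\u,\F)=0$, and then quoting the criterion that a pronilpotent Lie algebra with vanishing $H^2_\cts$ is free, together with cohomological dimension one for free pronilpotent Lie algebras. The ingredients you need (Corollary~\ref{cor:coho}, the isomorphism $H^\dot(\U,V)\cong H^\dot_\cts(\u,V)$ for prounipotent $\U$, and the two Lie-theoretic facts) are all available in or consistent with the paper, and the logic is not circular.

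The step you should not pass over silently is the initial reduction to finite-dimensional $V$. Cohomology of the affine group $\cG$ does commute with filtered colimits of coefficients, but cohomology of a discrete group does not in general: one needs $\G$ finitely generated for degrees $\le 1$, and of type $FP_2$ (finitely presented suffices) for the degree-$2$ injectivity to pass to the limit. This is not a pedantic point, because without some such hypothesis the proposition itself is false: take $\G$ free of countably infinite rank, $R$ trivial, and $V=\bigoplus_{i\ge 1}\F$ with trivial action; then $H^1(\cG,V)$ consists only of those homomorphisms $\G\to V$ whose image spans a finite-dimensional subspace, while $H^1(\G,V)=\Hom(\G,V)$, so (\ref{eqn:nat_homom}) is not surjective. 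In the paper $\G=\pi_1(X,x)$ is finitely presented, so the implicit hypothesis is harmless, but your proof should state it. Alternatively, for finitely generated $\G$ you can avoid the limit argument entirely: the category $\Rep(\cG)$ of all $\cG$-modules (equivalently, locally finite $\G$-modules all of whose finite-dimensional submodules lie in $\cR(\G,\rho)$) is then itself closed under extensions in the category of all $\F[\G]$-modules, and your subcategory lemma applies to it directly.
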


Applying this to the situation where $X$ is a smooth complex algebraic variety,
$\H$ is an $\F$-PVHS over $X$ and $\cG_x$ is the corresponding relative
completion of $\pi_1(X,x)$, we see that for each $\V$ in $\MHS_\F(X,\H)$, there
is a natural homomorphism
\begin{equation}
\label{eqn:morphism}
H^\dot(\cG_x,V_x) \to H^\dot(X,\V).
\end{equation}
It is compatible with products.

\begin{proposition}
\label{prop:nat_homom}
For all objects $\V$ of $\VMHS(X,\H)$, the natural homomorphism
(\ref{eqn:morphism}) is a morphism of MHS. If $X$ is an affine curve, it is
an isomorphism.
\end{proposition}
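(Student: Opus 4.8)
The plan is to prove the two assertions of Proposition~\ref{prop:nat_homom} separately, relying on the compatibility of the relative completion with Hodge theory. First I would address the claim that (\ref{eqn:morphism}) is a morphism of MHS. The source $H^\dot(\cG_x,V_x)$ carries the ind-MHS of Proposition~\ref{prop:coho_mhs}, built from the standard cochain complex $\cC^\dot(\cG_x,V_x) = \Hom_{\cG_x}(\F, V_x\otimes\cO(\cG_x)^{\otimes\dot})$, while the target $H^\dot(X,\V)$ carries Saito's MHS coming from a mixed Hodge complex computing the cohomology of the admissible variation $\V$. The natural homomorphism is induced by the Zariski-dense map $\pi_1(X,x) \to \cG_x(\F)$, which by Theorem~\ref{thm:vmhs} realizes $V_x$ as a Hodge representation and realizes $\cO(\cG_x)$ as (the fiber at $x$ of) an ind-object of $\MHS(X,\H)$. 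The key point is that the whole cochain complex $\cC^\dot(\cG_x,V_x)$ can itself be interpreted, term by term, as the fiber of a complex of admissible variations over $X$: the simplicial scheme $E_\dot\cG_x$ and its coordinate rings underlie ind-variations, so $V_x\otimes\cO(\cG_x)^{\otimes n}$ is the fiber of an admissible variation whose cohomology maps to $H^\dot(X,\blank)$ in a manner functorial for the bar differential. Comparing the two computations then exhibits the map on cohomology as induced by a morphism of mixed Hodge complexes, hence a morphism of MHS.

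For the second assertion, suppose $X$ is an affine curve. The cleanest route is to combine Proposition~\ref{prop:low_deg} with the cohomological dimension of $X$. Since $X$ is an affine curve, $\pi_1(X,x)$ is a free group, so Proposition~\ref{prop:low_deg} gives that $H^\dot(\cG_x, V_x) \to H^\dot(\pi_1(X,x), V_x)$ is an isomorphism in all degrees. It then remains to show that the second map $H^\dot(\pi_1(X,x),\V) \to H^\dot(X,\V)$ in the factorization
$$
H^\dot(\cG_x, V_x) \to H^\dot(\pi_1(X,x), V_x) \to H^\dot(X,\V)
$$
is an isomorphism. But $X$ is a $K(\pi,1)$: an affine curve is homotopy equivalent to a wedge of circles, so its homotopy type is that of a classifying space $B\pi_1(X,x)$, and therefore the comparison between group cohomology of $\pi_1(X,x)$ and sheaf cohomology of the local system $\V$ is an isomorphism in every degree. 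Composing the two isomorphisms gives that (\ref{eqn:morphism}) is an isomorphism, as claimed.

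The main obstacle is the first assertion, specifically the verification that the comparison map is a morphism of \emph{Hodge} structures rather than merely a map of underlying vector spaces that happens to be functorial. The subtlety is that the MHS on $H^\dot(\cG_x, V_x)$ is defined via the semidirect product $\cGhat_x = \pi_1(\MHS_\F)\ltimes\cG_x$ and the auxiliary resolution of Proposition~\ref{prop:qism}, whereas Saito's MHS on $H^\dot(X,\V)$ is defined through an entirely different mechanism (a mixed Hodge complex or mixed Hodge module on $X$). To bridge these one must produce a single object—realistically, a morphism of mixed Hodge complexes—whose realization on each side recovers the respective MHS and whose induced map on cohomology is (\ref{eqn:morphism}). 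The natural candidate uses the ind-variation $\bO$ over $X$ with fiber $\cO(\cG_x)$ from the tangential-base-point discussion: the bar complex $V_x\otimes\cO(\cG_x)^{\otimes\dot}$ is the fiber of a complex of admissible variations, and applying Saito's functor (taking global cohomology of admissible variations, with its MHS) to this complex yields a double complex whose totalization computes $H^\dot(X,\V)$ while its ``fiber at $x$'' recovers the group cochains. Checking that the two spectral sequences—one computing $H^\dot(\cG_x,V_x)$ via invariants, the other computing $H^\dot(X,\V)$ via Saito's theory—are compatible in the category of MHS, and that the edge maps are strict, is where the real work lies; strictness and the degeneration of the relevant weight spectral sequences should follow from the polarizability and admissibility hypotheses, but assembling this carefully is the crux of the argument.
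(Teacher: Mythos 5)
Your architecture is the same as the paper's: lift the group cochain complex to a complex of admissible variations over $X$ (you use the bar resolution $V_x\otimes\cO(\cG_x)^{\otimes\dot}$; the paper uses an arbitrary relatively injective resolution $V_x\to I^\dot$ in $\HRep(\cG_x)$, of which the bar resolution is an instance), transport it through the equivalence of Theorem~\ref{thm:vmhs} to a resolution $\V\to\I^\dot$ in ind-$\MHS(X,\H)$, apply Saito's mixed Hodge complex $K^\dot(\Xbar,D;\blank)$, and exhibit (\ref{eqn:morphism}) as induced by morphisms of mixed Hodge complexes. Your treatment of the affine curve case is exactly the paper's: Proposition~\ref{prop:low_deg} for free groups together with the fact that an affine curve is a $K(\pi,1)$ for a free group, so both maps in the factorization $H^\dot(\cG_x,V_x)\to H^\dot(\pi_1(X,x),V_x)\to H^\dot(X,\V)$ are isomorphisms. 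That half is complete and correct.

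The gap is precisely the step you flag as ``the crux'' and leave unassembled. The missing idea is not a strictness/degeneration analysis of two spectral sequences; it is the Theorem of the Fixed Part. The paper argues as follows: since each $\I^n$ is an admissible variation, the fixed part theorem shows that $H^0(X,\I^\dot)=(I^\dot)^{\cG_x}$ is itself a mixed Hodge complex and that its inclusion into $\tot K^\dot(\Xbar,D;\I^\dot)$ is a \emph{morphism} of mixed Hodge complexes. Combined with the quasi-isomorphism of mixed Hodge complexes $K^\dot(\Xbar,D;\V)\to\tot K^\dot(\Xbar,D;\I^\dot)$ (coming from $\V\to\I^\dot$ being a resolution) and the isomorphisms $H^\dot(\cG_x,V_x)\cong H^\dot\big((I^\dot)^{\cG_x}\big)$ of Proposition~\ref{prop:qism}, this exhibits (\ref{eqn:morphism}) as induced entirely by morphisms of mixed Hodge complexes, so it is automatically a morphism of MHS: compatibility of weight and Hodge filtrations, strictness, and so on are built into that formalism and never need to be verified by hand. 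So your plan is the right one, but the single tool that converts your acknowledged ``real work'' into a short argument---the Theorem of the Fixed Part applied to the complex of admissible variations $\I^\dot$---is absent from your proposal, and without it the first assertion is not proved.
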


\begin{proof}
The group $H^\dot(X,\V)$ can be computed by taking an injective resolution $V_x
\hookrightarrow I^\dot$ of the fiber $V_x$ over $x$ in $\Rep(\cG_x)$, taking the
corresponding resolution $\V \hookrightarrow \I^\dot$ of $\V$, and then taking
the cohomology of the total complex of the double complex
$$
E^{\dot}(X,\I^\dot) := \bigoplus_{m\ge 0} E^\dot(X,\I^m)
$$
where $E^\dot(X,\I^m)$ denotes the complex of smooth forms with coefficients in
$\I^m$. The homomorphism (\ref{eqn:morphism}) is induced by the inclusion
$(I^\dot)^{\cG_x} \hookrightarrow E^0(X,\I^\dot) \hookrightarrow
E^{\dot}(X,\I^\dot)$. To prove the result, we need to realize this construction
using mixed Hodge complexes.

Choose a relatively injective resolution $V_x \to I^\dot$ of $V_x$ in
$\HRep(\cG_x)$. Theorem~\ref{thm:vmhs} implies that this corresponds to a
resolution $\V \to \I^\dot$ in ind-$\MHS(X,\H)$ with fiber $I^\dot$ over $x$.

Write $X=\Xbar-D$, where $\Xbar$ is smooth and complete, and where $D$ is a
divisor with normal crossings. Denote by $K^\dot(\Xbar,D;\bA)$ Saito's mixed
Hodge complex \cite{saito:mhc} that computes the MHS on $H^\dot(X,\bA)$, where
$\bA$ is an admissible variation of MHS over $X$. When $X$ is a curve, one can
use Zucker's MHC \cite{zucker} instead.

The inclusion $\V \hookrightarrow \I^\dot$ induces a morphism of mixed Hodge
complexes
$$
K^\dot(\Xbar,D;\V) \to \tot K^\dot(\Xbar,D;\I^\dot)
$$
and thus an isomorphism of MHS on cohomology. It is a quasi-isomorphism as $\V
\to \I^\dot$ is a resolution. The theorem of the fixed part implies that
$H^0(X,\I^\dot)$ is a mixed Hodge complex and that its inclusion into $\tot
K^\dot(\Xbar,D;\I^\dot)$ is a morphism. This inclusion, coupled with the natural
isomorphisms
$$
H^\dot(\cG_x,V_x) \cong H^\dot((I^\dot)^{\cG_x}) \cong H^\dot(H^0(X,\I^\dot))
$$
and
$$
H^\dot(X,\V) \cong H^\dot(K^\dot(\Xbar,D;\V))
\cong H^\dot(\tot K^\dot(\Xbar,D;\I^\dot)),
$$
induces the natural morphism (\ref{eqn:morphism}). It is a morphism of MHS as it
is induced by a morphism of mixed Hodge complexes.

The last assertion follows from Proposition~\ref{prop:low_deg} as every affine
curve is a model of the classifying space of a free group.
\end{proof}

\section{Comparison with Deligne--Beilinson Cohomology}
\label{sec:db_coho}

As in the previous section, $X$ is the complement of a normal crossing divisor
$D$ in a smooth projective variety $\Xbar$. Saito's mixed Hodge complex
\cite{saito:mhc} with coefficients in an admissible variation of MHS $\V$ over
$X$ will be denoted by $K^\dot(\Xbar,D;\V)$.

Suppose that $(K^\dot,W_\dot)$ is a filtered complex, where $W_\dot$ is
increasing. Recall from \cite[1.3.3]{deligne:hodge2} that the filtered complex
$\Dec_W K^\dot$ is the same complex $K^\dot$ endowed with the shifted filtration
({\em filtration decal\'ee}) defined by
$$
W_m \Dec_W K^j = \{k \in W_{m+j} K^j : dk \in W_{m+j+1}K^{j+1}\}.
$$
The reason for shifting the filtration $W_\dot$ is that if $K^\dot$ is a mixed
Hodge complex, then the inclusion $W_m \Dec_W K^\dot \hookrightarrow K^\dot$
induces isomorphisms
$$
H^\dot(W_m\Dec_W K^\dot) \overset{\simeq}{\longrightarrow} W_m H^\dot(K^\dot)
,\quad
H^\dot(F^pW_m\Dec_W K^\dot) \overset{\simeq}{\longrightarrow}
F^pW_m H^\dot(K^\dot).
$$

\begin{definition}
The Deligne--Beilinson (DB) cohomology $H^\dot_\cD(X,\V)$ is defined to be the
cohomology of the complex
\begin{multline}
\label{eqn:db_coho_def}
\cone\big(F^0W_0\Dec_W K^\dot_\C(\Xbar,D;\V) \oplus
W_0\Dec_W K^\dot_\Q(\Xbar,D;\V)
\cr
\to W_0\Dec_W K^\dot_\C(\Xbar,D;\V)\big)[-1].
\end{multline}
\end{definition}

Standard arguments imply that it is independent of the choice of the
compactification $\Xbar$ and that it can be expressed as the extension
\begin{equation}
\label{eqn:db-ses}
0 \to \Ext^1_{\MHS_\F}\big(\Q,H^{j-1}(X,\V)\big) \to H_\cD^j(X,\V)
\to \G H^j(X,\V) \to 0.
\end{equation}

\subsection{Comparison of DB-cohomologies}
\label{sec:comparison}

Suppose that $\H$ is an $\F$-PVHS over $X$ and that $\cG_x$ is the corresponding
relative completion of $\pi_1(X,x)$.

\begin{theorem}
\label{thm:comp}
For all objects $\V$ of $\VMHS(X,\H)$, there is a natural homomorphism
$$
H^\dot_\cD(\cG_x,V_x) \to H^\dot_\cD(X,\V)
$$
which respects the products and whose degree $j$ part fits into a commutative
diagram
$$
\xymatrix{
0 \ar[r] & \Ext^1_\MHS(\F,H^{j-1}(\cG_x,V_x)) \ar[r]\ar[d] &
H_\cD^j(\cG_x,V_x) \ar[r]\ar[d] & \G H^j(\cG_x,V_x) \ar[r]\ar[d] & 0 \cr
0 \ar[r] & \Ext^1_\MHS(\F,H^{j-1}(X,\V)) \ar[r] &
H_\cD^j(X,\V) \ar[r] & \G H^j(X,\V) \ar[r] & 0
}
$$
where the right and left hand vertical maps are induced by the natural
homomorphism (\ref{eqn:morphism}).
\end{theorem}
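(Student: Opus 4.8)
The plan is to construct $\theta$ as the map induced on Beilinson cones by the morphism of mixed Hodge complexes already produced in the proof of Proposition~\ref{prop:nat_homom}, and to read off the diagram from the naturality of the defining extensions. First I would fix a relatively injective resolution $V_x \to I^\dot$ of $V_x$ in $\HRep(\cG_x)$; by Theorem~\ref{thm:vmhs} it corresponds to a resolution $\V \to \I^\dot$ in ind-$\MHS(X,\H)$ with fiber $I^\dot$ over $x$. The proof of Proposition~\ref{prop:nat_homom} then supplies a quasi-isomorphism of mixed Hodge complexes $K^\dot(\Xbar,D;\V) \to \tot K^\dot(\Xbar,D;\I^\dot)$ together with a morphism of mixed Hodge complexes
$$
\iota : (I^\dot)^{\cG_x} = H^0(X,\I^\dot) \hookrightarrow \tot K^\dot(\Xbar,D;\I^\dot)
$$
inducing the natural homomorphism (\ref{eqn:morphism}) on cohomology. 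By Proposition~\ref{prop:qism} the complex $(I^\dot)^{\cG_x}$ computes $H^\dot_\cD(\cG_x,V_x)$ through its (décalage-free) Beilinson cone.

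Next I would record that the assignment
$$
M^\dot \longmapsto \cone\big(F^0W_0\Dec_W M^\dot_\C \oplus W_0\Dec_W M^\dot_\F \to W_0\Dec_W M^\dot_\C\big)[-1]
$$
is functorial in the mixed Hodge complex $M^\dot$, carries filtered quasi-isomorphisms to quasi-isomorphisms, and has cohomology sitting in the extension (\ref{eqn:db-ses}) naturally in $M^\dot$. The one genuine discrepancy to reconcile is that the group-side cone carries no décalage while the variety-side cone does. Here I would use that $(I^\dot)^{\cG_x}$ is a complex in $\MHS_\F$ whose weights are already ``correct'': since $W_\dot$ and $F^\dot$ are exact on $\MHS_\F$ they commute with cohomology, so $W_mH^\dot = H^\dot(W_m)$ and $F^pW_mH^\dot = H^\dot(F^pW_m)$. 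The inclusion $W_0 (I^\dot)^{\cG_x} \hookrightarrow W_0\Dec_W (I^\dot)^{\cG_x}$ (and its $F^0W_0$ analogue) is then a quasi-isomorphism, by the defining property of $\Dec_W$ recalled in Section~\ref{sec:db_coho}, so the décalage-free cone of $(I^\dot)^{\cG_x}$ and the cone of $\Dec_W(I^\dot)^{\cG_x}$ both compute $H^\dot_\cD(\cG_x,V_x)$.

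With this in hand, applying the functorial $\Dec_W$ to $\iota$ gives a morphism of the two mixed Hodge complexes defining the two DB-cohomologies, and taking Beilinson cones yields
$$
\theta : H^\dot_\cD(\cG_x,V_x) \to H^\dot_\cD(X,\V).
$$
The commutative diagram is then immediate from the naturality of (\ref{eqn:db-ses}): $\theta$ lies over the map $H^\dot(\cG_x,V_x)\to H^\dot(X,\V)$ induced by $\iota$ on cohomology, which by Proposition~\ref{prop:nat_homom} is exactly (\ref{eqn:morphism}) and is a morphism of MHS; hence the outer vertical arrows, gotten by applying $\Ext^1_\MHS(\F,\blank)$ and $\G(\blank)$ to this map, are induced by (\ref{eqn:morphism}). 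For product compatibility I would choose the resolutions multiplicatively, exactly as in the proof of the product compatibility of (\ref{eqn:nat_isom}): the tensor product of standard resolutions is a relatively injective resolution of $V_1\otimes V_2$, so $\iota$ may be taken compatible with cup products, and the $\cup_t$ products on Beilinson cones then transport the product compatibility of (\ref{eqn:morphism}) to $\theta$.

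The hard part will be the décalage bookkeeping of the second paragraph: one must verify that $\iota$, which a priori lands in $\tot K^\dot(\Xbar,D;\I^\dot)$ rather than in its décalée filtration, is compatible with the subcomplexes $W_0\Dec_W$ and $F^0W_0\Dec_W$ entering the two cones, and that the resulting class is independent of the chosen resolution. This rests on the strictness recorded in Lemma~\ref{lem:lifts} together with the standard fact that $\Dec_W$ preserves filtered quasi-isomorphisms of mixed Hodge complexes, but it is where essentially all of the technical content lies.
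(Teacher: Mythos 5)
Your skeleton matches the paper's proof: fix a relatively injective resolution $V_x\to I^\dot$ in $\HRep(\cG_x)$, pass via Theorem~\ref{thm:vmhs} to a resolution $\V\to\I^\dot$ in ind-$\MHS(X,\H)$, and use the inclusion $\iota : (I^\dot)^{\cG_x}=H^0(X,\I^\dot)\hookrightarrow \tot K^\dot(\Xbar,D;\I^\dot)$ to induce a map of Beilinson cones, reading off the diagram from naturality of (\ref{eqn:db-ses}) and products from multiplicative resolutions. But your resolution of the d\'ecalage discrepancy --- which you rightly identify as the crux --- is wrong. For a complex of mixed Hodge structures such as $(I^\dot)^{\cG_x}$, whose differentials preserve the (unshifted) weight filtration, one has
$$
W_m\Dec_W\big((I^\dot)^{\cG_x}\big)^j
=\{k\in W_{m+j}(I^j)^{\cG_x} : dk\in W_{m+j+1}(I^{j+1})^{\cG_x}\}
= W_{m+j}(I^j)^{\cG_x},
$$
the condition on $dk$ being automatic. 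So $W_0\Dec_W(I^\dot)^{\cG_x}$ is $W_j(I^j)^{\cG_x}$ in degree $j$, and the inclusion $W_0(I^\dot)^{\cG_x}\hookrightarrow W_0\Dec_W(I^\dot)^{\cG_x}$ is \emph{not} a quasi-isomorphism: already for a complex concentrated in degree $1$ with $K^1$ pure of weight $1$ and zero differential, the left-hand side is $0$ while the right-hand side is $K^1$. The ``defining property of $\Dec_W$'' recalled in Section~\ref{sec:db_coho} is a property of mixed Hodge complexes in Deligne's sense, where $\Gr^W_m K^j$ has weight $m+j$; a complex of MHS is not of this type --- which is precisely why the group-side cone (\ref{eqn:dbcoho-group}) carries no d\'ecalage in the first place. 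Since $H^1(\cG_x,V_x)$ genuinely has classes of positive weight (for instance, in Section~\ref{sec:brown} the Eisenstein class in $H^1(\cG,S^{2n}H)$ has weight $4n+2$ and the cuspidal classes have weight $2n+1$), the cone of $\Dec_W(I^\dot)^{\cG_x}$ computes the wrong group, so the source of your $\theta$ is not $H^\dot_\cD(\cG_x,V_x)$.

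The paper's fix goes the other way: leave the source untouched, and apply $\Dec_W$ to the target only in the $K^\dot$-degree, i.e.\ define $\Dec_W\tot K^\dot(\Xbar,D;\I^\dot):=\bigoplus_{n\ge 0}\Dec_W K^\dot(\Xbar,D;\I^n)$, ignoring the resolution degree. Then $\iota$ preserves $W_\dot$ and $F^\dot$ on the nose, for a reason that has nothing to do with Lemma~\ref{lem:lifts}: the image of $(I^n)^{\cG_x}$ consists of flat sections, which sit in $K$-degree $0$ and are killed by the de~Rham differential, so the d\'ecalage condition $dk\in W_{m+1}$ is vacuous on them, and by the theorem of the fixed part the flat sections lying in $W_m K^0(\Xbar,D;\I^n)$ are exactly $W_m$ of the MHS on $H^0(X,\I^n)$. (Note that the total-degree convention for $\Dec_W$ on the target would also fail: a flat section of weight $\le m$ in resolution degree $n$ lands only in $W_{m-n}$ of the total-degree d\'ecal\'ee filtration, so $\iota$ would shift filtrations.) With this convention the chain map from (\ref{eqn:dbcoho-group}) to (\ref{eqn:db_coho_def}) exists with no correction on the source side, and the rest of your argument --- naturality of the two extensions over the morphism (\ref{eqn:morphism}), and product compatibility via tensor products of standard resolutions --- goes through as you describe.
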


\begin{proof}
Choose a relatively injective resolution $V_x \hookrightarrow I^\dot$ of $V_x$
in $\HRep(\cG_x)$. The corresponding resolution of $\V$ in $\MHS(X,\H)$ will be
denoted by $\V \hookrightarrow \I^\dot$.

Define
$$
\Dec_W \tot K^\dot(\Xbar,D;\I^\dot) =
\bigoplus_{n\ge 0} \Dec_W K^\dot(\Xbar,D;\I^n).
$$
In other words, use only the degree in $K^\dot$ to compute $\Dec_W$, and ignore
the degree in $\I^\dot$. With this convention, the inclusion above induces an
inclusion
$$
(I^\dot)^{\cG_x} = H^0(X,\I^\dot) \hookrightarrow
\Dec_W \tot K^\dot(\Xbar,D;\I^\dot)
$$
that preserves the Hodge and weight filtrations. This inclusion induces a chain
map from the complex (\ref{eqn:dbcoho-group}) to the complex
(\ref{eqn:db_coho_def}). This induces the morphism on Deligne cohomology that
is compatible with products.
\end{proof}

Combining this with the second assertion of Proposition~\ref{prop:nat_homom}, we
conclude:

\begin{corollary}
\label{cor:curve_isom}
If $X$ is an affine curve, then the natural map
$
H^\dot_\cD(\cG_x,V_x) \to H^\dot_\cD(X,\V)
$
is an isomorphism for all $\V$ in $\MHS(X,\H)$. It is compatible with products.
\end{corollary}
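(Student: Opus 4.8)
The plan is to feed the isomorphism of Proposition~\ref{prop:nat_homom} into the commutative ladder of short exact sequences supplied by Theorem~\ref{thm:comp} and then invoke the five lemma. Recall that for an affine curve $X$, the second assertion of Proposition~\ref{prop:nat_homom} states that the natural homomorphism (\ref{eqn:morphism})
$$
H^\dot(\cG_x,V_x) \to H^\dot(X,\V)
$$
is an isomorphism of MHS in every degree. This is the one substantive input; everything else is formal.

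First I would record that the two outer vertical maps in the diagram of Theorem~\ref{thm:comp} are obtained by applying functors to this homomorphism. The left-hand map in degree $j$ is $\Ext^1_{\MHS_\F}(\F,\blank)$ applied to the degree $(j-1)$ component of (\ref{eqn:morphism}), namely to $H^{j-1}(\cG_x,V_x) \to H^{j-1}(X,\V)$, while the right-hand map is $\G = \Hom_{\MHS_\F}(\F,\blank)$ applied to its degree $j$ component. Since both $\Ext^1_{\MHS_\F}(\F,\blank)$ and $\Hom_{\MHS_\F}(\F,\blank)$ are functors on $\MHS_\F$, and since (\ref{eqn:morphism}) is an isomorphism of mixed Hodge structures—not merely of underlying vector spaces—both outer vertical maps are isomorphisms for every $j$.

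With the two outer columns isomorphisms and the rows exact, the five lemma (applied to the short exact rows) shows that the middle vertical map
$$
H^j_\cD(\cG_x,V_x) \to H^j_\cD(X,\V)
$$
is an isomorphism for each $j$. Compatibility with products is already part of the conclusion of Theorem~\ref{thm:comp} and is unaffected by specializing to the curve case, so the product structures correspond under the isomorphism.

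The only point demanding care—rather than a genuine obstacle—is the insistence that (\ref{eqn:morphism}) be an isomorphism \emph{of MHS}: it is this Hodge-theoretic strengthening, supplied precisely by Proposition~\ref{prop:nat_homom} for affine curves, that makes the induced maps on $\Ext^1_{\MHS_\F}$ and on $\G$ isomorphisms and thereby powers the five-lemma argument. Everything beyond this reduces to the functoriality already built into the exact sequences (\ref{eqn:ses_db}) and (\ref{eqn:db-ses}).
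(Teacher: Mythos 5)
Your proposal is correct and is essentially the paper's own argument: the paper deduces the corollary by combining Theorem~\ref{thm:comp} with the second assertion of Proposition~\ref{prop:nat_homom}, which is exactly the five-lemma argument on the ladder of short exact sequences that you spell out. Your write-up merely makes explicit the formal steps (functoriality of $\Ext^1_{\MHS_\F}(\F,\blank)$ and $\G$, and the five lemma) that the paper leaves implicit.
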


\subsection{Extensions of variations of MHS}

Denote the category of admissible variations of $\F$ mixed Hodge structure over
$X$ by $\MHS_\F(X)$. This is the ``union'' of the categories $\MHS_\F(X,\H)$
over all semi-simple objects $\H$ of $\MHS_\F(X)$.

\begin{lemma}
\label{lem:lim}
For all $\V$ in $\MHS_\F(X)$, the inclusions $\MHS(X,\H) \hookrightarrow
\MHS_\F(X)$ induce an isomorphism
$$
\varinjlim_\H \Ext_{\MHS(X,\H)}^\dot(\F,\V) \to \Ext_{\MHS_\F(X)}^\dot(\F,\V).
$$
where $\H$ ranges over a set of representatives of the isomorphism classes
of semi-simple objects of $\MHS_\F(X)$ that contain $\V$.
\end{lemma}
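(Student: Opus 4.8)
The plan is to deduce this from the general principle that Yoneda $\Ext$-groups commute with filtered unions of full abelian subcategories with exact inclusion, applied to the family $\{\MHS(X,\H)\}_\H$ inside $\MHS_\F(X)$. Concretely, I would first show that, as $\H$ ranges over the semi-simple objects containing $\V$, the subcategories $\MHS(X,\H)$ form a filtered system of full abelian subcategories of $\MHS_\F(X)$, each closed under sub-objects, quotients and extensions (indeed a Serre subcategory), and whose union is all of $\MHS_\F(X)$. Since Yoneda $\Ext$ is defined for any abelian category, the isomorphism then follows formally from the Yoneda description of extension classes.

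To set up the filtered system I would order the indexing set by declaring $\H \preceq \H'$ when $\MHS(X,\H) \subseteq \MHS(X,\H')$; this holds precisely when the reductive group $R_x(\H')$ surjects onto $R_x(\H)$, so that a monodromy action factoring through the latter also factors through the former. Given two semi-simple $\H_1,\H_2$ containing $\V$, the object $\H_1\oplus\H_2$ is semi-simple, contains $\V$, and its reductive quotient surjects onto both $R_x(\H_i)$, so the system is directed. For the union, given any $\mathbb{W}$ in $\MHS_\F(X)$ the semi-simple object $\bigoplus_m(\Gr^W_m\V \oplus \Gr^W_m\mathbb{W})$ contains $\V$ and has $\mathbb{W}$ as an object of its associated category, since the weight-graded quotients of an admissible variation are polarized variations of Hodge structure; hence every object, and by directedness every finite family of objects, of $\MHS_\F(X)$ lies in a single member of the system. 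The structural point is that each $\MHS(X,\H)$ is closed under kernels, cokernels and extensions: because morphisms of variations of mixed Hodge structure are strict for $W_\dot$, the functor $\Gr^W_m$ is exact, so the weight-graded pieces of a kernel, quotient or extension are assembled from those of the given objects. For kernels and quotients I would use that a $\pi_1(X,x)$-stable subspace of an $R_x$-module is automatically $R_x$-stable, the image of $\pi_1(X,x)$ being Zariski dense in $R_x$; for extensions I would use that the category of polarized variations of Hodge structure is semi-simple, so each $\Gr^W_m$ of an extension splits and its monodromy again factors through $R_x$.

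With this in place, the assertion is the Yoneda-theoretic statement that the filtered colimit of $\Ext$ over such a system computes $\Ext$ in the union. In cohomological degree $0$ the map is the identity on $\Hom$-groups, which agree because the subcategories are full. In higher degree, surjectivity holds because a Yoneda $n$-extension representing a class in $\Ext^\dot_{\MHS_\F(X)}(\F,\V)$ involves only finitely many objects, which by directedness all lie in a single $\MHS(X,\H)$ with $\H$ containing $\V$; exactness of the inclusion realizes the class already in $\Ext^\dot_{\MHS(X,\H)}(\F,\V)$. Injectivity holds because a class in some $\Ext^\dot_{\MHS(X,\H)}(\F,\V)$ dying in $\MHS_\F(X)$ does so through a finite zig-zag of morphisms of extensions, and this finite amount of data again lies in a single larger member of the system, where the class already vanishes. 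I expect the only real work to be the verification that each $\MHS(X,\H)$ is a Serre subcategory—in particular closure under extensions, which rests on the semi-simplicity of polarized variations of Hodge structure—while the passage to the colimit is purely formal.
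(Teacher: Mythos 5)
Your proposal is correct and follows essentially the same route as the paper: the paper's entire proof is the observation that every finite diagram in $\MHS_\F(X)$ lies in a single $\MHS(X,\H)$, with the passage to $\Ext$ groups left as formal Yoneda bookkeeping, and your argument is exactly a careful expansion of this (directedness via $\H_1\oplus\H_2$, the union property via $\bigoplus_m(\Gr^W_m\V\oplus\Gr^W_m\mathbb{W})$, closure under subobjects, quotients and extensions using strictness and semi-simplicity of polarizable VHS, then the finite-data colimit argument). Nothing in your write-up deviates from, or adds a gap to, the paper's intended proof.
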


\begin{proof}
This follows from the fact that every finite diagram of objects in $\MHS_\F(X)$
lies in $\MHS(X,\H)$ for some semi-simple object $\H$ of $\MHS_\F(X)$.
\end{proof}

Theorem~\ref{thm:big} is obtained by combining the following two results.

\begin{theorem}
\label{thm:big_thm}
If $X$ is a quasi-projective manifold and $\V$ is an admissible variation of MHS
over $X$, there there is a homomorphism
$$
\Ext^\dot_{\MHS(X)}(\F,\V) \to H^\dot_\cD(X,\V)
$$
which is compatible with products. It is an isomorphism in degrees $\le 1$ and
injective in degree $2$.
\end{theorem}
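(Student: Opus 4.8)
The plan is to reduce everything to the relative completion and then assemble over all ambient semisimple variations. Fix the base point $x$. For a semisimple $\H$ with $\V\in\MHS(X,\H)$, Corollary~\ref{cor:vmhs} gives a product-compatible isomorphism $\Ext^\dot_{\MHS(X,\H)}(\F,\V)\cong H^\dot_\cD(\cG_x,V_x)$, and Theorem~\ref{thm:comp} supplies a product-compatible homomorphism $H^\dot_\cD(\cG_x,V_x)\to H^\dot_\cD(X,\V)$; composing gives $\Ext^\dot_{\MHS(X,\H)}(\F,\V)\to H^\dot_\cD(X,\V)$. By Lemma~\ref{lem:lim}, $\Ext^\dot_{\MHS_\F(X)}(\F,\V)=\varinjlim_\H\Ext^\dot_{\MHS(X,\H)}(\F,\V)$, so I would obtain the desired homomorphism as the colimit of these composites. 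Its compatibility with products is inherited from the two inputs together with the compatibility of the Yoneda product with the transition maps in the colimit.

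The low-degree statement for a fixed $\H$ I would prove by chasing the ladder of short exact sequences in Theorem~\ref{thm:comp}, whose rows are (\ref{eqn:ses_db}) for $\cG_x$ and (\ref{eqn:db-ses}) for $X$ and whose outer vertical maps are induced by the morphism of MHS (\ref{eqn:morphism}) (Proposition~\ref{prop:nat_homom}). The key input is that (\ref{eqn:morphism}), $H^\dot(\cG_x,V_x)\to H^\dot(X,\V)$, is an isomorphism in degrees $\le1$ and injective in degree $2$. This I would verify by factoring it as $H^\dot(\cG_x,V_x)\to H^\dot(\pi_1(X,x),V_x)\to H^\dot(X,\V)$: for an $R_x$-module the first map has these properties by Proposition~\ref{prop:low_deg}, while the second always does by the standard comparison between the cohomology of $\pi_1(X,x)$ and that of $X$ with local coefficients. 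To pass from $R_x$-modules to an arbitrary Hodge representation $V_x$, I would argue by induction along the weight filtration of $V_x$: since $\U_x=W_{-1}\cG_x$ acts trivially on each $\Gr^W_m V_x$, the graded quotients are $R_x$-modules, and a standard induction with the five and four lemmas propagates ``isomorphism in degrees $\le1$, injective in degree $2$'' from the graded pieces to $V_x$.

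Granting this, the ladder gives the result for each $\H$. For $j\le1$ both outer verticals are isomorphisms, obtained by applying $\Ext^1_{\MHS_\F}(\F,-)$ and $\G$ to the isomorphisms of MHS $H^{j-1}$ and $H^j$ (with $H^{-1}=0$), so the five lemma makes $H^j_\cD(\cG_x,V_x)\to H^j_\cD(X,\V)$ an isomorphism. For $j=2$ the left vertical is still an isomorphism, as it depends only on $H^1$, while the right vertical $\G H^2(\cG_x,V_x)\to\G H^2(X,\V)$ is injective by left-exactness of $\G$ applied to the injective morphism of MHS $H^2(\cG_x,V_x)\hookrightarrow H^2(X,\V)$; the four lemma then yields injectivity in degree $2$. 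Finally I would pass to the colimit over $\H$: every map is an isomorphism in degrees $\le1$, so the transition maps are isomorphisms there and the colimit is too; in degree $2$ each map is injective into the fixed target $H^2_\cD(X,\V)$, and exactness of filtered colimits preserves injectivity.

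The step I expect to be the main obstacle is the naturality of the construction in $\H$, which is what makes the colimit meaningful. For an inclusion $\MHS(X,\H)\hookrightarrow\MHS(X,\H')$ one has a surjection $\cG_x^{\H'}\twoheadrightarrow\cG_x^{\H}$, and one must check that the resulting triangle relating $H^\dot_\cD(\cG_x^{\H},V_x)$, $H^\dot_\cD(\cG_x^{\H'},V_x)$ and $H^\dot_\cD(X,\V)$ commutes. This requires choosing the relatively injective resolutions of $V_x$ and their inclusions into Saito's mixed Hodge complex used in the proof of Theorem~\ref{thm:comp} compatibly as $\H$ grows and the reductive quotient $R_x$ changes---a functoriality statement that is routine in spirit but genuinely needs care.
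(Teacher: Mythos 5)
Your proposal is correct and takes essentially the same route as the paper: the homomorphism is assembled from Lemma~\ref{lem:lim}, Corollary~\ref{cor:vmhs} and Theorem~\ref{thm:comp}, and the low-degree assertion comes from comparing the exact sequences (\ref{eqn:ses_ext}) (with $G=\cG_x$) and (\ref{eqn:db-ses}) through the outer vertical maps induced by the morphism of MHS (\ref{eqn:morphism}). The only difference is one of detail: where the paper's proof simply invokes Proposition~\ref{prop:nat_homom} (with Proposition~\ref{prop:low_deg} implicit), you spell out why (\ref{eqn:morphism}) is an isomorphism in degrees $\le 1$ and injective in degree $2$ for an arbitrary object of $\MHS(X,\H)$---factoring through $H^\dot(\pi_1(X,x),V_x)$ and inducting along the weight filtration, whose graded quotients are $R_x$-modules by the definition of $\MHS(X,\H)$---and you correctly flag the naturality in $\H$ (checkable on the functorial bar resolutions) that the colimit construction needs; both are points the paper leaves unstated.
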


\begin{proof}
The first assertion follows directly from Lemma~\ref{lem:lim},
Corollary~\ref{cor:vmhs} and Theorem~\ref{thm:comp}. The last assertion follows
by applying Proposition~\ref{prop:nat_homom} to the map from the exact sequence
(\ref{eqn:ses_ext}) with $G=\cG_x$ to the exact sequence (\ref{eqn:db-ses}).
\end{proof}

Combining this with Corollary~\ref{cor:curve_isom} we conclude:

\begin{corollary}
If $X$ is an affine curve, then for all admissible variations of $\F$-MHS $\V$
over $X$, there is a natural isomorphism
$$
\Ext^\dot_{\MHS_\F(X)}(\F,\V) \cong H^\dot_\cD(X,\V)
$$
which is compatible with products.
\end{corollary}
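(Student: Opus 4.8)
The plan is to deduce the statement directly from Theorem~\ref{thm:big_thm} and Corollary~\ref{cor:curve_isom}; the only genuinely new point is upgrading ``isomorphism in degrees $\le 1$ and injective in degree $2$'' to ``isomorphism in all degrees'' when $X$ is an affine curve. First I would recall how the homomorphism of Theorem~\ref{thm:big_thm} is assembled. By Lemma~\ref{lem:lim} there is a canonical identification
$$
\Ext^\dot_{\MHS_\F(X)}(\F,\V) \cong \varinjlim_\H \Ext^\dot_{\MHS(X,\H)}(\F,\V),
$$
where $\H$ ranges over the semi-simple objects of $\MHS_\F(X)$ that contain $\V$. By Corollary~\ref{cor:vmhs}, each term of this limit is identified, compatibly with products, with $H^\dot_\cD(\cG_x,V_x)$, where $\cG_x$ is the completion of $\pi_1(X,x)$ relative to the reductive quotient attached to $\H$. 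After these identifications, the homomorphism of Theorem~\ref{thm:big_thm} is the direct limit of the comparison maps $H^\dot_\cD(\cG_x,V_x) \to H^\dot_\cD(X,\V)$ furnished by Theorem~\ref{thm:comp}.

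Next I would invoke Corollary~\ref{cor:curve_isom}, which asserts precisely that when $X$ is an affine curve each of these maps $H^\dot_\cD(\cG_x,V_x) \to H^\dot_\cD(X,\V)$ is an isomorphism in all degrees and is compatible with products. Since a direct limit of isomorphisms is again an isomorphism, passing to the limit over $\H$ then yields that
$$
\Ext^\dot_{\MHS_\F(X)}(\F,\V) \to H^\dot_\cD(X,\V)
$$
is an isomorphism in all degrees, and product compatibility is inherited at each finite stage and preserved under the direct limit.

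The main obstacle I anticipate is bookkeeping rather than substance: one must check that the isomorphisms of Corollary~\ref{cor:curve_isom} are natural in $\H$, i.e.\ that they are compatible with the transition maps of the directed system $\{\Ext^\dot_{\MHS(X,\H)}(\F,\V)\}_\H$, so that they assemble into a single map on the direct limit and that this assembled map coincides with the homomorphism constructed in Theorem~\ref{thm:big_thm}. This reduces to the functoriality of the construction of Theorem~\ref{thm:comp} with respect to the surjections of relative completions induced by the inclusions $\H \hookrightarrow \H'$, together with the last assertion of Proposition~\ref{prop:nat_homom}, which guarantees that the underlying comparison map (\ref{eqn:morphism}) is an isomorphism for affine curves because such a curve is a model for the classifying space of a free group. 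Once this compatibility is verified, the conclusion is immediate.
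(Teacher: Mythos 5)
Your proposal is correct and follows essentially the same route as the paper, which deduces the corollary by combining Theorem~\ref{thm:big_thm} (whose map is assembled, exactly as you recall, from Lemma~\ref{lem:lim}, Corollary~\ref{cor:vmhs}, and Theorem~\ref{thm:comp}) with Corollary~\ref{cor:curve_isom}. Your extra care about naturality of the isomorphisms in $\H$ across the direct system is a point the paper leaves implicit, but it is the same argument.
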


\section{Brown's Computation of the Cup Products of Eisenstein Series}
\label{sec:brown}

In this section, we briefly explain how Brown's computations
\cite[\S11]{brown:mmv} can be interpreted in terms of Deligne cohomology of the
relative completion of $\SL_2(\Z)$. This discussion is not self contained; the
reader will need some familiarity with \cite{brown:mmv}, \cite{hain:modular} and
\cite{mem}. 

\begin{remark}
The reader should be aware that, whereas we use {\em left} group cochains, Brown
uses {\em right} cochains. He also multiplies paths in the topologist's order,
the opposite of that used here. So, with the conventions in this paper, his
fundamental group of a space is the opposite group of the fundamental group used
here. His work coincides with what we have here once one makes the following
observations:

Denote the opposite group of a group $\G$ by $\G^\op$. Every left $\G$ module
$V$ is naturally a right $\G^\op$ module, denoted $V^\op$. Suppose that $V$ is a
left $\G$-module. The function that takes a left $\G$-invariant function $f :
\G^{n+1} \to V$ to the right invariant function $F : (\G^\op)^{n+1} \to V^\op$
defined by
$$
F(g_0,\dots,g_n) = f(g_0^{-1},\dots,g_n^{-1})
$$
is an isomorphism and induces a chain isomorphism
$$
\cC^\dot(\G,V) \to \cC^\dot(\G^\op,V^\op).
$$
\end{remark}

\subsection{Preliminaries}

Let $\G=\SL_2(\Z)$. We identify this with the orbifold fundamental group
$\pi_1(\M_{1,1},\partial/\partial q)$ of the modular curve with base point the
natural tangent vector at the cusp. Let $\cG$ be the completion (with $\F=\Q$)
of $\G$ with respect to the inclusion $\G\hookrightarrow \SL_2(\Q)$. It has a
natural (limit) MHS with weight filtrations $W_\dot$ (the ``global'' weight
filtration) and $M_\dot$ (the relative weight filtration). The defining
representation of $\SL_2$ will be denoted by $H$. The corresponding local system
$\H$ over $\M_{1,1}$ underlies a polarized variation of Hodge structure of
weight $+1$. Its fiber over $\partial/\partial q$ is $H$. It has a natural limit
MHS with weight filtrations $W_\dot$ and $M_\dot$. It is a Hodge representation
of $\cG$.

Brown's computations are most naturally interpreted in the Deligne cohomology of
$\cG$. Theorem~\ref{thm:comp} implies that the natural map
$$
H^\dot_\cD(\cG,S^m H(r)) \to H^\dot_\cD(\M_{1,1},S^m \H(r))
$$
is an isomorphism. These groups vanish when $m$ is odd.

For all $r\in \Z$, the homomorphism from $\cG$ to the trivial group induces a
natural isomorphism
$$
H^j_\cD(\cG,S^{m}H(r)) \cong \Ext^j_\MHS(\Q,\Q(r)).
$$
This group vanishes, except when $r=j=0$ and when $j=1$ and $r>0$. When $m>0$,
standard results (Eichler-Shimura, Manin-Drinfeld) about the cohomology of
modular curves imply that
$$
H^1_\cD(\cG,S^{m}H(r)) \cong
\begin{cases}
\Q & m=2n,\ r = 2n+1,\cr
0 & \text{otherwise}
\end{cases}
$$
and that
$$
H^2_\cD(\cG,S^{m}\H(r)) \cong \Ext^1_\MHS(\Q,H^1(\M_{1,1},S^{m}\H(r))).
$$
This is non-zero only when $m=2n>0$.

We will describe the image of the cup product in {\em real} Deligne cohomology
as in degree 2 it is more easily described:
\begin{align*}
H^2_\cD(\cG,S^{2n}\H_\R(r)) &\cong
\Ext^1_{\MHS_\R}(\R,H^1(\M_{1,1},S^{2n}\H(r)))
\cr
&= \Ext^1_{\MHS_\R}(\R,\R(r-2n-1)) \oplus
\bigoplus_{f\in \B_{2n+2}}\Ext^1_{\MHS_\R}(\R,V_f(r))
\cr
&\cong
\begin{cases}
\R \oplus \bigoplus_{f\in \B_{2n+2}} V_{f,\R} & n > 0,\ r\ge 2n+2,\cr
0 & \text{otherwise.}
\end{cases}
\end{align*}
Here $\B_{2n+2}$ denotes the set of normalized Hecke eigen cusp forms of weight
$2n+2$ and $V_f$ the 2-dimensional real Hodge structure corresponding to
$f\in\B_{2n+2}$.\footnote{The projection $\pi_f$ is defined on cohomology with
$\F = \Q_f$, the field generated by the Fourier coefficients of $f$.}

The cup product is most easily described by giving its projections onto
the components of the degree 2 cohomology corresponding to Hecke eigenforms.
To this end, recall that, when $\F = \Q$ and $\R$, one has projections
\begin{multline}
\label{eqn:projnsQ}
\pi_\eis : H^2_\cD(\cG,S^{2n}\H_\F(r)) \to \Ext^1_\MHS(\F,\F(r-2n-1))
\cr
\text{ and }
\pi_\cusp : H^2_\cD(\cG,S^{2n}\H_\F(r)) \to
\Ext^1_\MHS(\F,H^1_\cusp(\M_{1,1},S^{2n}\H_\F(r))).
\end{multline}
onto the Eisenstein and cuspidal parts, respectively. When $\F=\R$, we can
further decompose $\pi_\cusp$ into the projections
\begin{equation}
\label{eqn:projnsR}
\pi_f : H^2_\cD(\cG,S^{2n}\H_\R(r)) \to \Ext^1_\MHS(\R,V_f(r)) \cong V_{f,\R},
\end{equation}
where $f \in \B_{2n+2}$.

To describe the cup product, we also have to decompose the coefficients into
their irreducible pieces. Suppose that $n=j+k$, with $j>k>0$. The standard
isomorphism
$$
S^{2j}H\otimes S^{2k}H \cong \bigoplus_{r=0}^k S^{2n-2r}H(-r)
$$
of $\SL(H)$-modules respects the mixed Hodge structures. Fix a normalization
of the $\SL(H)$-invariant projection
$$
p_r : S^{2j}H\otimes S^{2k}H \to S^{2n-2r}H(-r).
$$
such as the one given in \cite[\S2.4]{brown:mmv}. When $r \le 2\min\{j,k\}$, one
can compose the cup product
$$
H^1_\cD(\cG,S^{2j}H(2j+1)) \otimes H^1_\cD(\cG,S^{2k}H(2k+1))
\to H^2_\cD(\cG,S^{2j}H\otimes S^{2k}H(2n+2))
$$
with the projection $p_r$ and the projections (\ref{eqn:projnsQ})  and
(\ref{eqn:projnsR}) to obtain maps
\begin{equation}
\label{eqn:cup_eis}
\Phi_r^\eis : H^1_\cD(\cG,S^{2j}H(2j+1)) \otimes H^1_\cD(\cG,S^{2k}H(2k+1))
\to \Ext^1_\MHS(\R,\R(2n-r+2)).
\end{equation}
and
\begin{equation}
\label{eqn:cup_cusp}
\Phi_{r,f} : H^1_\cD(\cG,S^{2j}H(2j+1)) \otimes H^1_\cD(\cG,S^{2k}H(2k+1))
\to V_{f,\R}
\end{equation}
for each $f\in \B_{2n-2r+2}$. Brown \cite[\S10, \S11]{brown:mmv} shows that
$\Phi_r^\eis$ vanishes except in the extremal case where $r=2\min\{j,k\}$.

Note that, since the generators of $H^1_\cD(\cG,S^{2n}\H(2n+1))$ are invariant
under the de~Rham involution $\Frbar_\infty$, so is the image of the cup
product.

\subsection{Modular forms and $\cO(\cG)$}

Denote the canonical extension of the flat bundle $\H\otimes\cO_{\M_{1,1}}$ to
$\Mbar_{1,1}$ by $\cH$. Denote the upper half plane by $\h$. Set $q = e^{2\pi i
\tau}$. The pullback of $\cH$ to the $q$-disk is a trivial holomorphic vector
bundle. The trivializing sections are
$$
\aa \text{ and }\bw= -2\pi i(\bb - \tau \aa),
$$
where $\aa$ and $\bb$ is the standard basis of $H$, defined in
\cite[\S9]{hain:modular}. They can also be regarded as trivializing sections of
$\H_\Q$ over $\h$.

For a modular form $f : \h \to \C$ of $\SL_2(\Z)$ of weight $2n+2$, set
$$
\w_f = 2\pi i f(\tau) \be(\tau) d \tau.
$$
where $\be(\tau) = \bw^{2n} = (2\pi i)^{2n}(\bb - \tau \aa)^{2n}$. This is a
$\G$-invariant 1-form on $\h$ with coefficients in $\cH$. When $f$ is a
normalized eigenform, it is section of $\Omega^1_{\Mbar_{1,1/\Q}}(\log P)\otimes
F^{2n}S^{2n}\cH$ defined over $\Q$, \cite[\S19]{hain:kzb}. The de Rham theorem
for relative completion \cite{hain:malcev} implies that if $f_j$ is a modular
form of weight $2n_j+2$, then the function
$$
I_{[f_1|\dots|f_m]} : 
\SL_2(\Z) \to S^{n_1}H \otimes S^{n_2} H \otimes \cdots \otimes S^{n_m} H
$$
defined by the {\em regularized} (cf.\ \cite{brown:mmv}) iterated integral
$$
I_{[f_1|\dots|f_m]} :
\gamma \mapsto \int_\gamma \w_{f_1} \w_{f_2} \dots \w_{f_m}
$$
is an element of
$
F^{2n+m}\big(
\cO(\cG) \otimes S^{n_1}H \otimes S^{n_2} H \otimes \cdots \otimes S^{n_m} H
\big)
$
where $n=n_1+\dots+n_m$. It is a 1-cochain satisfying
$$
I_{[f_1|\dots|f_m]} \in F^{2n+m}
\cC^1(\cG, S^{n_1}H \otimes S^{n_2} H \otimes \cdots \otimes S^{n_m} H).
$$
When $m=1$, it is a cocycle. Invariance under $\G$ and standard properties of
iterated integrals imply that
$$
I_{[f_1]} \cup I_{[f_2]} + \delta I_{[f_1|f_2]} = 0.
$$
See \cite[(5.2)]{brown:mmv}

\begin{remark}
Note that in \cite{brown:mmv}, $\SL_2(\Z)$ acts on the {\em right} of the vector
space $V = \Q X \oplus \Q Y$ by the action
$$
\begin{pmatrix} X\cr Y\end{pmatrix} \mapsto
\begin{pmatrix} a & b \cr c & d \end{pmatrix}
\begin{pmatrix} X\cr Y\end{pmatrix}.
$$
This corresponds to the left action
$$
\begin{pmatrix} \aa & -\bb \end{pmatrix} \mapsto
\begin{pmatrix} \aa & -\bb \end{pmatrix}
\begin{pmatrix} a & b \cr c & d \end{pmatrix}
$$
of $\SL_2(\Z)$ on $H = \Q\aa \oplus \Q\bb$ under the identification $X=\bb$,
$Y=\aa$.
\end{remark}

\subsection{Interlude}

For clarity, we make a few remarks about the Betti and de~Rham structures of the
coefficients $S^m H(r)$. Although we are not using the $\Q$-DR structure in the
construction of DB-cohomology, we describe the $\Q$-DR structure of $S^m H(r)$
as that should help clarify the normalizations we choose in this section and the
next as well as its role in applications, such as those in \cite{mem}. Relevant
background can be found in \cite{hain:modular}.

Suppose that $V$ is a Hodge structure with underlying $\Q$-Betti and
$\Q$-de~Rham spaces $V^B$ and $V^\DR$. One can identify the $\Q$-de~Rham
space of $V(r)$ with $V^\DR$ and its $\Q$-Betti space with
$$
V(r)^B := (2 \pi i)^r V^B.
$$
We now explain how this works when $V=S^m H$.

As a Hodge structure of weight 1, $H^B$ has $\Q$ basis $\aa$ and $\bb$ and
$H^\DR$ has a $\Q$-basis $\aa$ and $\bw = -2\pi i \bb$. The vector $\bw$ spans
$F^1 H^\DR$. The $m$th symmetric power $S^m H$ thus has $\Q$-Betti basis the
monomials $\aa^j\bb^{m-j}$, and $\Q$-de Rham basis $(2\pi
i)^{m-j}\aa^j\bb^{m-j}$. It decomposes as a sum
$$
S^m H = \Q(-m) \oplus \Q(-m+1) \dots \oplus \Q(1) \oplus \Q(0).
$$
The $\Q$-Betti space of $S^m H(r)$ is $S^mH(r)^B = (2\pi i)^r S^m H^B$. For
future reference, the $\Q$-de~Rham and $\Q$-Betti bases of $S^m H(r)$  are given
in Figure~\ref{fig:bases}.

\begin{figure}[!ht]
$$
\begin{array}{|c||c|c|c|c|c|}
\hline
 & \Q(r) & \dots & \Q(r+j) & \dots & \Q(m+r) \\
\hline
\DR & \bw^m & \phantom{xxxxxxx} & \bw^{m-j}\aa^j & \phantom{xxxxxxx} & \aa^m \\
B & (2\pi i)^{r}\bb^m && (2\pi i)^{r}\bb^{m-j}\aa^j& & (2\pi i)^r \aa^m \\
\hline
\end{array}
$$
\caption{$\Q$-de~Rham and $\Q$-Betti bases of $S^m H(m+r)$}
\label{fig:bases}
\end{figure}

Denote the {\em normalized} Eisenstein series of weight $2k$ by $G_{2k}$. (The
definition is recalled below.) If we regard $\w_{G_{2k}}$ as taking values in
$S^{2k-2}H(2k-1)$, then
$$
I_{[G_{2m_1}|\dots|G_{2m_r}]} \in
F^0 W_0 \big[
\cO(\cG)\otimes S^{2m_1-2}H\otimes \cdots \otimes S^{2m_r-2}H(2m-r)
\big],
$$
where $m=m_1+\dots+m_r$.

\subsection{The Eisenstein cocycle}

The generator $\bG_{2n+2}$ of $H^1_\cD(\cG,S^{2n}H(2n+1))$ corresponds to the
normalized Eisenstein series
$$
G_{2n}(q) = -\frac{B_{2n}}{4n} + \sum_{n=1}^\infty \sigma_{2n-1}(n)q^n
$$
of weight $2n+2$, where $B_{2n}$ denotes the $2n$th Bernoulli number. The
$\Q$-DR 1-cocycle that corresponds to $\w_{G_{2n+2}}$ is $I_{[G_{2n+2}]}$.
Brown's computation \cite[Lem.~7.1]{brown:mmv} implies that
$$
Z_{2n} :=
\begin{bmatrix}
& -\frac{(2n)!}{2}\zeta(2n+1)\aa^{2n} \cr
I_{[G_{2n+2}]}(\bb,\aa) && (2\pi i)^{2n+1} e_{2n+2}^0(\bb,\aa)
\end{bmatrix}
$$
is a 1-cocycle that represents $\bG_{2n}$, where $e^0_{2n+2}$ is the 1-cocycle
defined in \cite[\S7.3]{brown:mmv}. It is the ``rational part'' of the
normalized period polynomial of $G_{2n+2}/(2\pi i)^{2n+1}$. Since $\G \to
\cG(\Q)$ is Zariski dense, the restriction map $\cC^\dot(\cG,V) \to
\cC^\dot(\G,V)$ on group cochains is injective. Since $I_{[G_{2n+2}]}$ and
$\delta \bb^{2n} \in \cC^1(\cG_\Q,S^{2n}H)$, $e_{2n+2}^0$ is also in
$\cC^1(\cG,S^{2n}H)$. It is rational as it takes rational values on $\G$.

\begin{remark}
The class $\bG_{2n+2}$ restricts to the base point $\partial/\partial q$
to give an element of
$$
\Ext_\MHS^1(\Q,S^{2n}H(2n+1)) \cong \oplus_{r=1}^{2n+1} \Ext^1_\MHS(\Q,\Q(r)).
$$
This is the fiber of the corresponding VMHS over $\M_{1,1}$ at
$\partial/\partial q$ and is the limit MHS of this variation with respect to the
tangent vector $\partial/\partial q$. Brown's cocycle implies that this limit
MHS is the coset of $-(2n)!\zeta(2n+1)\aa^{2n}/2$ in
$$
\Ext^1_\MHS(\Q,S^{2n}H(2n+1)) \cong S^{2n}H_\C/(2\pi i)^{2n+1} S^{2n}H_\Q.
$$
It corresponds to the coset of $-(2n)!\zeta(2n+1)$ in
$\Ext^1_\MHS(\Q,\Q(2n+1))$.
\end{remark}

\subsection{The cup product of Eisenstein classes}

In this section, we compute a formula for the external cup product
$\bG_{2j+2}\cup \bG_{2k+2}$. This will allow us to use Brown's period
computations \cite{brown:mmv} to compute the cup products of the classes
$\bG_{2m}$ in Deligne cohomology.

Suppose that $j \ge k > 0$. Set $n=j+k$.  In the following discussion, the cup
product of groups cochains is to be interpreted as the external cup product
$$
\cup : \cC^\dot(\cG,S^{2j}H(2j+1))\otimes \cC^\dot(\cG,S^{2k}H(2k+1))
\to \cC^\dot(\cG,S^{2j}H\otimes S^{2k}H(2n+2)).
$$
Set $\bB_{2m+2}=
-\frac{(2m)!}{2}\zeta(2m+1)\aa^{2m}$. Observe that
{\small
\begin{multline*}
Z_{2j+2}\cup_{1/2} Z_{2k+2}
\cr
=
\begin{bmatrix}
\xymatrix@C=-36pt@R=8pt{
& (2\pi i)^{2k+1}\bB_{2j+2}\cup e_{2k+2}^0 -
(2\pi i)^{2j+1}e_{2j+2}^0\cup \bB_{2k+2} + C
\cr
I_{[G_{2j+2}]}\cup I_{[G_{2k+2}]}
&&
(2\pi i)^{2n+2} e_{2j+2}^0 \cup e_{2k+2}^0
}
\end{bmatrix}
\end{multline*}
}
with $\delta C = 0$, where
$$
C=
\frac{1}{2}\big(\bB_{2j+2}\cup (\delta \bB_{2k+2})
-(\delta \bB_{2j+2})\cup \bB_{2k+2}\big).
$$
For future reference, we note that this is {\em real}.  Since the coboundary
operator $\delta$ is strict with respect to $W_\dot$, we can find $h\in
W_0\cC^1(\cG_\Q,S^{2j}H_\Q\otimes S^{2k}H_\Q)(2n+2)$ such that
$$
e_{2j+2}^0 \cup e_{2k+2}^0 + \delta h = 0.
$$
(Brown \cite[\S10.3]{brown:mmv} denotes a choice of $h$ by $-e^0_{2j+2,2k+2}$.)
Since
$$
I_{[G_{2j+2}]}\cup I_{[G_{2k+2}]} + \delta I_{[G_{2j+2}|G_{2k+2}]} = 0
$$
and since (after twisting)
$$
I_{[G_{2j+2}|G_{2k+2}]} \in F^0 W_0 \cC^1(\cG,S^{2j+1}H\otimes S^{2k+1}H(2n+2)),
$$
$$
D :=
\begin{bmatrix}
& 0 & 
\cr
I_{[G_{2j+2}|G_{2k+2}]} && (2\pi i)^{2n+2} h
\end{bmatrix}
$$
is a  1-cochain.

The cup product, as an element of $H^2_\cD(\M_{1,1},S^{2j}\H\otimes
S^{2k}\H(2n+2))$, is represented by the cocycle
$$
Z_{2j+2}\cup_{1/2} Z_{2k+2} + \delta D
=
\begin{bmatrix}
& E \cr
0 && 0
\end{bmatrix}
$$
where $E$ is the 1-cocycle
$$
I_{[G_{2j+2}|G_{2k+2}]} +
(2\pi i)^{2k+1}\bB_{2j+2}\cup e_{2k+2}^0 -
(2\pi i)^{2j+1}e_{2j+2}^0\cup \bB_{2k+2} + C - (2\pi i)^{2n+2}h.
$$
with values in $S^{2j}H\otimes S^{2k}H(2n+2)$.

\begin{proposition}
\label{prop:cup}
With the notation above, the cup product $\bG_{2j+2}\cup \bG_{2k+2}$ in the {\em
real} Deligne cohomology group
$$
H^2_\cD(\cG,S^{2j}H_\R\otimes S^{2k}H_\R(2n+2)) \cong
\Ext^1_\MHS\big(\R,H^1(\cG,S^{2j}H_\R\otimes S^{2k}H_\R(2n+2))\big)
$$
is represented by the imaginary part of
$$
\Im(I_{[G_{2j+2}|G_{2k+2}]} +
(2\pi i)^{2k+1}\bB_{2j+2}\cup e_{2k+2}^0 -
(2\pi i)^{2j+1}e_{2j+2}^0\cup \bB_{2k+2}).
$$
\end{proposition}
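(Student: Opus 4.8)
The plan is to read the extension class directly off the Deligne $2$-cocycle $(0,0,E)$ displayed above, and then to throw away its real part. First note that $\G H^2(\cG,S^{2j}H\otimes S^{2k}H(2n+2))=0$ (the degree two cohomology carries no classes of type $(0,0)$; cf.\ the description of these groups preceding (\ref{eqn:projnsR})), so the exact sequence (\ref{eqn:ses_db}) collapses to the stated isomorphism
$$
H^2_\cD(\cG,S^{2j}H_\R\otimes S^{2k}H_\R(2n+2)) \cong \Ext^1_{\MHS_\R}\big(\R,M\big),\qquad M := H^1(\cG,S^{2j}H_\R\otimes S^{2k}H_\R(2n+2)).
$$
I would use the Carlson presentation $\Ext^1_{\MHS_\R}(\R,M)\cong M_\C/(F^0 M_\C + M_\R)$ that underlies the injective part of (\ref{eqn:ses_db}).

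Next I would identify the image of the cup product under this isomorphism. A degree two cocycle of the complex (\ref{eqn:dbcoho-group}) of the form $(0,0,E)$ satisfies $\delta E=0$, so $E$ is a closed $1$-cochain defining a class $[E]\in M_\C$. Inspecting the coboundaries: $\delta$ applied to a degree one element $(w',z',c')$ has $z$- and $w$-slots $\delta z'$ and $\delta w'$, so to stay of the form $(0,0,\cdot)$ we need $w',z'$ closed, and then the $c$-slot changes $E$ by $w'-z'-\delta c'$, with $w'$ a closed element of $F^0W_0\cC^1$ over $\C$ and $z'$ a closed element of $W_0\cC^1$ over $\R$. Hence $[E]$ is well defined modulo $F^0 M_\C + M_\R$, and this residue is exactly the extension class. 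This is the standard Beilinson--Carlson reading of the cone complex, and is precisely what makes (\ref{eqn:ses_db}) an isomorphism onto $\Ext^1$.

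Then I would realize this residue through the imaginary part. The $\R$-linear map $\Im:M_\C\to M_\R$, $\eta\mapsto \tfrac{1}{2i}(\eta-\bar\eta)$, has kernel $M_\R$ and satisfies $\Im(F^0 M_\C + M_\R)=\Im F^0 M_\C$, so it descends to an isomorphism
$$
M_\C/(F^0 M_\C + M_\R)\overset{\simeq}{\longrightarrow} M_\R/\Im F^0 M_\C.
$$
Since the group-cochain differential $\delta$ is defined over $\R$, complex conjugation commutes with it and $\delta(\Im\beta)=\Im(\delta\beta)$; thus $\Im$ takes cocycles to cocycles and coboundaries to coboundaries, so the class of the real cocycle $\Im E$ depends only on $[E]$ and represents the extension class. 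This is compatible with the later decomposition into the Eisenstein summand $\Ext^1_{\MHS_\R}(\R,\R(2j+2))\cong\R$ (the Tate weight $2j+2$ is even, so the real structure is a real multiple of $(2\pi i)^{2j+2}$ and the residue is again computed by $\Im$) and the cuspidal summands $\Ext^1_{\MHS_\R}(\R,V_f(r))\cong V_{f,\R}$ of (\ref{eqn:projnsR}).

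Finally I would discard the real terms. By construction $E=T+\big(C-(2\pi i)^{2n+2}h\big)$, where
$$
T = I_{[G_{2j+2}|G_{2k+2}]} + (2\pi i)^{2k+1}\bB_{2j+2}\cup e_{2k+2}^0 - (2\pi i)^{2j+1}e_{2j+2}^0\cup \bB_{2k+2}
$$
is the displayed sum of three terms. We already observed that $C$ is real; moreover $h$ is a rational cochain and $2n+2$ is even, so $(2\pi i)^{2n+2}=(-4\pi^2)^{n+1}$ is real and hence $(2\pi i)^{2n+2}h$ is real. Therefore $\Im E=\Im T$ as real cochains, and $\bG_{2j+2}\cup\bG_{2k+2}$ is represented by $\Im T$, as claimed. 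The main obstacle is the middle step: pinning down that the cone-complex class $(0,0,E)$ maps to the Carlson residue $[E]\bmod(F^0 M_\C + M_\R)$ and that the $\Im$-projection realizes this residue compatibly with the real Hodge structures entering (\ref{eqn:projnsR}). Once this dictionary is in place, the reality of $C$ and of $(2\pi i)^{2n+2}h$ makes the remaining reduction immediate.
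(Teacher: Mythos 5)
Your proof is correct and follows essentially the same route as the paper's: the paper likewise reads the class off the cone cocycle $(0,0,E)$ via the isomorphism $\Ext^1_{\MHS_\R}(\R,M)\cong M_\C/M_\R\cong iM_\R$ (valid here since $F^0M_\C=0$ and the twist $2n+2$ is even, so the twisted real structure coincides with $M_\R$), and then uses the reality of $C$ and $(2\pi i)^{2n+2}h$ to replace $\Im E$ by $\Im T$. Your write-up merely makes explicit the cone-to-Carlson dictionary and the fact that $\Im$ commutes with $\delta$, points the paper leaves implicit.
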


\begin{proof}
To compute the cup product in {\em real} Deligne cohomology, we can modify $E$
by any element of the form
$$
\delta 
\begin{bmatrix}
& 0 & \cr
0 && z
\end{bmatrix}
$$
where $z$ is any cocycle in $W_0 \cC^1(\cG_\R,S^{2j}H\otimes S^{2k}H(2n+2))$.
Since $C$ and $h$ are real, this implies that
$$
\bG_{2j+2}\cup \bG_{2k+2} \in H^2_\cD(\cG,S^{2j}H_\R\otimes S^{2k}H_\R(2n+2))
\cong H^1(\cG,S^{2j}H_\R\otimes S^{2k}H_\R(2n+2))
$$
is represented by the imaginary part
$$
\Im(I_{[G_{2j+2}|G_{2k+2}]} +
(2\pi i)^{2k+1}\bB_{2j+2}\cup e_{2k+2}^0 -
(2\pi i)^{2j+1}e_{2j+2}^0\cup \bB_{2k+2})
$$
of $E$ with respect to the isomorphisms
\begin{multline*}
\Ext^1_\MHS\big(\R,H^1(\cG,S^{2j}H_\R\otimes S^{2k}H_\R(2n+2))\big)
\cr
\cong H^1(\cG,S^{2j}H_\C\otimes S^{2k}H_\C)/(2\pi i)^{2n+2}
H^1(\cG,S^{2j}H_\R\otimes S^{2k}H_\R)
\cr
\cong iH^1(\cG,S^{2j}H_\R\otimes S^{2k}H_\R).\hspace*{2in}
\end{multline*}
\end{proof}

\subsection{The cuspidal projection of the cup product}

In this section, we interpret Brown's period computations
\cite[\S11]{brown:mmv} in terms of Deligne cohomology.

Recall that the {\em modular symbol} of a cusp form $f$ of weight $2n+2$
is the homogeneous polynomial
$$
\br_f := \int_0^\infty \w_f \in S^{2n}H_\C,
$$
where the path of integration is the imaginary axis. When $f$ has real
Fourier coefficients (such as when $f\in \B_{2n+2}$), this decomposes
$$
\br_f = \br_f^+ + i\br_f^-
$$
where $\br_f^\pm$ are real and where all terms of $\br_f^+$ (resp.\ $\br_f^-$)
have even (resp.\ odd) degrees in $\aa$ and $\bb$. The polynomials $\br_f^\pm$
represent classes in $H^1_\cusp(\M_{1,1},S^{2n}H_\R)$.

\begin{theorem}[Brown]
Suppose that $j \ge k > 0$ and that $0 \le r \le 2k$. Set $n=j+k$. If $f$ is a
normalized Hecke eigen cusp form of weight $2n-2r+2$, then
$$
\Phi_{r,f}(\bG_{2j+2}\otimes \bG_{2k+2}) =
(2\pi i)^r \Lambda(f,2n-r+2) \br_f^\epsilon
$$
where $\Lambda(f,s) = (2\pi)^{-s}\G(s)L(f,s)$ is the completed $L$-function
associated to $f$ and $\epsilon \in \{\pm\}$ is the sign of $(-1)^{r+1}$.
\end{theorem}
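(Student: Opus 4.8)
The plan is to reduce the statement to the explicit cocycle produced in Proposition~\ref{prop:cup} and then to extract its $f$-isotypic cuspidal component by pairing against $f$; the only genuinely analytic input is Brown's period computation. By Proposition~\ref{prop:cup}, the cup product $\bG_{2j+2}\cup\bG_{2k+2}$ in real Deligne cohomology is represented by the imaginary part of
$$
I_{[G_{2j+2}|G_{2k+2}]} + (2\pi i)^{2k+1}\bB_{2j+2}\cup e_{2k+2}^0 - (2\pi i)^{2j+1}e_{2j+2}^0\cup \bB_{2k+2},
$$
a $1$-cocycle with values in $S^{2j}H\otimes S^{2k}H(2n+2)$. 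To compute $\Phi_{r,f}$ I would first apply the $\SL(H)$-equivariant projection $p_r$ to land in $S^{2n-2r}H(2n-r+2)$, and then apply the cuspidal Hecke projection $\pi_f$. Since $p_r$ is a morphism of MHS it commutes with taking imaginary parts, so the class to be evaluated is $\pi_f\,p_r\,[\Im(\cdots)]$, where $(\cdots)$ is the three-term expression above.

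Next I would identify the target and make $\pi_f$ concrete. Via Eichler--Shimura the $f$-isotypic part of $H^1_\cusp(\M_{1,1},S^{2n-2r}H_\R)$ is $V_{f,\R}$, spanned by the even and odd modular symbols $\br_f^{\pm}$, and under the isomorphism $\Ext^1_\MHS(\R,V_f(2n-r+2))\cong V_{f,\R}$ the projection $\pi_f$ is realized by the Haberland/Petersson pairing of a group $1$-cocycle with the period polynomial of $f$. The main step is then to evaluate this pairing on each of the three summands. The two correction terms are built from the rational period cocycles $e^0$ of the Eisenstein series together with the constants $\bB_{2m+2}$, which carry the odd zeta values; their cuspidal projections are combinations of $\zeta(2m+1)$ with critical $L$-values of $f$, and I would track these rather than assume they vanish, since they combine with the iterated-integral term to produce the clean answer. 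The essential term is $I_{[G_{2j+2}|G_{2k+2}]}$: its cuspidal projection is the double iterated integral of Eisenstein series paired with $f$.

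To finish I would invoke Brown's evaluation of that pairing. A Rankin--Selberg unfolding (Brown \cite[\S11]{brown:mmv}) shows that the $f$-projection of the double iterated integral equals the \emph{non-critical} completed value $\Lambda(f,2n-r+2)$ (note $2n-r+2$ lies to the right of the critical strip of the weight $2n-2r+2$ form $f$, which is exactly the Beilinson regime for a product of Eisenstein classes) times a modular symbol of $f$. The parity is then pinned down by bookkeeping: $\br_f=\br_f^++i\br_f^-$ with $\br_f^+$ (resp.\ $\br_f^-$) supported in even (resp.\ odd) degrees, and the interplay of the Tate twist $(-r)$, which contributes the factor $(2\pi i)^r$, with the imaginary-part operation selects precisely $\br_f^\epsilon$ where $\epsilon$ is the sign of $(-1)^{r+1}$. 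Assembling these pieces yields
$$
\Phi_{r,f}(\bG_{2j+2}\otimes\bG_{2k+2}) = (2\pi i)^r\,\Lambda(f,2n-r+2)\,\br_f^\epsilon.
$$

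The main obstacle is the explicit period evaluation in the third step: identifying the $f$-cuspidal projection of $I_{[G_{2j+2}|G_{2k+2}]}$ with the non-critical value $\Lambda(f,2n-r+2)$, carrying the correct power of $2\pi i$ and the correct modular symbol. This is Brown's Rankin--Selberg computation, so the work on the present side is almost entirely one of matching normalisations --- the choice of $p_r$, the normalisation of the cocycles $e^0$ and the constants $\bB_{2m+2}$, the identification $\Ext^1_\MHS(\R,V_f(r))\cong V_{f,\R}$, and the modular-symbol conventions --- so that Brown's formula reproduces the stated constant exactly. The secondary technical point is controlling the cuspidal projection of the two Eisenstein correction terms under $\pi_f$ and $\Im$, so that their contribution assembles correctly with the double-integral term rather than obstructing the identification.
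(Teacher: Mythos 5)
Your proposal is correct and follows essentially the same route as the paper: the paper's proof is precisely the combination of Proposition~\ref{prop:cup} with Brown's period computations (Equations (11.2), (11.4) and Corollary~11.2 of \cite{brown:mmv}), which is exactly the reduction you describe. Your additional detail --- applying $p_r$ and the Haberland/Petersson pairing realization of $\pi_f$, and tracking the Eisenstein correction terms and normalizations --- is just an unpacking of what that citation to \cite[\S11]{brown:mmv} carries.
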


\begin{proof}
This follows directly from Equations (11.2) and (11.4) and Corollary~11.2 in
\cite{brown:mmv} and Proposition~\ref{prop:cup}.
\end{proof}

\subsection{An external cup product}

We can also use Brown's cocycle $\bG_{2n+2}$ to compute the cup product
$$
\Ext^1_\MHS(\Q,\Q(2m-1)) \otimes H^1_\cD(\cG,S^{2n}H(2n+1)) \to 
H^2_\cD(\cG,S^{2n}H(2n+2m))
$$
of the class of $\zeta(2m-1)$ with the class $G_{2n+1}$ of the Eisenstein
series, where we are identifying $H^1_\cD(\cG,\Q(2m-1))$ with
$\Ext^1_\MHS(\Q,\Q(2m-1))$ via the homomorphism from $\cG$ to the trivial group.


One has the projection
$$
\pi_\cusp : H^2_\cD(\cG,S^{2n}H_\Q(2n+2m))
\to \Ext^1_\MHS(\Q,H^1_\cusp(\M_{1,1},S^{2n}\H(2n+2m)))
$$
onto its cuspidal part. One also has the Eisenstein projection
$$
\id - \pi_\cusp :  H^2_\cD(\cG,S^{2n}H_\Q(2n+2m))
\to \Ext^1_\MHS(\Q,\Q(2m-1)).
$$

Denote the class of $\zeta(2m-1)$ in $\Ext^1_\MHS(\Q,\Q(2m-1))$ by $\bZ_{2m-1}$.
It is represented by the Deligne cocycle
$$
\begin{bmatrix}
& \zeta(2m-1) \cr 0 && 0
\end{bmatrix}.
$$

\begin{proposition}
\label{prop:external_cup}
The cuspidal projection of
$$
\bZ_{2m-1} \cup \bG_{2n+2} \in H^2_\cD(\cG,S^{2n}H(2n+2m))
$$
is trivial. Its Eisenstein projection is
$$
\bZ_{2m-1} \in \Ext^1_\MHS(\Q,\Q(2m-1)).
$$
\end{proposition}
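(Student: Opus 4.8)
The plan is to evaluate $\bZ_{2m-1}\cup\bG_{2n+2}$ at the level of Deligne cochains, using the representatives already fixed in this section, and then to read off the two projections from the Eichler--Shimura decomposition of $H^1(\cG,S^{2n}H)$. First I would form the product with the formula for $\cup_0$ given in the subsection on products. The class $\bZ_{2m-1}$ is represented by the cocycle $\begin{bmatrix} & \zeta(2m-1) \cr 0 && 0\end{bmatrix}$, whose de~Rham entry $w'$ and Betti entry $z'$ both vanish, so in the formula for $\cup_0$ the terms $w'\cup w''$, $z'\cup z''$ and $(-1)^{|w'|}w'\cup c''$ all drop out and only $c_0=c'\cup z''$ survives. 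Since $\bG_{2n+2}$ is represented by the cocycle with Betti entry $z''=(2\pi i)^{2n+1}e_{2n+2}^0(\bb,\aa)$, this shows that $\bZ_{2m-1}\cup\bG_{2n+2}$ is represented by the $2$-cocycle
$$
\begin{bmatrix}
& \zeta(2m-1)\,(2\pi i)^{2n+1}e_{2n+2}^0(\bb,\aa) & \cr
0 && 0
\end{bmatrix},
$$
which is $\delta$-closed because $dz''=0$.

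Next I would identify the ambient group. As recorded above, $\G H^2(\cG,S^{2n}H(2n+2m))=0$, so the exact sequence (\ref{eqn:ses_db}) gives a canonical isomorphism $H^2_\cD(\cG,S^{2n}H(2n+2m))\cong\Ext^1_\MHS(\Q,H^1(\cG,S^{2n}H(2n+2m)))$, under which a Deligne $2$-cocycle with vanishing de~Rham and Betti parts represents, via Carlson's description, the extension class of $[c]\in H^1(\cG,S^{2n}H(2n+2m))_\C$ modulo $F^0$ and the rational subspace. Thus $\bZ_{2m-1}\cup\bG_{2n+2}=\zeta(2m-1)\,[z'']$, where $[z'']$ is the image of $\bG_{2n+2}$ under $H^1_\cD(\cG,S^{2n}H(2n+1))\to\G H^1(\cG,S^{2n}H(2n+1))$; since $e_{2n+2}^0$ is rational and the factor $(2\pi i)^{2n+1}$ is exactly the Betti normalization of the Tate twist, $[z'']$ is a rational Hodge class.

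Now I would invoke Eichler--Shimura and Manin--Drinfeld to split $H^1(\cG,S^{2n}H)=H^1_\eis\oplus H^1_\cusp$ as a direct sum of mixed Hodge structures, with $H^1_\eis\cong\Q(-2n-1)$ and $H^1_\cusp=\bigoplus_{f}V_f$ pure of weight $2n+1$. After twisting by $(2n+1)$ the cuspidal summands acquire negative weight, so $\G H^1_\cusp(2n+1)=0$, and the generator $[z'']$ of $\G H^1(\cG,S^{2n}H(2n+1))=\G\,\Q(0)=\Q$ lies entirely in the Eisenstein line $H^1_\eis(2n+1)=\Q(0)$. Hence $[c]=\zeta(2m-1)[z'']$ has no cuspidal component in $H^1(\cG,S^{2n}H(2n+2m))_\C$. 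Since $\pi_\cusp$ is the map induced by applying $\Ext^1_\MHS(\Q,\blank)$ to the projection $H^1\to H^1_\cusp$, and this projection annihilates the Eisenstein part, we conclude $\pi_\cusp(\bZ_{2m-1}\cup\bG_{2n+2})=0$, which is the first assertion.

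Finally, for the Eisenstein projection I would use that $[z'']$ is a rational generator of $H^1_\eis(2n+1)=\Q(0)$, so that under the canonical identification $H^1_\eis(2n+2m)=\Q(2m-1)$ one has $[c]=\zeta(2m-1)[z'']\mapsto\zeta(2m-1)$, giving $(\id-\pi_\cusp)(\bZ_{2m-1}\cup\bG_{2n+2})=[\zeta(2m-1)]=\bZ_{2m-1}$ in $\Ext^1_\MHS(\Q,\Q(2m-1))$. The only content beyond this bookkeeping is a normalization check, which is where I expect the (modest) obstacle to lie: one must verify that the generator $\bG_{2n+2}$ maps to precisely the rational generator $1$ of $\G\,\Q(0)=\Hom_\MHS(\Q,\Q(0))$ used to define the Eisenstein identification, and not to a nonzero rational multiple of it. This is settled by the normalization of $e_{2n+2}^0$ as the rational part of the period polynomial together with the compatibility of $\id-\pi_\cusp$ with the Betti structure.
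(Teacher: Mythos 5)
Your proposal is correct and takes essentially the same route as the paper: both compute the $\cup_0$ product of the explicit Deligne cocycles, note that since $w'=z'=0$ only the term $c'\cup z''=\zeta(2m-1)(2\pi i)^{2n+1}e^0_{2n+2}(\bb,\aa)$ survives, and then read off the two projections through the identification $H^2_\cD(\cG,S^{2n}H(2n+2m))\cong\Ext^1_\MHS(\Q,H^1(\cG,S^{2n}H(2n+2m)))$. The only difference is one of detail: the paper simply asserts that the resulting class has trivial cuspidal projection and "therefore corresponds to $\zeta(2m-1)$ mod $(2\pi i)^{2m-1}\Q$," whereas you spell out the justification via the Manin--Drinfeld splitting, the weight argument showing $\G H^1_\cusp(\cG,S^{2n}H_\Q(2n+1))=0$, and the rational normalization of the Eisenstein identification (via the Hodge class $[z'']=(2\pi i)^{2n+1}[e^0_{2n+2}]$) that the paper uses implicitly.
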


\begin{proof}
The cup product is represented by
\begin{align*}
&\phantom{xx}\begin{bmatrix}
& \zeta(2m-1)\cr 0 && 0
\end{bmatrix}
\cup_0
\begin{bmatrix}
\xymatrix@C=-24pt@R=0pt{
& -\frac{(2n)!}{2}\zeta(2n+1)\aa^{2n} \cr
I_{[G_{2n+2}]}(\bb,\aa) && (2\pi i)^{2n+1} e_{2n+2}^0(\bb,\aa)
}
\end{bmatrix}
\cr
&=
\begin{bmatrix}
\xymatrix@C=0pt@R=0pt{
& (2\pi i)^{2n+1}\zeta(2m-1)e_{2n+2}^0(\bb,\aa)
\cr
0 & & 0
}
\end{bmatrix}
\end{align*}
This has trivial projection under $\pi_\cusp$. The top entry lies in $\C
e_{2n+2}^0(\bb,\aa)$ and is well-defined mod $(2\pi
i)^{2m+2n}e_{2n+2}^0(\bb,\aa)$. It therefore corresponds to $\zeta(2m-1)$ mod
$(2\pi i)^{2m-1}\Q$ in $\Ext^1_\MHS(\Q,\Q(2m-1))$.
\end{proof}

\section{Lie Algebra Aspects}

One can also define the Deligne--Beilinson cohomology of a Lie algebra with MHS.
This was done in the unipotent case in \cite{carlson-hain} where it was related
to extensions of unipotent variations of MHS. Here we briefly consider the
general case and its relation to the DB-cohomology of affine groups.

\subsection{Generalities}

Suppose that $\h$ is a Lie algebra over the field $\F$ of characteristic zero.
Denote its enveloping algebra by $U\h$. The cohomology of $\h$ with coefficients
in an $\h$-module $V$ is defined by
$$
H^\dot(\h,V) := \Ext^\dot_{U\h}(\F,V).
$$
It can be computed using the standard projective resolution $U\h \otimes
\Lambda^\dot \h \to \F$, which computes $H^\dot(\h,V)$ from the
Chevalley--Eilenberg cochains
$$
\cC^\dot(\h,V) = \Hom_\F(\Lambda^\dot \h,V) \cong
\Hom_{U\h}(U\h \otimes \Lambda^\dot \h,V)
$$
with its standard differential. (See \cite{cartan-eilenberg} for details.)

If $\h$ is a topological Lie algebra (such as a pronilpotent Lie algebra) which
is topologized as the limit of a projective system of Lie algebras $\h_\alpha$,
we define the continuous cohomology $H^\dot_\cts(\h,V)$ of $\h$ 
by
$$
H^\dot_\cts(\h,V) = \varinjlim_\alpha H^\dot(\h_\alpha,V).
$$
It can be computed using the complex
$$
\cC^\dot_\cts(\h,V) := \varinjlim_\alpha  \cC^\dot(\h_\alpha,V)
$$
of continuous cochains.

\subsection{Hodge theory}

Suppose that $\h$ is a (finite dimensional) Lie algebra in $\MHS_\F$. A  Hodge
representation of $\h$ is a finite dimensional $\h$-module $V$ endowed with an
$\F$-MHS for which the action $\h\otimes V \to V$ is a morphism of MHS. Denote
the category of Hodge representations of $\h$ by $\HRep(\h)$. When $V$ is a
Hodge representation of $\h$, the Chevalley--Eilenberg complex $\cC^\dot(\h,V)$
is a complex in $\MHS_\F$, which implies that $H^\dot(\h,V)$ has a MHS.

More generally, suppose that $\h$ is a topological Lie algebra which is
topologized as the inverse limit of a projective system $\{\h_\alpha\}$ of Lie
algebras in $\MHS_\F$. We will refer to such a Lie algebra as a Lie algebra in
pro-$\MHS_F$. Define $\HRep(\h)$ by
$$
\HRep(\h) = \varinjlim_\alpha \HRep(\h_\alpha).
$$
For each $V$ in $\HRep(\h)$, the continuous cochain complex
$\cC^\dot_\cts(\h,V)$ is a complex in ind-$\MHS_\F$.

\begin{definition}
The Deligne--Beilinson cohomology $H_\cD^\dot(\h,V)$ of a Lie algebra $\h$ in
pro-$\MHS_\F$ and coefficients in an object $V$ of $\HRep(\h)$ is defined to be
the cohomology of the complex
$$
\cone\big(F^0W_0\cC^\dot_\cts(\h,V)_\C \oplus W_0 \cC^\dot_\cts(\h,V)_\F
\to W_0\cC^\dot_\cts(\h,V)_\C\big)[-1].
$$
\end{definition}

Standard arguments imply that, for all $j\ge 0$, there is a short exact sequence
$$
0 \to \Ext^1_\MHS\big(\F,H^{j-1}(\h,V)\big) \to H^j_\cD(\h,V)
\to \Hom_\MHS\big(\F,H^j(\h,V)\big) \to 0.
$$

\subsection{Relation to group cohomology}

Suppose that $G$ is an affine (and therefore proalgebraic) $\F$-group. Its Lie
algebra $\g$ is the inverse limit of a finite dimensional Lie algebras. The
functor $\Rep(G) \to \Rep^\cts(\g)$ induces a map
\begin{equation}
\label{eqn:grp-la}
H^\dot(G,V) \to H^\dot_\cts(\g,V)
\end{equation}
for all $V$ in $\Rep(G)$. This is an isomorphism when $G$ is prounipotent. This
map can be realized on the chain level by choosing an injective resolution $V
\hookrightarrow I^\dot$ of $V$ in $\Rep(G)$. Since the complex $I^\dot$ is a
resolution of $V$ in $\Rep(\g)$, the induced chain map
$$
\cC^\dot(\Hom_\F^\cts(\Lambda^\dot\g,V)) \hookrightarrow
\tot\Hom_\F^\cts(\Lambda^\dot\g,I^\dot)
$$
is a quasi-isomorphism. So the right hand complex computes $H^\dot_\cts(\g,V)$.
The inclusion
$$
(I^\dot)^G \hookrightarrow \tot\Hom_\F^\cts(\Lambda^\dot\g,I^\dot)
$$
is a chain map which induces (\ref{eqn:grp-la}).

Denote the maximal ideal of $\cO(G)$ of functions that vanish at the identity by
$\m_e$. The Lie algebra $\g$ of $G$
is
$$
\g = \Hom_\F(\m_e/\m_e^2,\F).
$$
Its bracket is dual to the skew symmetrization $\m_e/\m_e^2 \to
(\m_e/\m_e^2)^{\otimes 2}$ of the reduced diagonal $\m_e \to \m_e\otimes\m_e$.
This implies that if $G$ has an $\F$-MHS, then its Lie algebra is a Lie algebra
in the category pro-$\MHS_\F$ and that the restriction functor $\Rep(G) \to
\Rep^\cts(\g)$ takes Hodge representations to Hodge representations.

\begin{proposition}
If $G$ is an affine group with an $\F$-MHS, then for all Hodge representations
$V$ of $G$, the natural map (\ref{eqn:grp-la}) is a morphism of MHS and there is
a natural map
$$
H^\dot_\cD(G,V) \to H^\dot_\cD(\g,V)
$$
which is an isomorphism when $G$ is prounipotent.
\end{proposition}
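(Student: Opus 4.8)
The plan is to realize both Deligne--Beilinson theories, and the map (\ref{eqn:grp-la}) between the underlying cohomologies, at the level of complexes in ind-$\MHS_\F$, and then to apply the Deligne cone construction $A^\dot \mapsto A^\dot_\cD$ functorially. First I would fix a relatively injective resolution $V \hookrightarrow I^\dot$ of $V$ in $\HRep(G)$, say the standard one $I^\dot = V\otimes\cO(G)^{\otimes\dot}$. By Proposition~\ref{prop:qism} the complex $(I^\dot)^G$ is a complex in ind-$\MHS_\F$ computing $H^\dot(G,V)$ as an ind-MHS, and by the remark following the definition of DB-cohomology it computes $H^\dot_\cD(G,V)$ after passing to $(I^\dot)^G_\cD$. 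On the Lie side each $\cC^p_\cts(\g,I^q) = \Hom_\F^\cts(\Lambda^p\g,I^q)$ is an ind-$\MHS_\F$, since $\g$ lies in pro-$\MHS_\F$ and $I^q$ in ind-$\MHS_\F$ and both the Chevalley--Eilenberg and the $I^\dot$ differentials are morphisms of MHS; hence $\tot\cC^\dot_\cts(\g,I^\dot)$ is a complex in ind-$\MHS_\F$. As recalled just before the statement, the inclusion $(I^\dot)^G \hookrightarrow \tot\cC^\dot_\cts(\g,I^\dot)$ realizes (\ref{eqn:grp-la}); it factors through the bottom row $\cC^0_\cts(\g,I^\dot) = I^\dot$ via the sub-MHS inclusions $(I^q)^G \hookrightarrow I^q$, so it is a morphism of complexes in ind-$\MHS_\F$. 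This already proves that (\ref{eqn:grp-la}) is a morphism of MHS.

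The key technical input I would establish is that $A^\dot \mapsto A^\dot_\cD$ sends quasi-isomorphisms of complexes in ind-$\MHS_\F$ to quasi-isomorphisms. This follows from the strictness of morphisms of MHS with respect to $W_\dot$ and $F^\dot$ (the same exactness of $\Gr^W_\dot$ and $\Gr_F^\dot$ used in Lemma~\ref{lem:lifts}): the three functors $A \mapsto F^0W_0 A_\C$, $A\mapsto W_0 A_\F$ and $A \mapsto W_0 A_\C$ are exact on ind-$\MHS_\F$, so a quasi-isomorphism induces quasi-isomorphisms on all three terms of the defining cone, hence on $A^\dot_\cD$ by the five lemma. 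Applying this to the morphism of ind-MHS complexes $\cC^\dot_\cts(\g,V) \to \tot\cC^\dot_\cts(\g,I^\dot)$ --- a quasi-isomorphism because $I^\dot$ is a resolution and $\Hom_\F^\cts(\Lambda^\dot\g,-)$ is exact --- shows that $H^\dot_\cD(\g,V)$ is equally computed by $\big(\tot\cC^\dot_\cts(\g,I^\dot)\big)_\cD$. Thus the zigzag
$$
(I^\dot)^G \longrightarrow \tot\cC^\dot_\cts(\g,I^\dot) \longleftarrow \cC^\dot_\cts(\g,V)
$$
of complexes in ind-$\MHS_\F$ yields, after applying the cone construction and inverting the (quasi-isomorphic) right-hand leg, a natural homomorphism $H^\dot_\cD(G,V) \to H^\dot_\cD(\g,V)$. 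By construction it carries the exact sequence (\ref{eqn:ses_db}) for $G$ to the corresponding sequence for $\g$, with the outer maps induced by (\ref{eqn:grp-la}).

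For the prounipotent case, I would argue as follows. When $G$ is prounipotent, (\ref{eqn:grp-la}) is an isomorphism, and being a morphism of MHS between ind-MHS it is an isomorphism of ind-MHS. Hence the left-hand leg $(I^\dot)^G \to \tot\cC^\dot_\cts(\g,I^\dot)$ is itself a quasi-isomorphism of complexes in ind-$\MHS_\F$, so applying the cone construction and invoking the quasi-isomorphism-preservation property once more shows that $H^\dot_\cD(G,V) \to H^\dot_\cD(\g,V)$ is an isomorphism. (Alternatively, one may apply the five lemma to the induced map between the two copies of (\ref{eqn:ses_db}), whose outer terms are isomorphisms because they depend only on the ind-MHS $H^\dot(G,V) \cong H^\dot_\cts(\g,V)$.)

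The hard part will not be any single deep step but the bookkeeping required to guarantee that every complex in sight genuinely lives in ind-$\MHS_\F$ and that all structural maps are morphisms of MHS; this is precisely where one must use a \emph{relatively injective} resolution, so that $(I^\dot)^G$ carries the correct MHS through Proposition~\ref{prop:qism}, together with the strictness of $\Gr^W_\dot$ and $\Gr_F^\dot$ that underlies the quasi-isomorphism-preservation property of the cone construction. Once that property is in hand, both the construction of the map and the prounipotent isomorphism are formal.
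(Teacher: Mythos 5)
Your proposal is correct, but it takes a different route from the paper. The paper's proof is more direct: it writes down a single explicit chain map from the group cochains to the Lie algebra cochains,
$$
(I^\dot)^G = V\otimes\cO(G)^{\otimes \dot} \longrightarrow
\Hom_\F^\cts(\Lambda^\dot\g,V),
\qquad
v\otimes f_1\otimes\cdots\otimes f_n \mapsto
v\otimes(df_1\wedge\cdots\wedge df_n),
$$
i.e.\ differentiation of cochains at the identity, observes that this is a morphism of complexes in ind-$\MHS_\F$ inducing (\ref{eqn:grp-la}), and then simply applies the Deligne cone to this one map to get $H^\dot_\cD(G,V)\to H^\dot_\cD(\g,V)$ --- no zigzag and no inversion of quasi-isomorphisms is required. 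You instead promote the zigzag $(I^\dot)^G \to \tot\Hom_\F^\cts(\Lambda^\dot\g,I^\dot) \leftarrow \cC^\dot_\cts(\g,V)$, already used in the paper to realize (\ref{eqn:grp-la}) on the chain level, to a diagram of complexes in ind-$\MHS_\F$, and then invert the right-hand leg; this forces you to prove the auxiliary lemma that $A^\dot\mapsto A^\dot_\cD$ preserves quasi-isomorphisms, which is sound (it follows from the exactness of $A\mapsto W_0A_\F$, $A\mapsto F^0W_0A_\C$ on $\MHS_\F$, the same strictness facts behind Lemma~\ref{lem:lifts} and behind the paper's remark that DB-cohomology can be computed from any relatively injective resolution). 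What your route buys is that the prounipotent case then comes for free --- once (\ref{eqn:grp-la}) is an isomorphism, the left leg is a quasi-isomorphism of ind-MHS complexes and the same lemma (or the five lemma on the two copies of (\ref{eqn:ses_db})) gives the isomorphism on DB-cohomology; the paper's printed proof constructs the map but leaves this last claim to exactly such an implicit five-lemma argument. What the paper's route buys is economy and explicitness: one concrete formula, no derived-category-style inversion, at the cost of verifying that the differentiation map is a chain map compatible with the MHS.
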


\begin{proof}
Let $V\to I^\dot$ be the standard injective resolution of $V$ in $\Rep(G)$. The
linear map $(I^\dot)^G = V\otimes \cO(G)^{\otimes n} \to
\Hom_\F(\Lambda^{n}\g,V)$ defined by
$$
v\otimes f_1 \otimes \dots \otimes f_n \mapsto
v\otimes (df_1\wedge \dots \wedge df_n)
$$
is a chain map that induces the natural map $H^\dot(G,V) \to H^\dot(\g,V)$. It
is a morphism of MHS, which implies that the natural map is a morphism of MHS.
It also induces a map of Deligne cochains, and therefore a map of Deligne
cohomology.
\end{proof}

When $G$ is not prounipotent, the homomorphism (\ref{eqn:grp-la}) is generally
not an isomorphism. To consider the difference, we adopt the natural hypotheses
of Section~\ref{sec:nat-hypoth}. Namely, $G = W_0 G$ and that $R:=\Gr^W_0 G$ is
reductive. In this case, the restriction map $H^\dot(G,V) \to H^\dot(U,V)$ is an
inclusion of MHS which induces an isomorphism of MHS 
$$
H^\dot(G,V) \overset{\simeq}{\longrightarrow} H^\dot(U,V)^R.
$$
We can thus express $H^\dot_\cD(G,V)$ as an extension
$$
0 \to \Ext^1_\MHS(\F,H^{j-1}(U,V)^R) \to H^j_\cD(G,V) \to \G H^j(U,V)^R \to 0.
$$
This sequence can also be deduced from the Hochschild-Serre spectral sequence
of the extension
$$
1 \to U \to \pi_1(\MHS)\ltimes G \to \pi_1(\MHS)\ltimes R \to 1.
$$
It seems that the group $R$ does not (in general) act on $H^j_\cD(U,V)$, so
it does not make sense to write $H^\dot_\cD(G,V) = H^\dot_\cD(U,V)^R$.

\section{The Eisenstein Projection of the Cup Product}
\label{sec:eis_projn}

Here we give an alternative approach to Brown's computation
\cite[Thm.~10.1]{brown:mmv} of the Eisenstein projection of $\bG_{2n+2m}\cup
\bG_{2m}$. We do this by restricting to the punctured $q$-disk, where the
variations of MHS that correspond to the $\bG_{2k}$ become unipotent. This
allows us to use the Lie algebra version of DB-cohomology explained in the
previous section. While not essential, it does simplify the computation.

The inclusion $\D^\ast \to \M_{1,1}$ of the punctured $q$-disk into $\M_{1,1}$
induces a homomorphism
$$
\pi_1(\D^\ast,\partial/\partial q) \to \pi_1(\M_{1,1}^\an,\partial/\partial q)
\cong \SL(H_\Z)
$$
whose image is the unipotent element $T$ whose logarithm is the nilpotent
endomorphism $N=-\aa\partial/\partial \bb$. The unipotent completion of
$\G_\infty := \pi_1(\D^\ast,\partial/\partial q)$ is a unipotent group
isomorphic to $\Ga$ whose Lie algebra $\g_\infty$ is isomorphic to $\Q(1)$ in
$\MHS$. The inclusion $\G_\infty \hookrightarrow \G$ induces a homomorphism
$\cG_\infty \hookrightarrow \cG$ which is a morphism of MHS.

\begin{remark}
Hodge representations of $\cG_\infty$ correspond to admissible variations of MHS
on the punctured tangent space $T_{e_o}'\Mbar_{1,1}:=T_{e_o}\Mbar_{1,1}-\{0\}$
at the cusp $e_o$. These are nilpotent orbits of MHS.
\end{remark}

\subsection{DB-cohomology of $\cG_\infty$}

The Deligne cohomology of $\cG_\infty$ is particularly simple, which makes it
well suited to computations. If $V$ is a $\cG_\infty$ module, then
$$
H^0(\cG_\infty,V) = V^N,\quad H^1(\cG^\infty, V) = (V/NV)(-1),
$$
and all other cohomology groups vanish. If $V$ is an $\SL(H)$-module, viewed as
a $\cG_\infty$-module via the homomorphism, $\cG_\infty \to \cG \to \SL(H)$,
then $V/NV$ is isomorphic to the space of highest weight vectors in $V$.

If $V$ is a Hodge representation of $\cG_\infty$, then there is a natural
isomorphism
$$
H^2_\cD(\cG_\infty,V(r)) \cong \Ext^1_\MHS\big(\Q,(V/NV)(r-1)\big)
\cong V^\DR_\C/\big((2\pi i)^{r-1} V_\Q^B + NV_\C^\DR\big).
$$

\subsection{Lie algebra cochains}

Denote the coordinate in $T_{e_o}\Mbar_{1,1}$ with respect to the basis vector
$\partial/\partial q$ by $q_o$. The coordinate ring of $\cG_\infty^\DR$ is the
polynomial algebra over $\Q$ generated by the iterated integral $[dq_o/q_o]$.
Since
$$
\overbrace{[dq_o/q_o|\dots|dq_o/q_o]}^r = [dq_o/q_o]^r/r!,
$$
the cotangent space of $\cG^\infty$ is spanned by the coset of $[dq_o/q_o]$ mod
iterated integrals of $dq_o/q_o$ of length $\ge 2$. Let $\lambda_\DR$ be the
linear function on the tangent space $\g_\infty^\DR$ of $\cG^\DR_\infty$ at $1$
that takes the value 1 on $[dq_o/q_o]$. Let $\lambda_B = \lambda_\DR/2\pi i$. It
is a linear functional on $\g_\infty^B$.

Suppose that $V$ is a Hodge representation of $\g_\infty$. Then
$$
\cC^\dot(\g_\infty^\DR,V^\DR) = \Lambda^\dot(d\lambda_\DR)\otimes V^\DR
$$
with the differential $\delta : V^\DR \to V^\DR\otimes d\lambda^\DR$ defined by
$v \mapsto N v\, d\lambda_\DR$. The Betti complex $\cC^\dot(\g_\infty^B,V^B)$ is
defined similarly. It has differential $\delta : V^B \to V^B\otimes d\lambda^B$
defined by $\quad v \mapsto N^B v\, d\lambda_B$, where $N_B = N/2\pi i$. When $V
= S^m H$, $N = \aa\partial/\partial \bw$ and $N_B = -\aa\partial/\partial \bb$.

\subsection{Computation of the Eisenstein projection}

Denote by $\bG^\infty_{2k}$ the image of $\bG_{2k}$ under the restriction map
$$
H^1_\cD\big(\cG,S^{2k-2}H(2k-1)\big) \to
H^1_\cD\big(\cG_\infty,S^{2k-2}H(2k-1)\big).
$$

\begin{proposition}
\label{prop:cup_infty}
If $n > 0$ and $m > 1$, then
$$
\bG_{2n+2m}^\infty \cup \bG_{2m}^\infty \in
H^2_\cD\big(\cG_\infty,S^{2n+2m-2}H\otimes S^{2m-2}H(2n+4m-2)\big)
$$
is the coset of
$$
-\zeta(2m-1)\frac{(2m-2)!}{2}\frac{B_{2n+2m}}{4n+4m}
\bw^{2n+2m-2}\otimes\aa^{2m-2}
$$
in
\begin{multline*}
\Ext^1_\MHS\Big(\Q,
H^1(\cG^\infty,S^{2n+2m-2}H\otimes S^{2m-2}H(2n+4m-2)\big)\Big) \cr
\cong \Ext^1_\MHS\big(\Q,[S^{2n+2m-2}H\otimes S^{2m-2}H(2n+4m-3)]/\im N\big)
\cr
\cong \bigoplus_{r=0}^{2m-2}\Ext^1_\MHS(\Q,\Q(r+1)). \hspace*{2.25in}
\end{multline*}
Its projection to 
$$
\Ext^1_\MHS\Big(\Q,H^1\big(\cG^\infty,S^{2n+4m-4-2r}H(2n+4m-2-r)\big)\Big)
\cong \Ext^1_\MHS(\Q,\Q(r+1))
$$
vanishes when $0\le r< 2m-2$.
\end{proposition}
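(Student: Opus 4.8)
The plan is to carry out the entire computation inside the Chevalley--Eilenberg cone complex of the one-dimensional Lie algebra $\g_\infty\cong\Q(1)$, where the descriptions of $H^\dot(\cG_\infty,-)$ and of the Lie algebra cochains recalled above make every cochain explicit. First I would restrict Brown's cocycles for $\bG_{2n+2m}$ and $\bG_{2m}$ to $\cG_\infty$. Restricting a degree-one Deligne cocycle on $\cG$ to $\g_\infty$ amounts to taking the linear (residue) part of each of its three constituents along the loop direction $\partial/\partial q$. For the de~Rham constituent $I_{[G_{2k}]}$ this linear part is the residue at the cusp of $\w_{G_{2k}}=G_{2k}(q)\,\bw^{2k-2}\,dq/q$; since $\bw$ is a trivialising section of the canonical extension, this residue equals the constant term $-B_{2k}/(4k)$ of $G_{2k}$ times $\bw^{2k-2}$, so with the normalisation $\lambda_\DR([dq_o/q_o])=1$ the restricted de~Rham cochain is $w=-\tfrac{B_{2k}}{4k}\,\bw^{2k-2}\otimes d\lambda_\DR$. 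The connecting (degree-zero) term of $\bG_{2m}$ is already a constant and restricts unchanged to $c''=-\tfrac{(2m-2)!}{2}\zeta(2m-1)\,\aa^{2m-2}$.

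Next I would feed these cocycles into the product $\cup_t$ on the cone. The decisive simplification is that $\cC^2(\g_\infty,-)=0$, so in the product $\bigl[\begin{smallmatrix}&c_t&\\ w'\cup w''&&z'\cup z''\end{smallmatrix}\bigr]$ the two degree-two slots $w'\cup w''$ and $z'\cup z''$ vanish automatically, and the entire class in $H^2_\cD(\cG_\infty,-)$ is carried by the connecting term $c_t$. Since all $\cup_t$ agree on cohomology I would take $t=0$, so that $c_0=c'\cup z''-w'\cup c''$ (the sign from $|w'|=1$). The term $-w'\cup c''$ pairs the restricted de~Rham cochain $w'$ of $\bG_{2n+2m}$ with the connecting term $c''$ of $\bG_{2m}$, giving $-w'\cup c''=-\tfrac{B_{2n+2m}}{4n+4m}\cdot\tfrac{(2m-2)!}{2}\zeta(2m-1)\,\bw^{2n+2m-2}\otimes\aa^{2m-2}\otimes d\lambda$, which under $\cC^1(\g_\infty,V)=V\otimes d\lambda\cong V$ is exactly the asserted coset.

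It then remains to dispose of the term $c'\cup z''$ and to prove the vanishing statement; both follow from the same $\sl_2$-weight bookkeeping. Here $c'\propto\aa^{2n+2m-2}$ is the lowest weight vector of $S^{2n+2m-2}H$, so $c'\cup z''$ has $\sl_2$-weights at most $-(2n+2m-2)+(2m-2)=-2n<0$, whereas the highest weight space of the $r$-th Clebsch--Gordan summand $S^{2n+4m-4-2r}H(-r)$ lives in weight $2n+4m-4-2r\ge 2n>0$ (this is where $n>0$ is used). Hence $c'\cup z''$ projects to zero in every summand and never contributes. The surviving vector $\bw^{2n+2m-2}\otimes\aa^{2m-2}$ has weight exactly $2n$, so after applying $p_r$ and passing to $V/NV$ (the highest weight space, by the description of $H^1(\cG_\infty,-)$) its image is nonzero only when $2n+4m-4-2r=2n$, i.e.\ $r=2m-2$; for $r<2m-2$ the projected vector has weight strictly below the highest weight, hence lies in $\im N$ and maps to $0$. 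A Tate-twist count then identifies the surviving $r=2m-2$ component with the class of $\zeta(2m-1)$ in $\Ext^1_\MHS(\Q,\Q(2m-1))$.

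The main obstacle I anticipate is the first step: pinning down the exact constant and the powers of $2\pi i$ in the restricted de~Rham cochain $w$, that is, matching the residue of $\w_{G_{2k}}$ against the normalisations of $\lambda_\DR$ and of the canonical extension, and then verifying that the discarded material — the term $c'\cup z''$ together with the coboundary relating $\cup_0$ to Brown's $\cup_{1/2}$ — lands in the subspace $(2\pi i)^{r-1}V^B_\Q+NV^\DR_\C$ appearing in the denominator of $H^2_\cD(\cG_\infty,V(r))\cong V^\DR_\C/\bigl((2\pi i)^{r-1}V^B_\Q+NV^\DR_\C\bigr)$, so that the final coset is unaffected. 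Once the residue normalisation is fixed, the cup product evaluation and the weight argument are routine.
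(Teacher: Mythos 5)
Your proposal is correct and follows essentially the same route as the paper's proof: restrict Brown's cocycles to $\g_\infty$, form the $\cup_0$ product in the cone complex (where both degree-two slots vanish because $\cC^2(\g_\infty,V)=0$, so the class is carried by the connecting term), identify $-w'\cup c''$ as the asserted coset, and use $\sl(H)$-weight bookkeeping to show that $c'\cup z''$ lies in $\im N$ and that the projection of $\bw^{2n+2m-2}\otimes\aa^{2m-2}$ survives only for $r=2m-2$. The only cosmetic difference is that you derive the restricted de~Rham cochain by a residue computation at the cusp, whereas the paper states the restricted cocycle directly.
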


To compute the projection of $\bG_{2n+2m}^\infty\cup \bG_{2m}^\infty$ to
$H^2_\cD(\cG_\infty,S^{2n}H(2n+2m))$, we need to specify an $\SL(H)$-invariant
projection $S^{2n+2m-2}H\otimes S^{2m-2}H \to S^{2n}H$. To this end, define
$$
\partial : S^aH(a)\otimes S^b H(b) \to S^{a-1}H\otimes S^{b-1}H(a+b-1)
$$
to be the operator
$$
\partial :=
\partial/\partial\bw\otimes \partial/\partial \aa
- \partial/\partial\aa\otimes \partial/\partial \bw
= (2\pi i)^{-1}
(\partial/\partial\aa\otimes \partial/\partial \bb
- \partial/\partial\bb\otimes \partial/\partial \aa).
$$
It is $\SL(H)$ equivariant and preserves the $\Q$-DR and $\Q$-Betti structures.

Identify $S^{2n}H$ with $S^{2n}H\otimes 1$. Then $\partial^{2m-2}$ is an
$\SL(H)$-invariant operator
$$
\partial^{2m-2} : S^{2n+2m-2}H\otimes S^{2m-2}H(2n+4m-2) \to S^{2n}H(2n+2m).
$$
Since
$$
\partial^{2m-2}(\bw^{2n+2m-2}\otimes\aa^{2m-2})
= (2m-2)!\frac{(2n+2m-2)!}{(2n)!} \bw^{2n}\otimes 1,
$$
and since the Eisenstein projection (\ref{eqn:projnsQ}) is just the restriction
mapping
\begin{multline*}
H^2_\cD(\cG,S^{2n}H(r)) \to H^2_\cD(\cG_\infty,S^{2n}H(r)) 
\cr
\cong H^2_\cD(\cG_\infty,S^{2n}H(r)) \cong \Ext^1_\MHS(\Q,\Q(2n+r)),
\end{multline*}
we have:

\begin{corollary}
If $n > 0$ and $m > 1$, then
$$
\partial^{2m-2} (\bG_{2n+2m}\cup \bG_{2m})
\in H^2_\cD(\cG_\infty,S^{2n}H(2n+2m))
$$
is the coset of
$$
-\zeta(2m-1)\frac{[(2m-2)!]^2}{2}
\frac{(2n+2m-2)!}{(2n)!}\frac{B_{2n+2m}}{4n+4m}
\bw^{2n}
$$
in
\begin{multline*}
\Ext^1_\MHS\Big(\Q,
H^1(\cG^\infty,S^{2n}H(2n+2m)\big)\Big) \cr
\cong \Ext^1_\MHS\big(\Q,[S^{2n}H(2n+2m-1)]/\im N\big)
\cong \Ext^1_\MHS(\Q,\Q(2m-1)). \qed
\end{multline*}
\end{corollary}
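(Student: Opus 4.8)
The plan is to obtain the statement as a direct push-forward of Proposition~\ref{prop:cup_infty} along the $\SL(H)$-invariant projection $\partial^{2m-2}$, followed by a collection of constants. First I would record that restriction of Deligne--Beilinson cohomology classes along the inclusion $\cG_\infty \hookrightarrow \cG$ is compatible with cup products: the map $\cC^\dot(\cG,V) \to \cC^\dot(\cG_\infty,V)$ induced on Deligne cochains is a morphism of complexes in ind-$\MHS_\F$ that intertwines the natural products $\cup_t$, so it is a product-preserving homomorphism on $H^\dot_\cD$. Since the Eisenstein projection (\ref{eqn:projnsQ}) is, as noted just above, exactly this restriction map, it follows that
$$
\partial^{2m-2}(\bG_{2n+2m}\cup \bG_{2m})\big|_{\cG_\infty}
= \partial^{2m-2}\big(\bG_{2n+2m}^\infty\cup \bG_{2m}^\infty\big),
$$
the coefficient morphism $\partial^{2m-2}$ commuting with restriction because it is functorial in the group variable.

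Next I would check that $\partial^{2m-2}$ induces a morphism on $H^2_\cD$ compatible with all the relevant structures. The operator $\partial$ is $\SL(H)$-equivariant, and the $\cG_\infty$-action on the coefficients factors through $\SL(H)$, so $\partial$ commutes with the monodromy logarithm $N$; since it also preserves the $\Q$-de~Rham and $\Q$-Betti lattices, it is a morphism of Hodge representations of $\cG_\infty$. Hence it induces a map on $H^2_\cD(\cG_\infty,-)$ compatible with the identification $H^2_\cD(\cG_\infty,V(r))\cong \Ext^1_\MHS(\Q,(V/NV)(r-1))$ from the preceding subsection. Applying it to the coset representative supplied by Proposition~\ref{prop:cup_infty}, namely $-\zeta(2m-1)\tfrac{(2m-2)!}{2}\tfrac{B_{2n+2m}}{4n+4m}\,\bw^{2n+2m-2}\otimes \aa^{2m-2}$, and substituting the monomial evaluation
$$
\partial^{2m-2}\big(\bw^{2n+2m-2}\otimes \aa^{2m-2}\big)
= (2m-2)!\,\frac{(2n+2m-2)!}{(2n)!}\,\bw^{2n}\otimes 1
$$
recorded above, yields at once the coset of
$$
-\zeta(2m-1)\,\frac{[(2m-2)!]^2}{2}\,\frac{(2n+2m-2)!}{(2n)!}\,\frac{B_{2n+2m}}{4n+4m}\,\bw^{2n},
$$
which is the claimed representative.

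Finally I would verify that this coset lies in the stated target. The isomorphisms
$$
\Ext^1_\MHS\big(\Q,H^1(\cG^\infty,S^{2n}H(2n+2m))\big)
\cong \Ext^1_\MHS\big(\Q,[S^{2n}H(2n+2m-1)]/\im N\big)
\cong \Ext^1_\MHS(\Q,\Q(2m-1))
$$
come from $H^1(\cG_\infty,V)=(V/NV)(-1)$ together with the fact that $S^{2n}H/\im N$ is the highest-weight line spanned by $\bw^{2n}$, which after the twist is of Tate type $\Q(2m-1)$; the representative above is $\bw^{2n}$ times the indicated scalar, so it determines precisely the asserted class. The only point demanding care is the bookkeeping of Tate twists through $\partial^{2m-2}$ and through $H^1(\cG_\infty,V(r))\cong (V/NV)(r-1)$; the hypotheses $n>0$ and $m>1$ are inherited from Proposition~\ref{prop:cup_infty} and ensure the highest-weight line survives and the projections are defined. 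No input beyond Proposition~\ref{prop:cup_infty} and the equivariance of $\partial$ is needed, so this twist-normalization is the sole (minor) obstacle.
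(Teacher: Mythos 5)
Your proposal is correct and follows essentially the same route as the paper: the corollary is stated there with an immediate \qed precisely because it is obtained by restricting to $\cG_\infty$ (using that the Eisenstein projection is the restriction map), applying the $\SL(H)$-equivariant operator $\partial^{2m-2}$ to the coset representative from Proposition~\ref{prop:cup_infty}, and substituting the monomial evaluation $\partial^{2m-2}(\bw^{2n+2m-2}\otimes\aa^{2m-2}) = (2m-2)!\,\tfrac{(2n+2m-2)!}{(2n)!}\,\bw^{2n}\otimes 1$. Your added checks (compatibility of restriction with $\cup_t$, equivariance of $\partial$ with respect to $N$ and the lattices, and the twist bookkeeping) merely make explicit what the paper leaves implicit.
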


\begin{proof}[Proof of Proposition~\ref{prop:cup_infty}]

The restriction of the cocycle $\bG_{2k}$ to $\cG^\infty$ is
$$
\begin{bmatrix}
\xymatrix@C=-24pt@R=0pt{
& -\frac{(2k-2)!}{2}\zeta(2k-1)\aa^{2k-2} \cr
-\frac{B_{2k}}{4k}\bw^{2k-2}[dq_o/q_o] &&
-\frac{B_{2k}}{4k}(2\pi i\bb)^{2k-2} [dq_o/q_o]
}
\end{bmatrix}.
$$
Consequently, the corresponding 1-cocycle on $\g_\infty$ is
$$
Z_{2k}^\infty :=
\begin{bmatrix}
\xymatrix@C=-24pt@R=0pt{
& -\frac{(2k-2)!}{2}\zeta(2k-1)\aa^{2k-2} \cr
-\frac{B_{2k}}{4k}\bw^{2k-2}d\lambda_\DR &&
-2\pi i\frac{B_{2k}}{4k}(2\pi i\bb)^{2k-2} d\lambda_B
}
\end{bmatrix}.
$$
The class of $\bG_{2n+2m}\cup\bG_{2m}$ is represented by
$$
Z_{2n+2m}^\infty \cup_0 Z_{2m}^\infty
=
\begin{bmatrix}
& C \cr
0 && 0
\end{bmatrix}
$$
where
\begin{multline*}
C = \bigg(
\frac{(2n+2m-2)!}{2}\frac{B_{2m}}{4m}\zeta(2n+2m-1)\,
\aa^{2n+2m-2}\otimes\bw^{2m-2}
\cr
- \frac{(2m-2)!}{2}\frac{B_{2n+2m}}{4n+4m}\zeta(2m-1)\,
\bw^{2n+2m-2}\otimes\aa^{2m-2}
\bigg)\,d\lambda_\DR.
\end{multline*}
Since the set of $\sl(H)$ weights of highest weight vectors in
$S^{2n+2m-2}H\otimes S^{2m-2}$ is $\{2n,2n+2,\dots,2n+2m-2\}$, and since
$\aa^{2n+2m-2}\otimes\bw^{2m-2}$ has weight $-2n$, it is in the image of $N$. So
the class of $C$ in $H^1(\cG_\infty,S^{2n+2m-2}H\otimes S^{2m-2})$ is also
represented by
$$
C'= - \frac{(2m-2)!}{2}\frac{B_{2n+2m}}{4n+4m}\zeta(2m-1)\,
\bw^{2n+2m-2}\otimes\aa^{2m-2}\,d\lambda_\DR.
$$
Since this vector has $\sl(H)$-weight $2n$, its projection to $S^{2n+4m-4-2r}H$
is trivial, except possibly when $r=2m-2$.
\end{proof}

\subsection{Comparison with $\bZ_{2m-1}\cup \bG_{2n+2}^\infty$}

The classes $\bG_{2n+2m}^\infty\cup \bG_{2m}^\infty$ and $\bZ_{2m-1}\cup
\bG_{2n+2}^\infty$ both live in $H^2_\cD(\cG_\infty,S^{2n}H(2n+2m))$. In order
to compare them, we will use the $\SL(H)$-invariant embedding
$$
D^\ell : S^k H \to S^{k+\ell}H\otimes S^\ell H(\ell)
$$
defined by
$$
f(\aa,\bw) \mapsto
(\bw\otimes\aa-\aa\otimes\bw)^{\ell} f(\aa,\bw)\otimes 1,
$$
which preserves the $\Q$-DR and $\Q$-Betti structures. The case of interest
to us is when $\ell = 2m-2$ and $k=2n$.

\begin{lemma}
\label{lemma:h_wt}
For all $k,\ell\ge 0$, we have
$$
D^\ell(\bw^k\otimes 1) \equiv  
\frac{k+\ell+1}{k+1}\, \bw^{k+\ell}\otimes \aa^{\ell} \bmod \im N.
$$
\end{lemma}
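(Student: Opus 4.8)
The plan is to reduce everything to an explicit linear-algebra computation in the $\sl(H)$-weight $k$ subspace of $V:=S^{k+\ell}H\otimes S^\ell H$. The key structural fact is that $N$ lowers the $\sl(H)$-weight by $2$, so that the only part of $\im N$ sitting in weight $k$ is $N$ applied to the weight-$(k+2)$ subspace; consequently the weight-$k$ piece of $V/(\im N)$ is one dimensional and I only need to pin down a single scalar.

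First I would expand $D^\ell$ by the binomial theorem. Because $\bw\otimes\aa$ and $\aa\otimes\bw$ commute in $S^\dot H\otimes S^\dot H$, one obtains
$$
D^\ell(\bw^k\otimes 1)=(\bw\otimes\aa-\aa\otimes\bw)^\ell(\bw^k\otimes 1)=\sum_{i=0}^{\ell}\binom{\ell}{i}(-1)^i\,m_i,
$$
where $m_i:=\bw^{k+\ell-i}\aa^i\otimes\bw^i\aa^{\ell-i}$ are exactly the $\ell+1$ monomials of weight $k$ and form a basis of $V_k$. Note that $m_0=\bw^{k+\ell}\otimes\aa^\ell$ is the target monomial.

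Next I would apply $N=\aa\,\partial/\partial\bw\otimes 1+1\otimes\aa\,\partial/\partial\bw$ to the weight-$(k+2)$ basis $n_q:=\bw^{k+\ell+1-q}\aa^{q-1}\otimes\bw^q\aa^{\ell-q}$, $q=1,\dots,\ell$. A one-line Leibniz computation gives
$$
N(n_q)=(k+\ell+1-q)\,m_q+q\,m_{q-1}.
$$
Since these $\ell$ vectors span $\im N\cap V_k$, the relations $(k+\ell+1-q)m_q+q\,m_{q-1}\equiv 0$ reduce each $m_q$ to a multiple of $m_0$, yielding
$$
m_q\equiv(-1)^q\,\frac{q!\,(k+\ell-q)!}{(k+\ell)!}\,m_0\pmod{\im N}.
$$

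Substituting into the binomial expansion, the factors $(-1)^i$ cancel and the coefficient of $m_0$ becomes $\frac{\ell!}{(k+\ell)!}\sum_{i=0}^{\ell}\frac{(k+\ell-i)!}{(\ell-i)!}$. The substitution $s=\ell-i$ rewrites this as $\frac{\ell!\,k!}{(k+\ell)!}\sum_{s=0}^{\ell}\binom{k+s}{s}$, and the hockey-stick identity $\sum_{s=0}^{\ell}\binom{k+s}{s}=\binom{k+\ell+1}{\ell}$ collapses the coefficient, after a cancellation of factorials, to $\frac{k+\ell+1}{k+1}$, which is the assertion. There is no serious obstacle here: the content is entirely the bookkeeping of indexing the weight-$k$ monomials correctly and applying the Leibniz rule for the $N$-action on the tensor factors, after which the closing combinatorial sum is standard.
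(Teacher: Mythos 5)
Your proof is correct and follows essentially the same route as the paper: the paper also works with the weight-$k$ monomials (it identifies $S^{k+\ell}H\otimes S^\ell H$ with polynomials in $x,y$, so that your $m_j$ becomes $x^{k+\ell-j}y^j$ and $N$ becomes $\partial/\partial x+\partial/\partial y$), reduces each monomial to a multiple of $x^{k+\ell}$ modulo $\im N$ with exactly your coefficients $(-1)^j\binom{k+\ell}{j}^{-1}$, and then sums the binomial expansion of $(x-y)^\ell x^k$. The only differences are in bookkeeping: the paper establishes the monomial congruences by exhibiting an explicit telescoping $N$-primitive instead of solving your two-term recurrences $N(n_q)=(k+\ell+1-q)m_q+q\,m_{q-1}$, and it evaluates the resulting sum $\sum_{j=0}^\ell\binom{\ell}{j}\big/\binom{k+\ell}{j}=\tfrac{k+\ell+1}{k+1}$ by verifying a recursion rather than by the hockey-stick identity.
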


\begin{proof}
Identify $S^{k+\ell}H$ with polynomials in $x$ of degree $\le k+\ell$ and
$S^\ell H$ with polynomials in $y$ of degree $\le \ell$, where $x^{k+\ell}$
corresponds to the highest weight vector of $S^{k+\ell}H$ and $y^\ell$
corresponds to the highest weight vector of $S^\ell H$. With these conventions,
$D$ corresponds to multiplication by $x-y$, the vector $D^\ell \bw^k$
corresponds to $(x-y)^\ell x^k$, and the nilpotent endomorphism $N$ of
$S^{k+\ell}H\otimes S^\ell H$ corresponds to the derivation
${\partial}/{\partial x} + {\partial}/{\partial y}$.
Since
$$
\bigg(\frac{\partial}{\partial x} + \frac{\partial}{\partial y}\bigg)
\sum_{i=1}^j \frac{(-1)^i}{i}\binom{k+\ell}{i-1}x^{k+\ell-i+1}y^i
= -x^{k+\ell} + (-1)^j\binom{k+\ell}{j} x^{k+\ell-j}y^j,
$$
$$
x^{k+\ell} \equiv (-1)^{j}\binom{k+\ell}{j} x^{k+\ell-j}y^j \bmod \im N.
$$
So
$$
(x-y)^\ell x^k = \sum_{j=0}^\ell(-1)^j \binom{\ell}{j} x^{k+\ell-j}y^j 
\equiv x^{k+\ell}\sum_{j=0}^\ell \binom{\ell}{j}\bigg\slash \binom{k+\ell}{j}
\bmod \im N.
$$
The result now follows from the identity
$$
\sum_{j=0}^\ell \binom{\ell}{j}\bigg\slash \binom{k+\ell}{j}
= \frac{k+\ell+1}{k+1},
$$
which can be proved by showing that the both sides satisfy the recursion
$$
(k+\ell)F(k,\ell) = kF(k-1,\ell) + \ell F(k,\ell-1)
$$
and verifying equality when either $k$ or $\ell$ is zero. It can also be
proved using the binomial transformation.
\end{proof}

\begin{theorem}
If $n > 0$ and $m > 1$, then
$$
\bG_{2n+2m}^\infty\cup \bG_{2m}^\infty
=
\frac{(2m-2)!}{2}\frac{\binom{2n+2}{2}}{\binom{2n+2m}{2}}
\frac{B_{2n+2m}}{B_{2n+2}}D^{2m-2}\big(\bZ_{2m-1}\cup \bG_{2n+2}\big)
$$
in $H^2_\cD\big(\cG,S^{2n+2m-2}H\otimes S^{2m-2}(2n+2m)\big)\cong
\Ext^1_\MHS(\Q,\Q(2m-1))$.
\end{theorem}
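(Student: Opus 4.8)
The plan is to reduce the asserted identity to the matching of a single rational scalar. Both sides lie in $H^2_\cD(\cG_\infty, S^{2n+2m-2}H\otimes S^{2m-2}H(2n+4m-2))$; by Proposition~\ref{prop:cup_infty} the left-hand class is supported entirely in the top summand, the one isomorphic to $\Ext^1_\MHS(\Q,\Q(2m-1))$ and spanned by the $\sl(H)$-highest weight vector $\bw^{2n+2m-2}\otimes\aa^{2m-2}$, while the operator $D^{2m-2}$ carries $S^{2n}H$ into precisely that highest-weight line. Since, by the discussion preceding the Corollary, the restriction $H^2_\cD(\cG,-)\to H^2_\cD(\cG_\infty,-)$ is on this Eisenstein component exactly the projection $\pi_\eis$ used to identify the target with $\Ext^1_\MHS(\Q,\Q(2m-1))$, it suffices to establish equality of the two coset representatives modulo $\im N$ computed on $\cG_\infty$.

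First I would recall from Proposition~\ref{prop:cup_infty} that the left-hand side is the coset of
$$
-\zeta(2m-1)\frac{(2m-2)!}{2}\frac{B_{2n+2m}}{4n+4m}\,\bw^{2n+2m-2}\otimes\aa^{2m-2}\pmod{\im N}.
$$
Next I would compute $\bZ_{2m-1}\cup\bG_{2n+2}^\infty$ in $H^2_\cD(\cG_\infty,S^{2n}H(2n+2m))$. Using the restricted Lie-algebra cocycle $Z^\infty_{2n+2}$ displayed in the proof of Proposition~\ref{prop:cup_infty}, together with the fact that $\bZ_{2m-1}$ is represented by a Deligne cocycle with $w'=z'=0$ and $c'=\zeta(2m-1)$, the $\cup_0$ formula collapses to $c_0=c'\cup z''=\zeta(2m-1)\,z''$. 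Converting the Betti entry via $d\lambda_B=d\lambda_\DR/2\pi i$ and $(2\pi i)^{2n}\bb^{2n}=\bw^{2n}$ then yields the coset of $-\zeta(2m-1)\frac{B_{2n+2}}{4n+4}\bw^{2n}\pmod{\im N}$; this also recovers the Eisenstein projection of Proposition~\ref{prop:external_cup}.

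I would then apply $D^{2m-2}$ and invoke Lemma~\ref{lemma:h_wt} with $k=2n$ and $\ell=2m-2$, giving
$$
D^{2m-2}(\bw^{2n}\otimes 1)\equiv \frac{2n+2m-1}{2n+1}\,\bw^{2n+2m-2}\otimes\aa^{2m-2}\pmod{\im N}.
$$
Multiplying by the stated scalar $\frac{(2m-2)!}{2}\frac{\binom{2n+2}{2}}{\binom{2n+2m}{2}}\frac{B_{2n+2m}}{B_{2n+2}}$ and using $\binom{2n+2}{2}=(n+1)(2n+1)$ and $\binom{2n+2m}{2}=(n+m)(2n+2m-1)$, the factors $(n+1)$, $(2n+1)$, $B_{2n+2}$ and $(2n+2m-1)$ all cancel, leaving exactly the coefficient of the left-hand side recalled above; this finishes the proof.

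The genuinely delicate part will be the bookkeeping of the Tate twists and powers of $2\pi i$: one must pass correctly between the Betti representative $\bb^{2n}$ and the de~Rham representative $\bw^{2n}$, track the twist $(2n+2m)$ against the shift $(2m-2)$ introduced by $D^{2m-2}$, and confirm that the surviving class indeed lands in the $\Q(2m-1)$ summand rather than a lower one (guaranteed by the weight-$2n$ computation in Proposition~\ref{prop:cup_infty}). The combinatorial identity underlying Lemma~\ref{lemma:h_wt} is the only nonformal input, and it has already been established; everything else is the routine cancellation indicated above.
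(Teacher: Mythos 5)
Your proposal is correct and follows essentially the same route as the paper's proof: both reduce to comparing the representative of $\bG^\infty_{2n+2m}\cup\bG^\infty_{2m}$ given by Proposition~\ref{prop:cup_infty} with a de~Rham representative of $\bZ_{2m-1}\cup\bG^\infty_{2n+2}$, then apply Lemma~\ref{lemma:h_wt} with $k=2n$, $\ell=2m-2$ and cancel the scalars exactly as you do. Your explicit $\cup_0$ recomputation of the restriction of $\bZ_{2m-1}\cup\bG_{2n+2}$ to $\cG_\infty$ merely fills in the step the paper covers by citing Proposition~\ref{prop:external_cup}, so there is no difference in substance.
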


\begin{proof}
Proposition~\ref{prop:external_cup} implies that
$\bZ_{2m-1}\cup\bG_{2n+2}^\infty$ is represented by the coset of
$$
- \zeta(2m-1) \,\frac{B_{2n+2}}{4n+4}\, \bw^{2n}\otimes 1
$$
in
$$
\Ext^1_\MHS\big(\Q,H^1(\cG_\infty,S^{2n}H(2n+2m))\big)
\cong \Ext^1_\MHS\big(\Q,S^{2n}H(2n+2m-1)/\im N\big)
$$
Proposition~\ref{prop:cup_infty} and Lemma~\ref{lemma:h_wt} imply that
$\bG_{2n+2m}^\infty\cup \bG_{2m}^\infty$ is represented by
$$
\frac{(2m-2)!}{2}\frac{\binom{2n+2}{2}}{\binom{2n+2m}{2}}
\frac{B_{2n+2m}}{B_{2n+2}}
\bigg[- \zeta(2m-1) \,\frac{B_{2n+2}}{4n+4}\, D^{2m-2}(\bw^{2n}\otimes 1)\bigg].
$$
\end{proof}

\appendix

\section{The Orbifold Case}

For applications to (say) modular curves, one needs to know that
Theorem~\ref{thm:vmhs} holds for orbifolds that are globally the quotient of a
smooth variety by a finite group. Here we sketch a proof that this is indeed the
case.

Suppose that $X$ is the orbifold quotient of a smooth connected variety $Y$ by a
finite group $\G$, which acts on $Y$ on the right. We write $X=Y\fss\G$. Note
that we are not assuming that $\G \to \Aut Y$ is injective.

To work on $X$ is to work with $\G$-equivariant objects on $Y$. For example, a
local system (resp.\ admissible VMHS) over $X$ is (by definition) a local system
(resp.\ admissible VMHS) over $Y$ with a right $\G$ action that commutes with
the projection to $X$.

The fundamental group $\pi_1(X,y_o)$, where $y_o\in Y$ is a ``geometric point''
of $X$, is the group whose elements are pairs $(\alpha,\gamma)$, where $\alpha :
[0,1]\to Y$ is a homotopy class of paths from $y_o$ to $y_o\gamma$.
Multiplication is
$$
(\alpha,\gamma)\circ(\beta,\mu) = (\alpha|_\mu\circ \beta,\gamma\mu),
$$
where $\circ$ denotes path composition (in the functional order) and
$\alpha|_\mu$ denotes the translate of $\alpha$ by $\mu$.\footnote{This
is equivalent to the usual definition of the orbifold fundamental group of $X$.
The description given here is the same as the one given in \cite{hain:elliptic},
except that it is modified for the change in the path multiplication
convention.} The projection $(\alpha,\gamma) \mapsto \gamma$ defines a 
homomorphism $\pi_1(X,y) \to \G$. There is an exact sequence
$$
1 \to \pi_1(Y,y_o) \to \pi_1(X,y_o) \to \G \to 1.
$$
If $\V$ is a local system over $X$, then one has a monodromy representation
$$
\pi_1(X,y_o) \to \Aut V_o
$$
where $V_o$ denotes the fiber over $y_o$. If $\H$ is a PVHS over $X$, then the
coordinate ring of the Zariski closure $R_o$ of the monodromy representation
$\pi_1(X,y_o) \to \Aut H_o$ has a Hodge structure of weight $0$.

The construction of the MHS on $\cO(\cG_X)$ is similar to that in the
non-orbifold case. First recall that the $C^\infty$ de~Rham complex of $X$ with
coefficients in a (real or complex) local system $\V$ over $X$ is defined to be
the complex of $\G$-invariant forms on $Y$ with values in $\V$:
$$
E^\dot(X,\V) := E^\dot(Y,\V)^\G.
$$
One uses the construction from \cite{hain:malcev} using the
complex
$$
E^\dot_\fin(X,\bO_o) := E^\dot_\fin(Y,\bO_o)^\G
$$
of $R_o$-finite vectors,\footnote{These are the elements of $E^\dot(Y,\bO_o)^\G$
that lie in a finite dimensional $R_o$-submodule.} where $\bO_o$ is the local
system over $X$ whose fiber over $y_o$ is $\cO(R_o)$ and whose monodromy
representation factors through right multiplication by $R_o$.

To show that the orbifold version of Theorem~\ref{thm:vmhs} follows from the
standard version, Theorem~\ref{thm:vmhs}, we use a standard trick. Fix a simply
connected smooth variety $W$ on which $\G$ acts fixed point freely.\footnote{For
example, one can take $W$ to be $(\C^N)^\G - \Delta$, where $\Delta$ is the fat
diagonal and $N>1$. This is $N-1$ connected. The group $\G$ acts by permuting
the coordinates.} Fix $w_o\in W$ and set $z_o=(y_o,w_o)$. The projection
$Y\times W \to Y$ induces an isomorphism on fundamental groups. Set
$$
Z = (Y\times W)/\G.
$$
This is a smooth variety. The projection $Y\times W \to Y$ induces an orbifold
map $(Z,z_o) \to (X,x_o)$ and an isomorphism $\pi_1(Z,z_o) \to \pi_1(X,y_o)$. So
the relative completion of $\pi_1(Z,z_o)$ is isomorphic to the relative
completion of $\pi_1(X,y_o)$. Their MHSs are also isomorphic as the projection
$$
E^\dot_\fin(Z,\bO_o) = E^\dot_\fin(Y\times W,\bO_o)^\G \to
E^\dot_\fin(Y,\bO_o)^\G = E^\dot_\fin(X,\bO_o)
$$
induced by the $\G$-invariant inclusion $Y\times w_o \hookrightarrow Y\times W$
is a morphism of mixed Hodge complexes which induces an isomorphism on homology
in degrees $<2$ and an injection in degree $2$.

Theorem~\ref{thm:vmhs} implies that $\HRep(\cG_X)$ is equivalent to
$\MHS(Z,\H)$. Variations of MHS over $Z$ are equivalent to $\G$-invariant
variations over $Y\times W$. Since $W$ is simply connected, variations of MHS
over $Z$ are constant when pulled back to any $W$-slice $y\times W \to Z$ and
are therefore determined by their pullback to the $Y$-slice $Y\times w_o$. In
this way, $\G$-invariant variations of MHS over $Y\times W$ correspond to
$\G$-invariant variations over $Y\cong Y\times w_o$.

\end{document}